\newcommand{\run}{\mathrm{I}}
\newcommand{\rdeux}{\mathrm{II}}
\newcommand{\rtrois}{\mathrm{III}}
\newcommand{\rquatre}{\mathrm{IV}}
\DeclarePairedDelimiterXPP{\normf}[2]{}{\|}{\|}{_{#1}}{#2}
\DeclarePairedDelimiterXPP{\normsup}[1]{}{\|}{\|}{_{\infty}}{#1}
\DeclarePairedDelimiterXPP{\normp}[1]{}{\|}{\|}{_{p}}{#1}
\DeclarePairedDelimiterXPP{\normpp}[1]{}{\|}{\|}{_{\frac{p}{2}}}{#1}
\DeclarePairedDelimiterXPP{\normhold}[2]{}{\|}{\|}{_{#1}}{#2}
\DeclarePairedDelimiterXPP{\generalnorm}[2]{}{\|}{\|}{_{#1}}{#2}
\DeclarePairedDelimiter{\eucnorm}{|}{|}
\newtheorem{proposition}{Proposition}
\newtheorem{theorem}{Theorem}
\newtheorem{lemma}{Lemma}
\newtheorem{corollary}{Corollary}
\theoremstyle{definition}
\newtheorem{definition}{Definition}
\newtheorem{notation}{Notation}
\newtheorem{hypothesis}{Hypothesis}
\theoremstyle{remark}
\newtheorem{remark}{Remark}
\newtheorem{example}{Example}
\newcommand{\vd}{\,\mathrm{d}}
\newcommand{\dd}{\mathrm{d}}
\newcommand{\RR}{\mathbb{R}}
\newcommand{\TT}{\mathbb{T}}
\newcommand{\intSS}{\mathbb{S}}
\newcommand{\NN}{\mathbb{N}}
\newcommand{\bx}{\mathbf{x}}
\newcommand{\uU}{\mathrm{U}}
\newcommand{\uW}{\mathrm{W}}
\newcommand{\uT}{\mathrm{T}}
\newcommand{\uV}{\mathrm{V}}
\newcommand{\uX}{\mathrm{X}}
\newcommand{\uL}{\mathrm{L}}
\newcommand{\cL}{\mathcal{L}}
\newcommand{\cC}{\mathcal{C}}
\newcommand{\cCb}{\mathcal{C}_{\mathrm{b}}}
\newcommand{\cK}{\mathcal{K}}
\newcommand{\cF}{\mathcal{F}}
\newcommand\given{\nonscript\:\delimsize\vert\nonscript\:\mathopen{}} 
\newcommand\SetSymbol[1][]{\nonscript\:#1\vert\nonscript\:\mathopen{}\allowbreak}
\DeclarePairedDelimiterX\Set[1]\{\}{%
  \renewcommand\given{\SetSymbol[\delimsize]}#1}
\DeclarePairedDelimiterX\Fam[1]\{\}{%
  \renewcommand\given{\SetSymbol[\delimsize]}#1}
\newcommand{\eqdef}{\mathbin{:=}}
\DeclarePairedDelimiterXPP{\normlip}[1]{}{\|}{\|}{_\mathrm{Lip}}{#1}
\newcommand{\rTT}{\TT^+}
\DeclarePairedDelimiter{\abs}{|}{|}
\NewDocumentCommand \ITVArgument
\newcommand{\itvOpenLeft}{(}
\newcommand{\itvOpenRight}{)}
\newcommand{\itvCloseLeft}{[}
\newcommand{\itvCloseRight}{]}
\DeclarePairedDelimiterX{\itvoo}[1]{\itvOpenLeft}{\itvOpenRight}{\ITVArgument{ITVFormatComma}{#1}}
\DeclarePairedDelimiterX{\itvoc}[1]{\itvOpenLeft}{\itvCloseRight}{\ITVArgument{ITVFormatComma}{#1}}
\DeclarePairedDelimiterX{\itvcc}[1]{\itvCloseLeft}{\itvCloseRight}{\ITVArgument{ITVFormatComma}{#1}}
\DeclarePairedDelimiterX{\itvco}[1]{\itvCloseLeft}{\itvOpenRight}{\ITVArgument{ITVFormatComma}{#1}}
\begin{document}

\title{The non-linear sewing lemma I: weak formulation}
\author{Antoine Brault\thanks{Université Paris Descartes, MAP5 (CNRS UMR 8145), 45 rue desSaints-Pères, 75270 Paris cedex 06, France.
\texttt{antoine.brault@parisdescartes.fr}}{\ }\thanks{Institut de Mathématiques de Toulouse, UMR 5219; Université de Toulouse, UPS IMT, F-31062 Toulouse Cedex 9, France.} \and Antoine Lejay\thanks{Université de Lorraine, CNRS, Inria, IECL, F-54000 Nancy, France, \texttt{antoine.lejay@univ-lorraine.fr}}}
\date{Feburary 24, 2019}
\maketitle

\begin{abstract}
We introduce a new framework to deal with rough differential equations based on
flows and their approximations.  Our main result is to prove that measurable
flows exist under weak conditions, even if  solutions to the corresponding
rough differential equations are not unique.
We show that under additional conditions of the approximation, there exists a unique Lipschitz flow. Then, a perturbation formula is given. Finally, we link our approach to the additive, multiplicative sewing lemmas and the rough Euler scheme.
\end{abstract}

\textbf{\textit{Keywords:}} rough paths; rough differential equations; non uniqueness of solutions; flow approximations; measurable flows; Lipschitz flows; sewing lemma.

\section{Introduction}
\subsection{Motivations}
The theory of rough paths allows one to define the solution to a differential 
equation of type 
\begin{equation}
    \label{eq:rde:3}
    y_t=a+\int_0^t f(y_s)\vd x_s,
\end{equation}
for a path $x$ which is irregular, say $\alpha$-Hölder continuous. 
Such an equation is then called a \emph{Rough Differential Equation} (RDE)
\cite{friz,lyons02b,friz14a}.
The key point
of this theory is to show that such a solution can be defined provided that~$x$
is extended to a path~$\bx$, called a \emph{rough path}, living in a larger space
that depends on the integer part of $1/\alpha$. When $\alpha>1/2$, no such extension
is needed. This case is referred as the \emph{Young case}, as the integrals 
are constructed in the sense given by L.C. Young \cite{young36a,lyons94a}.
Provided that one considers a rough path, integrals and differential equations are 
natural extensions of ordinary ones. 

The first proof of existence of a solution to \eqref{eq:rde:3}
from T.~Lyons relied on a fixed point \cite{lyons06b,lyons02b,lyons98a}.
It was quickly shown that RDE shares the same properties as ordinary differential 
equations, including the flow property.
In \cite{davie05a}, A.M. Davie gives an alternative proof based on 
an Euler type approximation as well as counter-example to uniqueness. 
More recently, I.~Bailleul gave a direct construction through the flow property
\cite{bailleul12a,bailleul14a,bailleul15a}. 

A \emph{flow} in a metric space  $\uV$
is a family $\Fam{\psi_{t,s}}_{0\leq s\leq t\leq T}$ of maps 
from $\uV$ to $\uV$
such that $\psi_{t,s}\circ\psi_{s,r}=\psi_{t,r}$ for any $0\leq r\leq s\leq t\leq T$. 
When $y_t(s,a)$ is a family of solutions to differential equations
with $y_s(s,a)=a$, the element $\psi_{t,s}(a)$ can be seen as a map which carries 
$a$ to $y_t(s,a)$. 
Flows are related to dynamical systems. They differ from solutions. 
One of their interest lies in their characterization
as lipeomorphims (Lipschitz functions with a Lipschitz inverse), diffeomorphisms...

In this article, we develop a generic framework to construct flows from approximations.
We do not focus on a particular form of the solutions, so that our construction
is a \emph{non-linear sewing lemma}, modelled after the additive and multiplicative
sewing lemmas developed in \cite{lyons98a,feyel,gub04,friz14a}. 

In this first part, we study flows under weak conditions
and prove existence of a measurable flow even when the solutions of RDE
are not necessarily unique. This is based on a selection theorem 
\cite{cardona_semiflow2} due to J.E.~Cardona and L.~Kapitanski.
Such a result is new in the literature where
existence of flows was only proved under stronger regularity conditions
(the many approaches are summarized in~\cite{coutin-lejay2}). Besides, 
our approach also contains the 
additive and multiplicative sewing lemmas~\cite{feyel,coutin}. 
The rough equivalent of the Duhamel formula for solving 
linear RDE \cite{coutin-lejay1} with a perturbative terms follows 
directly from our construction. 

In a second part \cite{brault2}, we provide conditions
for uniqueness and continuity. Besides, we show that our construction 
encompass many of the previous
approaches or results: A.M.~Davie \cite{davie05a}, 
I.~Bailleul \cite{bailleul12a,bailleul13b,bailleul14b} and P.~Friz \& N.~Victoir \cite{friz}.

Our starting point in the world of classical analysis is the 
\emph{product formula} which relates how the iterated product of an approximation of a flow
converge to the flow \cite{abraham,chorin}. It is important both from the theoretical 
and numerical point of view.

On a Banach space $\uV$, let us consider a family $(\epsilon,a)\in\RR_+\times\uV \mapsto \Phi(\epsilon,a)$, 
called an \emph{algorithm} of class~$\cC^1$ in $(\epsilon,a)$ such that $\Phi(0,a)=a$.
The parameter $\epsilon$ is related to the quality of the approximation.

The algorithm $\Phi$ is \emph{consistent} with a vector field $f$ when 
\begin{equation}
    \label{eq:fp:1}
    f(a)=\frac{\partial \Phi}{\partial\epsilon}(0,a) ,\ \forall a\in\uV.
\end{equation}
For a consider algorithm, when $\phi_t(a)$ is the solution to $\phi_t(a)=a+\int_0^t f(\phi_s(a))\vd s$, 
\begin{equation}
    \label{eq:fp:2}
    \underbrace{\Phi(t/n,\Phi(t/n,\dotsb\Phi(t/n,a)))}_{n\text{ times}}
    \text{ converges to }\phi_t(a)\text{ as }n\to\infty.
\end{equation}
Eq.~\eqref{eq:fp:2} is called the product formula.

The Euler scheme for solving ODE is the prototypical example of such behavior.
Set $\Phi(\epsilon,a):=a+f(a)\epsilon$ so that \eqref{eq:fp:1} holds.
In this case, \eqref{eq:fp:2} expresses the convergence of the Euler scheme. 

Using the product formula, one recovers easily the proof
of Lie's theorem on matrices: If $A$ is a matrix, 
then $\exp(tA)$ is the solution to $\dot{Y}_t=AY_t$ with $Y_0=\mathrm{Id}$ and then  
\begin{equation*}
    \exp(tA)=\lim_{n\to\infty} \left(\mathrm{Id}+\frac{t}{n}A\right)^n.
\end{equation*}

For two matrices $A$ and $B$, $\exp(t(A+B))$ is given by
\begin{equation*}
    \exp(t(A+B))=\lim_{n\to\infty}\left(\exp\left(\frac{tA}{n}\right)
    \exp\left(\frac{tB}{n}\right)\right)^n.
\end{equation*}
To prove the later statement, we consider $\Phi(\epsilon,a)=\exp(\epsilon A)\exp(\epsilon B)a$
and we verify that $\partial_\epsilon \Phi(0,a)=(A+B)a$ for any matrix $a$.

For unbounded operators, it is also related to Chernoff and Trotter's 
results on the approximation of semi-groups \cite{engel,MR3592993}. 
The product formula also justifies the construction of some splitting schemes
\cite{MR2494199}.

In this article, we consider as an algorithm a family $\Set{\phi_{t,s}}_{0\leq s\leq t\leq T}$
of functions from~$\uV$ to~$\uV$ which is close to the identity map in short
time and such that $\phi_{t,s}\circ\phi_{s,r}$ is close to $\phi_{t,r}$
for any time $s\leq r\leq t$. For a path $x$ of finite $p$-variation, $1\leq p<2$
with values in $\RR^d$
and a smooth enough function $f:\RR^m\to\uL(\RR^d,\RR^m)$, 
such an example is given by $\phi_{t,s}(a)=a+f(a)x_{s,t}$. 

We then study the behavior of 
the composition $\phi^\pi$ of the $\phi_{t_{i+1},t_i}$ along a
partition $\pi=\Set{t_i}_{i=0,\dotsc,n}$. Clearly, as the mesh
of the partition $\pi$ goes to $0$, the limit, when it exists 
is a candidate to be a flow. In the example given above, it will be the flow
associated to the family of Young differential equations 
$y_t(a)=a+\int_0^t f(y_s(a))\vd x_s$, which means according to A.M.~Davie \cite{davie05a} that 
\begin{equation}
    \label{eq:intro:1}
    \abs{y_t(a)-\phi_{t,s}(y_s(a))}\leq L(a)\varpi(\omega_{s,t}).
\end{equation}
We show that measurable flows may exist for Young or Rough Differential Equations 
even when several paths satisfying  \eqref{eq:intro:1} exist.

In \cite{brault2,brault3}, we exhibit a condition on almost flow 
that ensure existence of Lipschitz flows. Such an almost flow is called a \emph{stable almost flow}.
Besides, we study further
the connection between almost flows and solutions in the sense of~\eqref{eq:intro:1}. 
In particular, when an almost flow is stable, solutions exist and are unique. 
Stronger convergence rate of numerical approximations, as well as continuity results are then given.

In order to present our main results, we introduce 
some necessary notations as well as some central notions such as galaxies.

\subsection{Notations, definitions and concepts}
\label{sec:notations}

The following notations and hypotheses will be constantly used throughout all this article.

\subsubsection{Hölder and Lipschitz continuous functions}

Let $\uV$ and $\uW$ be two metric spaces.

The space of continuous functions from $\uV$ to $\uW$ 
is denoted by $\cC(\uV,\uW)$. 

Let $d$ (resp. $d'$) be a distance on $\uV$ (resp. $\uW$).
For $\gamma\in \itvoc{0,1}$, 
we say that a function $f:\uV\to\uW$ is $\gamma$-\emph{Hölder} if 
\begin{equation*}
    \normf{\gamma}{f}\eqdef\sup_{\substack{a,b\in\uV,\\a\neq b}} \frac{d'(f(a),f(b))}{d(a,b)^\gamma}<+\infty.
\end{equation*}
If $\gamma=1$ we say that $f$ is \emph{Lipschitz}. We then set 
$\normlip{f}\eqdef \normf{1}{f}$.

For any integer $r\geq 0$ and $\gamma\in \itvoc{0,1}$, we denote by
$\cC^{r+\gamma}(\uV,\uW)$ the subspace of functions from $\uV$ to $\uW$ whose
derivatives $\dd^k f$ of order $k\leq r$ are continuous and such that
$\dd^r f$ is $\gamma$-Hölder.

We denote by $\cCb^{r+\gamma}(\uV,\uW)$   the subset of 
$\cC^{r+\gamma}(\uV,\uW)$ of bounded functions with 
bounded derivatives up to order $r$.

\subsubsection{Controls and remainders}

From now, $\uV$ is a topological, complete metric space with a distance $d$.
A distinguished point of $\uV$ is denoted by $0$.

We fix $\gamma\in \itvoc{0,1}$. 
Let $N_\gamma:\uV\to \itvco{1,+\infty}$ be a $\gamma$-Hölder continuous function with constant~$\normhold{\gamma}{N}$.
The index $\gamma$ in $N_\gamma$  refers to its regularity. If $N_\gamma$ is Lipschitz
continuous ($\gamma=1$), then we simply write $N$.

Let us fix a \emph{time horizon} $T$. We write $\TT:=[0,T]$ as well as 
\begin{equation}
    \label{eq:TT:1}
    \rTT_2:=\Set{(s,t)\in\TT^2\given s\leq t}
    \text{ and }
    \rTT_3:=\Set{(r,s,t)\in\TT^3\given r\leq s\leq t}.
\end{equation}

A \emph{control} $\omega:\rTT_2\to\RR_+$ is a \emph{super-additive} family, \textit{i.e.},
\begin{equation*}
    \omega_{r,s}+\omega_{s,t}\leq \omega_{r,t},\ \forall (r,s,t)\in\rTT_3
\end{equation*}
with $\omega_{s,s}=0$ for all $s\in\TT$, and for any $\delta>0$, there exists $\epsilon>0$
such that $\omega_{s,t}<\delta$ whenever $0\leq s\leq t\leq s+\epsilon$.
A typical example of a control is $\omega_{s,t}=C\abs{t-s}$ for a constant $C\geq 0$.

For $p\geq 1$, we say that a path $x\in\cC(\TT,\uV)$ is
a \emph{path of finite $p$-variation controlled by $\omega$} if
\begin{align*}
\normp{x}\eqdef 
\sup_{\substack{(s,t)\in\rTT_2,\\s\neq t}} 
\frac{d(x_s,x_t)}{\omega_{s,t}^{1/p}}<+\infty.
\end{align*}

A \emph{remainder} is a function $\varpi:\RR_+\to \RR_+$  which is continuous,
increasing and such that 
for some $0<\varkappa<1$, 
\begin{equation}
    \label{eq:h4}
    2\varpi\left(\frac{\delta}{2}\right)\leq \varkappa\varpi(\delta),\ \forall \delta>0.
\end{equation}
A typical example for a remainder is $\varpi(\delta)=\delta^\theta$ for any $\theta>1$. 

We consider that $\delta:\RR_+\to \RR_+$ is a non-decreasing function with $\lim_{T\to 0} \delta_T=0$.

Finally, let $\eta: \RR_+\to \RR_+$ be a continuous, increasing function such that for all $(s,t)\in\rTT_2$,
\begin{equation}
\label{eq:h_eta}
\eta(\omega_{s,t})\varpi(\omega_{s,t})^\gamma\leq \delta_T\varpi(\omega_{s,t}).
\end{equation}

Partitions of $\TT$ are  customary denoted by $\pi=\Set{t_i}_{i=0,\dotsc,n}$.
The \emph{mesh} $\abs{\pi}$ of a partition $\pi$ is 
$\abs{\pi}:=\max_{i=0,\dotsc,n} (t_{i+1}-t_i)$.
The discrete simplices $\pi^+_2$ and $\pi^+_3$ are defined similarly to
$\rTT_2$ and $\rTT_3$ in~\eqref{eq:TT:1}.

\subsubsection{Galaxies}
\label{sec:galaxy}

\begin{notation}
    We denote by $\cF(\uV)$ the set of functions
    $\Fam{\phi_{t,s}}_{(s,t)\in\rTT_2}$ from $\uV$ to $\uV$
    which are continuous in $(s,t)$, i.e. for any 
    $a\in\uV$, the map $(s,t)\in\rTT_2\mapsto \phi_{t,s}(a)$ is continuous.
\end{notation}
\begin{notation}[Iterated products]
    For any $\phi\in\cF(\uV)$, any partition $\pi$ of $\TT$
    and any $(s,t)\in\rTT_2$, we write 
\begin{equation}
    \label{eq:20}
    \phi_{t,s}^\pi\eqdef \phi_{t,t_j}\circ \phi_{t_j,t_{j-1}}
    \circ\dotsb\circ \phi_{t_{i+1},t_i}\circ \phi_{t_i,s},
\end{equation}
where $\itvcc{t_i,t_j}$ is the biggest interval of such kind contained in $\itvcc{s,t}\subset\TT$
(possibly, $t_i=t_j$). If no such interval exists, then $\phi^\pi_{t,s}=\phi_{t,s}$. 
\end{notation}

Clearly, for any partition, $\phi^\pi\in\cF(\uV)$. A trivial but important remark is that 
from the very construction, 
\begin{equation}
    \label{eq:30}
    \phi_{t,r}^\pi=\phi_{t,s}^\pi\circ \phi^\pi_{s,r}\text{ for any }s\in\pi,\ 
    r\leq s\leq t.
\end{equation}
In particular, $\Fam{\phi^\pi_{t,s}}_{(s,t)\in\pi^2_+}$ enjoys a (semi-)flow 
property when the times are restricted to the elements of $\pi$.

The article is mainly devoted to study the possible limits of
$\phi^\pi$ as the mesh of $\pi$ decreases to $0$.

\begin{notation}
    \label{not:norms}
    From a distance $d$ on $\uV$,
    we define the distance~$\Delta_{N_\gamma}$ 
    on the space of functions from $\uV$ to $\uV$ by 
    \begin{equation*}
	\Delta_{N_\gamma}(f,g)\eqdef\sup_{a\in\uV}\frac{d(f(a),g(a))}{N_\gamma(a)},
    \end{equation*}
where $N_\gamma$ is defined in Section~\ref{sec:notations}.

    This distance is extended on $\cF(\uV)$ by 
    \begin{equation*}
	\Delta_{N_\gamma,\varpi}(\phi,\psi)\eqdef\sup_{\substack{(s,t)\in\rTT_2\\ s\not=t}}
\frac{\Delta_{N_\gamma}(\phi_{t,s},\psi_{t,s})}{\varpi(\omega_{s,t})},
    \end{equation*}
where $\omega$, $\varpi$ are defined in Section~\ref{sec:notations}. 
The distance $\Delta_{N_\gamma,\varpi}$ may take infinite values.
\end{notation}

If $d$ is a distance for which $(\uV,d)$ is complete, then 
$(\cC(\uV,\uV),\Delta_{N})$ and
$(\cF(\uV),\Delta_{N,\varpi})$ are complete.

\begin{definition}[Galaxy]
\label{def_galaxy}
    We define the equivalence relation $\sim$ on $\cF(\uV)$ by 
    $\phi\sim\psi$ if and only if there exists a constant $C$ such that 
    \begin{equation*}
    d(\phi_{t,s}(a),\psi_{t,s}(a))\leq CN_\gamma(a)\varpi(\omega_{s,t}),
    \ \forall a\in\uV,\ \forall (s,t)\in\rTT_2.    
    \end{equation*}
    In other words, $\phi\sim\psi$ if and only if $\Delta_{N_\gamma,\varpi}(\phi,\psi)<+\infty$. 
    Each quotient class of $\cF(\uV)/\sim$ is called a \emph{galaxy}, which 
    contains elements of $\cF(\uV)$ which are at finite distance from each others.
\end{definition}


\subsection{Summary of the main results}

\label{sec:main:results}

The galaxies partition the space $\cF(\uV)$. Each galaxy may
contain two classes of elements on which we focus on this article:
\begin{enumerate}[topsep=-\parskip,noitemsep,leftmargin=1.5em]
    \item The \emph{flows}, that is the families of maps $\psi:\uV\to\uV$ 
	which satisfy 
	\begin{equation}
	    \label{eq:sum:1}
	    \psi_{t,r}=\psi_{t,s}\circ \psi_{s,r}, \ \forall (r,s,t)\in\rTT_3, 
    \end{equation}
    or equivalently, $\psi^\pi=\psi$
    (See \eqref{eq:20}) for any partition $\pi$.
\item The \emph{almost flows} which we see as time-inhomogeneous
    algorithms. Besides some conditions on the continuity and the growth
    given in Definition~\ref{def:almost_flow} below, 
    an almost flow $\phi$ is close to a flow
    with the difference that 
    \begin{equation*}
	d(\phi_{t,s}\circ \phi_{s,r}(a),\phi_{t,r}(a))\leq N_\gamma(a)\varpi(\omega_{r,t}),
	\ \forall (r,s,t)\in\rTT_3,\ a\in\uV,
    \end{equation*}
    for a suitable function $N_\gamma:\uV\to\itvco{1,+\infty}$.
\end{enumerate}

Along with an almost flow $\phi$ comes the notion of \emph{manifold
of D-solutions}, that is a family $y:=\Set{y_{t}(a)}_{t\geq 0,\ a\in\uV}$
of paths such that
\begin{equation}
  \label{eq:intro:davie}
    d(y_t(a),\phi_{t,s}(y_s(a)))\leq C\varpi(\omega_{s,t}),\ \forall (s,t)\in\rTT_2.
\end{equation}
Each path $y(a)$ that satisfies \eqref{eq:intro:davie} is called a \emph{D-solution}.
This definition expands naturally the one introduced by A.M.~Davie in \cite{davie05a}. 

Clearly, a manifold of D-solutions associated to an almost flow $\phi$
is also associated to any almost flow in the same galaxy as $\phi$.
Besides, if a flow $\psi$ exists in the same galaxy as $\phi$, 
then $z_t(a)=\psi_{t,0}(a)$ defines a manifold of D-solutions~$z$.
Flows are more constrained objects than solutions as \eqref{eq:sum:1}
implies some compatibility conditions, while it is possible to create
new D-solutions by splicing two different ones.
As it will be shown in \cite{brault2}, uniqueness of manifold of D-solutions
is strongly related to existence of a flow. 

Given an almost flow $\phi$, it is natural to study the
limit of the net $\Set{\phi^\pi}_{\pi}$ as the mesh 
of $\pi$ decreases to $0$. Any limit will be a good candidate
to be a flow. 

Our first main result (Theorem~\ref{thm:1}) asserts that for any almost flow $\phi$ 
in a galaxy $G$, any iterated map $\phi^\pi$ belongs to $G$ whatever the partition $\pi$ although the map $\phi^\pi$
is not necessarily an almost flow. More precisely, one controls 
$\Delta_{N_\gamma,\varpi}(\phi^\pi,\phi)$ uniformly in the partition~$\pi$.

An immediate corollary is that any possible limit of $\Set{\phi^\pi}_{\pi}$ 
 as the mesh of the partition decreases to $0$ also belongs to $G$.

Our second main result (Theorem~\ref{thm:nls:1}) is that when the underlying Banach
space $\uV$ is finite-dimensional, 
there exists at least one measurable flow in a galaxy containing
an almost flow, even when several manifolds of D-solutions may exist. 
Our proof uses a recent result of J.E.~Cardona and L.~Kapitanski
\cite{cardona_semiflow2} on selection theorems.

Our third result is to give conditions ensuring the existence
of at most one flow in a galaxy. A sufficient condition is given for 
the galaxy $G$ contains a Lipschitz flow~$\psi$. In this case, 
 $\Set{\phi^\pi}_{\pi}$ converges to~$\psi$ whatever the
 almost flow $\phi$ in $G$. The rate of convergence is also quantified. 

Finally, we apply our results to the additive, multiplicative sewing 
lemmas \cite{feyel} as well as to the algorithms proposed by A.M. Davie
in \cite{davie05a} to show existence of measurable flows even without 
uniqueness. In the sequel \cite{brault2}, we study in details
the properties of Lipschitz flows and give some conditions
on almost Lipschitz flows to generate them. In addition, 
we also apply our results to other approximations of RDE, namely
the one proposed by P.~Friz \& N.~Victoir \cite{friz}
and the one proposed by I.~Bailleul \cite{bailleul12a} by using perturbation arguments.

\subsection{Outline}

We show in Section~\ref{sec:almost_flow} that a uniform control of the iterated
product of approximation of flows with respect to the subdivision.  In
Section~\ref{sec:weak_sewing_lemma}, we prove our main result: the existence of
a measurable flow under weak conditions of regularity.  Then, in
Section~\ref{sec:lipschitz} we show the existence and uniqueness of a Lipschitz
flow under stronger assumptions.  Moreover, we give a rate of the convergence
of the iterated product to the flow.  In Section~\ref{sec:perturbation}, we
show that additive perturbations preserve 
the convergence of iterated products of approximations of flows.
Finally, we recover
in Section~\ref{sec:applications} the additive \cite{lyons98a}, multiplicative
\cite{feyel,coutin-lejay1} and Davie's sewing lemmas~\cite{davie05a}.

\section{A uniform control over almost flows}
\label{sec:almost_flow}

In this section, we define almost flows which serve as approximations. 
The properties of an almost flow $\phi$ are weaker than the minimal condition
to obtain the convergence of the iterated product $\phi^\pi$ as the mesh of
the partitions $\pi$ decreases to $0$.  However, we prove in
Theorem~\ref{thm:1} that we can control $\phi^\pi$ uniformly over the
partitions~$\pi$.  This justifies our definition.

\subsection{Definition of almost flows}

\begin{definition}[Almost flow]
\label{def:almost_flow}
An element $\phi\in\cF(\uV)$ is an \emph{almost flow} if for 
any $(r,s,t)\in\rTT_3$, $a\in\uV$, 
\begin{gather}
    \label{eq:h0}
    \phi_{t,t}(a)=a,\\
    \label{eq:h1}
    d(\phi_{t,s}(a),a)\leq \delta_{T} N_\gamma(a),\\
    \label{eq:h2}
    d(\phi_{t,s}(a),\phi_{t,s}(b))\leq (1+\delta_{T})d(a,b)+\eta(\omega_{s,t})d(a,b)^\gamma,\\
    \label{eq:h3}
    d(\phi_{t,s}\circ\phi_{s,r}(a),\phi_{t,r}(a))\leq N_\gamma(a)\varpi(\omega_{r,t}).
\end{gather}
\end{definition}

\begin{remark}
    A family $\phi\in\cF(\uV)$ satisfying condition \eqref{eq:h2} with $\gamma=1$
    is said to be \emph{quasi-contractive}. This notion plays an important
    role in the fixed point theory \cite{berinde04}.
\end{remark}

\begin{definition}[Iterated almost flow]
    For a partition $\pi$ and an almost flow $\phi$, 
    we call $\phi^\pi$ an \emph{iterated almost flow}, where
    $\phi^\pi$ is the iterated product defined in \eqref{eq:20}.
\end{definition}

\begin{definition}[A flow] 
    A \emph{flow} $\psi$ is a family of functions $\Set{\psi_{t,s}}_{(s,t)\in\rTT_2}$ from $\uV$ to $\uV$ (not necessarily continuous in $(s,t)$) such that
    $\psi_{t,t}(a)=a$ and $\psi_{t,s}\circ\psi_{s,r}(a)=\psi_{t,r}(a)$ for any $a\in\uV$ and $(r,s,t)\in\rTT_3$. 
\end{definition}

In this paper, we consider almost flows which are continuous although flows may \textit{a priori} be discontinuous
(See Theorem~\ref{thm:nls:1}).


\subsection{A uniform control on iterated almost flows}

Before proving our main result in Section~\ref{sec:weak_sewing_lemma},
we prove an important uniform control over $\phi^\pi$.

\begin{theorem}
    \label{thm:1}
    Let $\phi$ be an almost flow. Then there exists
    a time horizon $T$ small enough and constants $L_T\geq 1$ as well 
    as $K_T\geq 1$ that decrease to $1$ as~$T$ decreases to $0$ such 
    that 
    \begin{align}
	d(\phi^\pi_{t,s}(a),\phi_{t,s}(a))&\leq L_TN_\gamma(a)\varpi(\omega_{s,t}),\\
	N_\gamma(\phi^\pi_{t,s}(a))&
	\leq K_T N_\gamma(a),\label{eq:thm1_KT} 
    \end{align}
    for any $(s,t)\in\TT_+^2$, $a\in\uV$ and any  partition $\pi$ of $\TT$.
    The choice of $T$, $L_T$ and $K_T$ depend only on $\delta$, $\varpi$, $\omega$, $\gamma$ and $\normhold{\gamma}{N_\gamma}$.
\end{theorem}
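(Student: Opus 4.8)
The natural approach is an induction on the number of partition points of $\pi$ lying strictly inside $(s,t)$, or — what amounts to the same thing — a "divide and conquer" argument exploiting the super-additivity of $\omega$ and the key scaling property \eqref{eq:h4} of $\varpi$. Fix $(s,t)\in\rTT_2^+$ and write $\pi\cap(s,t)=\{t_1<\dots<t_n\}$. If $n=0$ then $\phi^\pi_{t,s}=\phi_{t,s}$ and there is nothing to prove. For the inductive step, pick an index $j$ that splits the interval roughly in half in the sense of $\omega$: since $\omega_{s,t_j}$ is non-decreasing in $j$ and $\omega_{s,\cdot}+\omega_{\cdot,t}\le\omega_{s,t}$, one can choose $t_j$ so that both $\omega_{s,t_j}\le\tfrac12\omega_{s,t}$ and $\omega_{t_j,t}\le\tfrac12\omega_{s,t}$ (choose the largest $j$ with $\omega_{s,t_j}\le\tfrac12\omega_{s,t}$ and use super-additivity for the other bound). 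Write $\phi^\pi_{t,s}=\phi^\pi_{t,t_j}\circ\phi^\pi_{t_j,s}$ by \eqref{eq:30}, insert and remove $\phi_{t,t_j}\circ\phi_{t_j,s}$, and estimate the two halves.

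**Carrying out the estimate.** The error $d(\phi^\pi_{t,s}(a),\phi_{t,s}(a))$ is bounded, via the triangle inequality, by three terms: (i) $d(\phi^\pi_{t,t_j}\circ\phi^\pi_{t_j,s}(a),\phi_{t,t_j}\circ\phi^\pi_{t_j,s}(a))$, which by the induction hypothesis applied on $[t_j,t]$ at the point $b=\phi^\pi_{t_j,s}(a)$ is at most $L_T N_\gamma(b)\varpi(\omega_{t_j,t})$; (ii) $d(\phi_{t,t_j}\circ\phi^\pi_{t_j,s}(a),\phi_{t,t_j}\circ\phi_{t_j,s}(a))$, which by the Lipschitz-type bound \eqref{eq:h2} is controlled by $(1+\delta_T)$ times the induction hypothesis on $[s,t_j]$ plus an $\eta(\omega_{t_j,t})$-weighted $\gamma$-power of it; (iii) the genuine almost-flow defect $d(\phi_{t,t_j}\circ\phi_{t_j,s}(a),\phi_{t,s}(a))\le N_\gamma(a)\varpi(\omega_{s,t})$ from \eqref{eq:h3}. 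For term (i) I need to control $N_\gamma(b)=N_\gamma(\phi^\pi_{t_j,s}(a))$, which is exactly what the second inequality \eqref{eq:thm1_KT} provides by induction, giving $\le K_T N_\gamma(a)$; and I need $\eta(\omega_{t_j,t})\varpi(\cdot)^\gamma\le\delta_T\varpi(\cdot)$ from \eqref{eq:h_eta} to absorb the Hölder correction in (ii). Collecting terms and writing $v=\varpi(\omega_{s,t})$, $v_1=\varpi(\omega_{s,t_j})$, $v_2=\varpi(\omega_{t_j,t})$, one gets something of the shape
\begin{equation*}
  d(\phi^\pi_{t,s}(a),\phi_{t,s}(a))\le N_\gamma(a)\Bigl[L_T K_T v_2+(1+2\delta_T)L_T v_1+v\Bigr].
\end{equation*}
Since $v_1,v_2\le\varpi(\tfrac12\omega_{s,t})\le\tfrac{\varkappa}{2}v$ by \eqref{eq:h4} (monotonicity of $\varpi$), the bracket is at most $N_\gamma(a)v\bigl[\tfrac{\varkappa}{2}L_T(K_T+1+2\delta_T)+1\bigr]$, and the induction closes provided $T$ (hence $\delta_T$) is small enough that $\tfrac{\varkappa}{2}L_T(K_T+1+2\delta_T)+1\le L_T$, i.e. $L_T$ can be taken just slightly above $1/(1-\varkappa)$-ish while staying consistent; crucially $\varkappa<1$ is fixed and independent of $T$, so choosing $\delta_T$ small forces $L_T\to 1$.

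**The $N_\gamma$ bound and the interplay.** The second inequality is proved in the same induction: $N_\gamma(\phi^\pi_{t,s}(a))\le N_\gamma(\phi_{t,s}(a))+\normhold{\gamma}{N_\gamma}\,d(\phi^\pi_{t,s}(a),\phi_{t,s}(a))^\gamma$, then bound $N_\gamma(\phi_{t,s}(a))\le N_\gamma(a)+\normhold{\gamma}{N_\gamma}(\delta_T N_\gamma(a))^\gamma\le(1+\normhold{\gamma}{N_\gamma}\delta_T^\gamma)N_\gamma(a)$ using \eqref{eq:h1}, and bound the other term using the first inequality already proved in this step together with $\varpi(\omega_{s,t})^\gamma\le\delta_T\varpi(\omega_{s,t})/\eta(\cdots)$-type control, or more directly by noting $\varpi(\omega_{s,t})^\gamma$ is small uniformly for $T$ small. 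This yields $N_\gamma(\phi^\pi_{t,s}(a))\le K_T N_\gamma(a)$ with $K_T=1+\normhold{\gamma}{N_\gamma}\delta_T^\gamma+\normhold{\gamma}{N_\gamma}(L_T\sup\varpi(\omega_{\cdot}))^\gamma\cdot(\text{small})\to 1$.

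**Main obstacle.** The delicate point is the bookkeeping of the two constants $L_T$ and $K_T$: they appear coupled — the recursion for $L_T$ needs the bound $K_T$ on $N_\gamma$ along the iterated flow, while the bound for $K_T$ uses $L_T$. One must therefore set up the induction as a simultaneous statement "$(\mathrm{P}_n)$: both inequalities hold with the given constants for all $(s,t)$ whose interior contains $\le n$ partition points" and verify the fixed-point inequality for the pair $(L_T,K_T)$ simultaneously, checking that as $T\downarrow 0$ the system of inequalities $\tfrac{\varkappa}{2}L_T(K_T+1+2\delta_T)+1\le L_T$ and $K_T\ge 1+\normhold{\gamma}{N_\gamma}\delta_T^\gamma+(\text{term}\to 0)$ admits a solution with $L_T,K_T\to 1$. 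This is where the hypothesis $\varkappa<1$ is essential: it provides the contraction factor $\varkappa/2<1/2$ that makes the geometric-type recursion converge with a constant barely larger than $1$, uniformly, and it is also why one must pass to a possibly smaller time horizon $T$. A secondary technical care is the degenerate case where no "balanced" split index exists because $\omega$ may be flat on subintervals; there one simply takes $j=1$ or $j=n$ and recurses, using that $\omega_{s,t_1}$ (resp. $\omega_{t_n,t}$) may be zero — handled by continuity of $\varpi$ and the convention $\varpi(0)=0$ together with $\phi_{t,t}=\mathrm{id}$ from \eqref{eq:h0}.
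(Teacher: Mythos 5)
Your outline follows the paper's strategy (simultaneous induction on the pair $(L_T,K_T)$, the three-term triangle inequality, \eqref{eq:h_eta} to absorb the Hölder correction, \eqref{eq:h4} to produce the contraction factor), but the key splitting step contains a genuine error. You claim one can pick a single partition point $t_j$ with both $\omega_{s,t_j}\leq\tfrac12\omega_{s,t}$ and $\omega_{t_j,t}\leq\tfrac12\omega_{s,t}$, "using super-additivity for the other bound". Super-additivity only gives $\omega_{t_j,t}\leq\omega_{s,t}-\omega_{s,t_j}$, so the second bound would force $\omega_{s,t_j}\geq\tfrac12\omega_{s,t}$, which combined with the first requires exact equality $\omega_{s,t_j}=\tfrac12\omega_{s,t}$; for a discrete partition no such point exists in general. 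Concretely, take $\omega_{s,t}=t-s$ and a single interior point $u$ very close to $s$: the largest admissible index gives $\omega_{u,t}\approx\omega_{s,t}$, your bracket becomes $L_TK_T+\dotsb$ with the coefficient of $L_T$ at least $K_T\geq 1$, and the closing inequality $\tfrac{\varkappa}{2}L_T(K_T+1+2\delta_T)+1\leq L_T$ is unavailable. The induction therefore does not close, and no uniform-in-$\pi$ constant $L_T$ comes out. Your fallback ("take $j=1$ or $j=n$ and recurse") fails for the same reason: an unbalanced split keeps a coefficient $\geq 1$ in front of $L_T$, so the constant cannot propagate over an unbounded number of recursion levels.

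The paper (following Davie) repairs exactly this point by splitting at \emph{two successive} partition points $s<s'$: take $s'$ minimal with $\omega_{r,s'}\geq\tfrac12\omega_{r,t}$ and $s$ its predecessor; then $\omega_{r,s}\leq\tfrac12\omega_{r,t}$ and, by super-additivity applied to $(r,s',t)$, $\omega_{s',t}\leq\tfrac12\omega_{r,t}$, as in \eqref{eq:38}. The middle piece $\itvcc{s,s'}$ is a single step of the partition, on which $\phi^\pi$ coincides with $\phi$, so it contributes no term proportional to $L_T$ — only one extra application of the almost-flow inequality, namely $U_{s,t}(\phi^\pi_{s,r}(a))\leq U_{s',t}(\phi^\pi_{s',r}(a))+N_\gamma(\phi^\pi_{s,r}(a))\varpi(\omega_{s,t})$, which costs an additional $K_TN_\gamma(a)\varpi(\omega_{r,t})$. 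With this three-piece decomposition both outer terms gain the factor $\varkappa/2$ and the recursion closes with $L_T\geq\max\Set{1,(1+K_T)/(1-\alpha)}$, $\alpha=\tfrac{1+K_T+2\delta_T}{2}\varkappa<1$ for $T$ small. The remainder of your argument (the bound on $N_\gamma(\phi^\pi_{t,s}(a))$ and the coupled choice of $K_T$, $L_T$, and the extension from partition points to all $(s,t)$ by adjoining $s,t$ to $\pi$) is in line with the paper and would go through once the split is corrected.
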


\begin{remark} The distance $d$ may be replaced by a pseudo-distance
    in the statement of Theorem~\ref{thm:1}.
\end{remark}

The proof of Theorem~\ref{thm:1} is inspired by 
the one of the Claim in the proof of Lemma~2.4 in \cite{davie05a}. 
With respect to the one of A.M.~Davie, we consider obtaining 
a uniform control over a family of elements indexed by $(s,t)\in\rTT_2$
which are also parametrized by points in $\uV$. 

\begin{definition}[Successive points / distance between two points]
    Let $\pi$ be a partition of $\itvcc{0,T}$. Two points $s$ and $t$ of $\pi$
are said to be \emph{at distance $k$} if there are exactly $k-1$ points between them in $\pi$. 
We write $d_\pi(t,s)=k$. Points at distance~$1$ are called \emph{successive points} in $\pi$. 
\end{definition}
\begin{proof} 
    For $a\in\uV$, $(r,t)\in\rTT_2$ and a partition $\pi$, we set 
    \begin{equation*}
	U_{r,t}(a):=d(\phi^\pi_{t,r}(a),\phi_{t,r}(a)).
    \end{equation*}

    We now restrict ourselves to the case $(r,t)\in\pi^+_2$.
    To control $U_{r,t}(a)$ in a way that does not depend on $\pi$, 
    we use an induction in the distance between~$r$ and~$t$.

    Our induction hypothesis is that there exist constants $L_T\geq 0$ and $K_T\geq 1$  
    independent from the partition $\pi$ such that for any $(r,t)\in\pi^+_2$ at distance at most~$m\geq 1$, 
    \begin{align}
	\label{eq:induction:1}
	U_{r,t}(a)&\leq L_TN_\gamma(a)\varpi(\omega_{r,t}),\\
	\label{eq:induction:2}
	N_\gamma(\phi^\pi_{t,s}(a))&\leq K_T N_\gamma(a),
    \end{align}
with $K_T$ decreases to $1$ at $T$ goes to $0$.

    The induction hypothesis is true for $m=0$, since $U_{r,r}(a)=0$ and $N_\gamma(\phi^\pi_{r,r}(a))=N_\gamma(a)$.

    If $r$ and $t$ are successive points, $\phi^\pi_{t,r}=\phi_{t,r}$ so 
    that $U_{r,t}(a)=0$. Thus, \eqref{eq:induction:1} is true for $m=1$. 
    With \eqref{eq:h1} and since $N_\gamma(a)^\gamma\leq N_\gamma(a)$ as $N_\gamma(a)\geq 1$
    by hypothesis,
    \begin{multline*}
	N_\gamma(\phi_{t,s}(a))\leq N_\gamma(\phi_{t,s}(a))-N_\gamma(a)
	+N_\gamma(a)
	\leq \normhold{\gamma}{N_\gamma}d(\phi_{t,s}(a),a)^\gamma+N_\gamma(a)\\
	\leq \normhold{\gamma}{N_\gamma}\delta_T^\gamma N_\gamma(a)^\gamma+N_\gamma(a)
	\leq (\normhold{\gamma}{N_\gamma}\delta_T^\gamma+1)N_\gamma(a).
    \end{multline*}

    For $m=2$, it is also true with $L_T= 1$
    as $U_{r,t}(a)=d(\phi_{t,s}\circ\phi_{s,r}(a),\phi_{t,r}(a))$ where $s$
    is the point in the middle of $r$ and $t$. This proves \eqref{eq:induction:1}.

    In addition, using \eqref{eq:induction:1}, 
    \begin{multline}
	\label{eq:34}
	N_\gamma(\phi^\pi_{t,s}(a))\leq \normhold{\gamma}{N}d(\phi^\pi_{t,s}(a),\phi_{t,s}(a))^\gamma
	+N_\gamma(\phi_{t,s}(a))-N_\gamma(a)+N_\gamma(a)\\
	\leq K_T N_\gamma(a)
	\text{ with }K_T:=\normhold{\gamma}{N_\gamma}\varpi(\omega_{0,T})^\gamma
	+ \normhold{\gamma}{N_\gamma}\delta_T^\gamma+1.
    \end{multline}
   Clearly, $K_T$ decreases to $1$ at $T$ decreases to $0$.
    This proves \eqref{eq:induction:2} whenever \eqref{eq:induction:1}, 
    in particular for $m=1$. 

    Assume that \eqref{eq:induction:1}-\eqref{eq:induction:2} when the distance
    between~$r$ and~$t$ is smaller than $m$ for some $m\geq 2$. 

    For $s\in\pi$, $r\leq s\leq t$, with \eqref{eq:30}, 
    \begin{multline*}
	U_{r,t}(a)\leq d(\phi^\pi_{t,s}\circ\phi^\pi_{s,r}(a),\phi_{t,s}\circ\phi^\pi_{s,r}(a))\\
	+d(\phi_{t,s}\circ\phi^\pi_{s,r}(a),\phi_{t,s}\circ\phi_{s,r}(a))
	+d(\phi_{t,s}\circ\phi_{s,r}(a),\phi_{t,r}(a)).
    \end{multline*}
    With \eqref{eq:h2} and \eqref{eq:h3},
    \begin{equation}
	\label{eq:31}
	U_{r,t}(a)
	\leq
	U_{s,t}(\phi^\pi_{s,r}(a))
	+
	(1+\delta_T) U_{r,s}(a)+\eta(\omega_{s,t})U_{r,s}(a)^\gamma
	+
	N_\gamma(a)\varpi(\omega_{r,t}).
    \end{equation}
    Using \eqref{eq:31} on $U_{s,t}(\phi^\pi_{s,r}(a))$ 
    by replacing $(r,s,t)$ by $(s,s',t)$, where $s$ and $s'$ are two successive points of $\pi$
    leads to 
    \begin{equation*}
	U_{s,t}(\phi^\pi_{s,r}(a))
	\leq
	U_{s',t}(\phi^\pi_{s',r}(a))
	+
	 N_\gamma(\phi_{s,r}^\pi(a))\varpi(\omega_{s,t})
    \end{equation*}
    since $\phi_{s',s}\circ\phi^\pi_{s,r}(a)=\phi_{s',r}^\pi(a)$ and $U_{s,s'}(a)=0$.

    With \eqref{eq:induction:1}-\eqref{eq:induction:2} and our hypothesis on $\eta$, 
    again since $N_\gamma\geq 1$ and $s,s'$ are at distance less than $m$
    provided that $s$ is at distance at most $2$ from $t$, 
    \begin{equation}
	\label{eq:33}
	U_{r,t}(a)
	    \leq N_\gamma(a)\left(K_T L_T \varpi(\omega_{s',t})
	    +(L_T+\delta_T(1+L_T^\gamma))  \varpi(\omega_{r,s})
	    +(1+K_T) \varpi(\omega_{r,t})\right).
\end{equation}
This inequality also holds true for $r=s$ or $s'=t$.

We proceed as in \cite{davie05a} to split $\pi$ in \textquote{essentially}
    two parts. 
    We set 
    \begin{equation*}
	s'\eqdef\min\Set*{\tau\in\pi\given\omega_{r,\tau}\geq \frac{\omega_{r,t}}{2}}
	\text{ and }
	s\eqdef\max\Set*{\tau\in\pi\given \tau<s'}.
    \end{equation*}
    Hence, $s$ and $s'$ are successive points with $r\leq s<s'\leq t$
    and $\omega_{r,s}\leq \omega_{r,t}/2$.
    Besides, 
    since $\omega$ is super-additive, $\omega_{r,s'}+\omega_{s',t}\leq \omega_{r,t}$. 
    Therefore, 
    \begin{equation}
	\label{eq:38}
	\omega_{r,s}\leq \frac{\omega_{r,t}}{2}\text{ and }
	\omega_{s',t}\leq \frac{\omega_{r,t}}{2}.
    \end{equation}
    With such a choice, since $L_T\geq 1$, \eqref{eq:33} becomes with \eqref{eq:h4}:
    \begin{equation*}
	U_{r,t}(a)\leq \left(L_T\frac{K_T+1+2\delta_T}{2}\varkappa +1+K_T\right)
	N_\gamma(a)\varpi(\omega_{r,t}). 
    \end{equation*}
    If $T$ is small enough so that 
    \begin{equation*}
	\alpha:=\frac{1+K_T+2\delta_T}{2}\varkappa<1\text{ and }L_T\alpha+1+K_T\leq L_T, 
    \end{equation*}
    then one may choose $L_T$ so that $L_T\alpha+1+K_T\leq L_T$, that is $L_T\geq \max\Set{1,(1+K_T)/(1-\alpha)}$.
    This choice ensures that $U_{r,t}(a)\leq L_TN_\gamma(a)\varpi(\omega_{r,t})$
    when $r$ and $t$ are at distance $m$. Condition \eqref{eq:induction:2}
    follows from \eqref{eq:34} and \eqref{eq:induction:1}. 

    The choice of $T$, $K_T$ and $L_T$ does not depend on $\pi$. In particular, 
    $d(\phi^\pi_{t,s}(a),\phi_{t,s}(a))\leq L_TN_\gamma(a)\varpi(\omega_{s,t})$
    becomes true for any $(s,t)\in\rTT_2$ (it is sufficient to add 
    the points $s$ and $t$ to $\pi$). 
\end{proof}
\begin{corollary}
Let $\phi$ be an almost flow and $\pi$ be a partition of $\TT$. Then $\phi^\pi\sim\phi$
(we have not proved that $\phi^\pi$ is itself an almost flow).
\end{corollary}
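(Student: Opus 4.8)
The Corollary is an immediate consequence of Theorem~\ref{thm:1}. For $T$ chosen as in that statement, its first displayed inequality gives a finite constant $L_T$ (independent of the partition) such that
\[
  d(\phi^\pi_{t,s}(a),\phi_{t,s}(a))\leq L_T\,N_\gamma(a)\,\varpi(\omega_{s,t}),\qquad (s,t)\in\rTT_2,\ a\in\uV,
\]
for every partition $\pi$ of $\TT$. By Notation~\ref{not:norms} this is exactly the bound $\Delta_{N_\gamma,\varpi}(\phi^\pi,\phi)\leq L_T<+\infty$, which by Definition~\ref{def_galaxy} means $\phi^\pi\sim\phi$. One even obtains a bound on $\Delta_{N_\gamma,\varpi}(\phi^\pi,\phi)$ uniform in $\pi$, which is more than galaxy membership requires.

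If one wants the conclusion without the smallness restriction on $T$ built into Theorem~\ref{thm:1}, one can argue directly, at the price of a constant depending on $\pi$ — which is all the relation $\sim$ asks for. Fix $\pi$. By \eqref{eq:20}, $\phi^\pi_{t,s}$ is a composition of at most $\#\pi+1$ of the maps $\phi_{u,v}$, so it suffices to prove, by induction on the number $k$ of factors, that
\[
  d\bigl(\phi_{t,u_k}\circ\cdots\circ\phi_{u_1,s}(a),\ \phi_{t,s}(a)\bigr)\leq C_k\,N_\gamma(a)\,\varpi(\omega_{s,t})\qquad\text{for all }s<u_1<\cdots<u_k<t,
\]
with $C_1=1$ and $C_k\leq(1+2\delta_T)C_{k-1}+1$. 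The inductive step peels off the outermost map $\phi_{t,u_k}$: one bounds the error of the inner chain by the inductive hypothesis, transports it through $\phi_{t,u_k}$ using the quasi-contractivity estimate \eqref{eq:h2} (a factor $1+\delta_T$ together with a $\gamma$-Hölder remainder), absorbs that remainder via \eqref{eq:h_eta} and $N_\gamma\geq 1$, and controls $d(\phi_{t,u_k}\circ\phi_{u_k,s}(a),\phi_{t,s}(a))$ by \eqref{eq:h3}. Since the $C_k$ are finite, taking $k=\#\pi+1$ yields $\Delta_{N_\gamma,\varpi}(\phi^\pi,\phi)<+\infty$, hence $\phi^\pi\sim\phi$.

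There is no substantial obstacle in either route; the only point worth stressing is the one the statement itself flags. Belonging to the galaxy of $\phi$ does \emph{not} make $\phi^\pi$ an almost flow: its quasi-contractivity constant in \eqref{eq:h2} and its defect in \eqref{eq:h3} may genuinely be worse than those of $\phi$ (the estimates above bound them only in terms of $\delta_T$ and the size of $\pi$). What matters in the sequel is precisely the galaxy membership — it is insensitive to the choice of representative almost flow, and, since $(\cF(\uV),\Delta_{N_\gamma,\varpi})$ is complete, it forces any limit of the net $\Set{\phi^\pi}_\pi$ as $\abs{\pi}\to 0$ to lie again in that galaxy.
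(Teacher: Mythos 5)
Your first paragraph is exactly the paper's own argument: the corollary is stated as an immediate consequence of Theorem~\ref{thm:1}, since the uniform bound $d(\phi^\pi_{t,s}(a),\phi_{t,s}(a))\leq L_T N_\gamma(a)\varpi(\omega_{s,t})$ is precisely the statement $\Delta_{N_\gamma,\varpi}(\phi^\pi,\phi)\leq L_T<+\infty$, i.e.\ $\phi^\pi\sim\phi$ in the sense of Definition~\ref{def_galaxy}; the paper gives no separate proof beyond this. Your second, direct induction on the number of factors is a correct alternative (the recursion $C_k\leq(1+2\delta_T)C_{k-1}+1$ does follow from \eqref{eq:h2}, \eqref{eq:h3}, \eqref{eq:h_eta}, the monotonicity of $\eta$, $\varpi$ and super-additivity of $\omega$, together with $N_\gamma\geq 1$); it trades the smallness of $T$ built into Theorem~\ref{thm:1} for a constant depending on $\#\pi$, which is indeed all the equivalence $\sim$ requires, whereas the paper's route buys the $\pi$-uniform bound that is actually needed later for convergence of the net $\Set{\phi^\pi}_\pi$. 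Your closing remark about $\phi^\pi$ not being shown to be an almost flow, and limits staying in the galaxy, matches the paper's own comments.
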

\begin{notation}
\label{not:Sphi}
    For an almost flow $\phi$, let us denote by $S_\phi(s,a)$ 
    the set of  all the possible limits of the net $\Set{\phi^\pi_{\cdot,s}(a)}_{\pi}$
    in $(\cC([s,T],\uV),\normsup{\cdot})$ for nested partitions. 
\end{notation}

When $\uV$ is a finite dimensional space with the norm 
$\abs{\cdot}$, $S_\phi(s,a)\ne\emptyset$. We start with a lemma 
which will be useful to prove some equi-continuity.
We denote $\bar{B}(0,R)$ the closed ball centered in $0$ of radius
$R> 0$.

\begin{lemma}
\label{lem:unif:cont}
Let $R>0$. We assume that $\uV$ is a finite dimensional vector space and $\phi$ is an almost flow.  Then, for all $\epsilon>0$ there exists $\delta>0$ such that
for all $(s,t),(s',t')\in\rTT_2$ with $|t-t'|,|s-s'|<\delta$ and $a\in \bar{B}(0,R)$,
\begin{align*}
\abs{\phi_{t,s}(a)-\phi_{t',s'}(a)}\leq \epsilon.
\end{align*}
\end{lemma}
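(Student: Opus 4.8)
The plan is to prove the joint continuity of $(s,t)\mapsto\phi_{t,s}(a)$ uniformly in $a$ on the compact ball $\bar B(0,R)$, by reducing it to the separate continuity assumption built into the definition of $\cF(\uV)$ together with the quantitative estimates \eqref{eq:h1} and \eqref{eq:h2} from Definition~\ref{def:almost_flow}. First I would fix $R>0$ and $\epsilon>0$. The key point is that $\phi_{t',s'}(a)$ differs from $\phi_{t,s}(a)$ by a term we can control: writing $b\eqdef\phi_{s',s}(a)$ (assuming for the moment $s\le s'\le t'\le t$; the other orderings are handled the same way after swapping roles), we decompose
\begin{equation*}
\abs{\phi_{t,s}(a)-\phi_{t',s'}(a)}\leq \abs{\phi_{t,s}(a)-\phi_{t',s}(a)}+\abs{\phi_{t',s}(a)-\phi_{t',s'}(a)}.
\end{equation*}
The first term is handled by separate continuity at fixed base point, but we need it uniform in $a$; the second term is where \eqref{eq:h2} enters.

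For the second term I would use that $\phi_{t',s}(a)=\phi_{t',s'}\circ\phi_{s',s}(a)$ only holds for flows, not almost flows, so instead I would go through the approximate multiplicativity \eqref{eq:h3}: $\abs{\phi_{t',s'}\circ\phi_{s',s}(a)-\phi_{t',s}(a)}\leq N_\gamma(a)\varpi(\omega_{s,t'})$, and on $\bar B(0,R)$ we have $N_\gamma(a)\leq R\normhold{\gamma}{N_\gamma}+N_\gamma(0)\eqdef C_R$. Then $\abs{\phi_{t',s'}(\phi_{s',s}(a))-\phi_{t',s'}(a)}$ is controlled by \eqref{eq:h2} applied to the pair $(a,\phi_{s',s}(a))$: it is at most $(1+\delta_T)\abs{\phi_{s',s}(a)-a}+\eta(\omega_{s',t'})\abs{\phi_{s',s}(a)-a}^\gamma$, and by \eqref{eq:h1}, $\abs{\phi_{s',s}(a)-a}\leq\delta_{|s'-s|}N_\gamma(a)\leq C_R\delta_{|s'-s|}$ which tends to $0$ as $|s'-s|\to0$ since $\delta$ is non-decreasing with $\lim_{T\to 0}\delta_T=0$. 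Combined with the remainder term $N_\gamma(a)\varpi(\omega_{s,t'})\leq C_R\varpi(\omega_{s,t'})$, which tends to $0$ as $t'-s\to 0$ (hence when both $|t-t'|$ and $|s-s'|$ are small, since then $t'-s\leq (t'-t)+(t-s)$... wait, this needs $t-s$ small too — so actually I would first reduce to the case where $|t-s|$ is bounded, not necessarily small). I would therefore split into two regimes: when $t-s\leq\rho$ for a small $\rho$, use \eqref{eq:h1} directly to bound $\abs{\phi_{t,s}(a)-\phi_{t',s'}(a)}\leq\abs{\phi_{t,s}(a)-a}+\abs{a-\phi_{t',s'}(a)}\leq 2C_R\delta_\rho<\epsilon$; when $t-s>\rho$, the interval is non-degenerate and we can exploit continuity.

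For the first term $\abs{\phi_{t,s}(a)-\phi_{t',s}(a)}$ (fixed base point $s$, moving endpoint), and more generally on the regime $t-s>\rho$, the remaining obstacle is upgrading the pointwise-in-$a$ separate continuity to uniformity over $\bar B(0,R)$. The hard part here is that the definition of $\cF(\uV)$ only gives continuity of $(s,t)\mapsto\phi_{t,s}(a)$ for each fixed $a$, i.e. a pointwise family of continuous functions on the compact $\rTT_2$; I need an equicontinuity-type argument. My plan is to invoke compactness: the map $(a,s,t)\mapsto\phi_{t,s}(a)$ restricted to $\bar B(0,R)\times\rTT_2$ — I claim this map is continuous, because \eqref{eq:h2} gives equicontinuity in $a$ uniformly in $(s,t)$ (the modulus $(1+\delta_T)|a-a'|+\eta(\omega_{0,T})|a-a'|^\gamma$ does not depend on $(s,t)$), and combined with continuity in $(s,t)$ for each fixed $a$, a standard argument (continuity in each variable separately plus uniform equicontinuity in one of them implies joint continuity) shows the map is jointly continuous on the compact set $\bar B(0,R)\times\rTT_2$. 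A continuous function on a compact metric space is uniformly continuous, which is exactly the claim. This compactness/joint-continuity step is the crux; the rest is bookkeeping with \eqref{eq:h1}–\eqref{eq:h3}. I would present the equicontinuity-implies-joint-continuity argument explicitly since it is the substantive point, then conclude by uniform continuity on the compact product.
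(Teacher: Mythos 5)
Your crux argument is correct and is essentially the paper's proof: the uniform-in-$(s,t)$ equicontinuity in $a$ supplied by \eqref{eq:h2}, the (uniform) continuity of $(s,t)\mapsto\phi_{t,s}(a)$ on the compact $\rTT_2$ for each fixed $a$, and compactness of $\bar{B}(0,R)$ in finite dimension — the paper packages this as an $\epsilon/3$ estimate with a finite subcover of $\bar{B}(0,R)$, while you package it as joint continuity on the compact product followed by uniform continuity, which is the same standard argument. The preliminary decomposition via \eqref{eq:h1} and \eqref{eq:h3} and the two-regime split in $t-s$ are superfluous (and were the only shaky part of your plan): the compactness/joint-continuity step alone already yields the lemma.
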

\begin{proof}
In all the proof $(s,t),(s',t')\in\rTT_2$.
For any $a\in \bar{B}(0,R)$, $(s,t)\in\rTT_2\mapsto \phi_{t,s}(a)$ is continuous, so  uniformly continuous on the compact $\rTT_2$. Let $\epsilon>0$, there is $\delta_a$ such that for all $|t-t'|,|s-s'|<\delta_a$,
\begin{align}
\label{eq:lem:unif1}
\abs{\phi_{t,s}(a)-\phi_{t',s'}(a)}\leq \frac{\epsilon}{3}.
\end{align}
For a $\rho>0$ with \eqref{eq:h2}, for all $b\in B(a,\rho)$,
\begin{align*}
\abs{\phi_{t,s}(a)-\phi_{t,s}(b)}\leq (1+\delta_T)\rho+\eta(\omega_{0,T})
\rho^\gamma,
\end{align*}
where $B(a,\rho)$ denotes the open ball centred in $a$ of radius $\rho$.
We choose $\rho(\epsilon)>0$  such that $(1+\delta_T)\rho(\epsilon)+\eta(\omega_{0,T})
\rho(\epsilon)^\gamma\leq \epsilon/3$ to obtain for all $b\in B(a,\rho(\epsilon))$,
\begin{align}
\label{eq:lem:unif2}
\abs{\phi_{t,s}(a)-\phi_{t,s}(b)}\leq \frac{\epsilon}{3}.
\end{align}

 We note that $\bigcup_{a\in \bar{B}(0,R)}B(a,\rho(\epsilon))$ is a covering of $\bar{B}(0,R)$ which is a compact of $\uV$. There exist an integer $N$ and a finite family of balls $B(a_i,\rho(\epsilon))$ for $i\in\{1,\dots,N\}$ such that
$\bar{B}(0,R)\subset\bigcup_{i\in\{1,\dots,N\}}B(a_i,\rho(\epsilon))$.

It follows that for all $b\in\bar{B}(0,R)$ there exists $i\in\{1,\dots,N\}$ such that $b\in B(a_i,\rho(\epsilon))$. From \eqref{eq:lem:unif1}-\eqref{eq:lem:unif2} there exists $\delta_{a_i}>0$ such that for all  $|t-t'|,|s-s'|<\delta_{a_i}$,
\begin{align*}
\abs{\phi_{t,s}(b)-\phi_{t',s'}(b)}\leq \epsilon.
\end{align*}
Taking $\delta:=\min_{i\in\{1,\dots,N\}}\delta_{a_i}$, we obtain that
for all $|t-t'|,|s-s'|<\delta$ and $b\in\bar{B}(0,R)$,
\begin{align*}
\abs{\phi_{t,s}(b)-\phi_{t',s'}(b)}\leq \epsilon,
\end{align*}
which achieves the proof.
\end{proof}

\begin{proposition}
    \label{prop:sel:1}
    Assume that $\uV$ is finite-dimensional and that $\phi$ is an almost flow.
    Then $S_\phi(r,a)$ is not empty for any $(r,a)\in\TT\times\uV$.
\end{proposition}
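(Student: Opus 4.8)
The plan is to exhibit, for a fixed $(r,a)$, a sequence of nested partitions $\pi_n$ of $\TT$ along which $\phi^{\pi_n}_{\cdot,r}(a)$ converges uniformly on $[r,T]$, and then appeal to the Arzelà--Ascoli theorem. The key point is that Theorem~\ref{thm:1} (together with \eqref{eq:thm1_KT}) already supplies the needed \emph{uniform} bounds, so the argument only needs to upgrade these into equicontinuity and uniform boundedness of the family $\Set{\phi^\pi_{\cdot,r}(a)}_\pi$ in $\cC([r,T],\uV)$. Since $\uV$ is finite-dimensional, closed bounded sets are compact, and Arzelà--Ascoli applies verbatim.

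First I would fix $(r,a)$ and work with partitions $\pi\ni r$. From Theorem~\ref{thm:1} we get, for every $(s,t)\in\rTT_2$ and every $\pi$,
\begin{equation*}
    \abs{\phi^\pi_{t,s}(a)}\leq \abs{\phi_{t,s}(a)-a}+\abs{a}
    \leq \delta_T N_\gamma(a)+\abs{a}+L_T N_\gamma(a)\varpi(\omega_{0,T}),
\end{equation*}
where I have combined \eqref{eq:h1} with the first estimate of Theorem~\ref{thm:1}; applying this with $s=r$ shows that all the paths $t\mapsto\phi^\pi_{t,r}(a)$ lie in a fixed closed ball $\bar B(0,R)$ with $R$ depending only on $a$, $\delta$, $\varpi$, $\omega$, $\gamma$, $\normhold{\gamma}{N_\gamma}$. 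Moreover \eqref{eq:thm1_KT} gives $N_\gamma(\phi^\pi_{u,r}(a))\leq K_T N_\gamma(a)$ uniformly in $\pi$ and $u$, which is the estimate that will feed the equicontinuity step. Next, for $r\leq u\leq u'\leq T$ with both $u,u'$ in $\pi$ (the general case is handled by inserting $u,u'$ into $\pi$, which only enlarges the partition and, by \eqref{eq:30}, does not change $\phi^\pi_{u,r}$ and $\phi^\pi_{u',r}$ up to composing with maps close to the identity), I write $\phi^\pi_{u',r}(a)=\phi^\pi_{u',u}(\phi^\pi_{u,r}(a))$ and estimate
\begin{equation*}
    \abs{\phi^\pi_{u',r}(a)-\phi^\pi_{u,r}(a)}
    \leq \abs{\phi^\pi_{u',u}(b)-\phi_{u',u}(b)}+\abs{\phi_{u',u}(b)-b}
    \leq L_T N_\gamma(b)\varpi(\omega_{u,u'})+\delta_T N_\gamma(b),
\end{equation*}
with $b=\phi^\pi_{u,r}(a)$, by Theorem~\ref{thm:1} and \eqref{eq:h1}. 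Here the term $\delta_T N_\gamma(b)$ does not go to $0$ as $u'-u\to 0$, so a direct application of \eqref{eq:h1} is too crude; instead I would split $[u,u']$ along the partition points and telescope, using \eqref{eq:h3} at each successive pair to turn the accumulated error into $\sum \varpi(\omega_{u_i,u_{i+1}})$ plus a single $\phi_{u',u}$-type term, and then use super-additivity of $\omega$ together with $2\varpi(\delta/2)\leq\varkappa\varpi(\delta)$ to bound the sum by a constant times $\varpi(\omega_{u,u'})$. Combined with Lemma~\ref{lem:unif:cont} applied to the single map $\phi_{u',u}$ on the ball $\bar B(0,R')$ containing all the intermediate points $\phi^\pi_{u_i,r}(a)$, this yields a modulus of continuity $t\mapsto C\varpi(\omega_{\cdot,\cdot})+o(1)$ that is uniform in $\pi$; since $\varpi$ is continuous with $\varpi(0)=0$ and $\omega$ is a control, this is genuine equicontinuity.

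With uniform boundedness and uniform equicontinuity of $\Set{\phi^\pi_{\cdot,r}(a)}_\pi$ in hand, Arzelà--Ascoli gives relative compactness in $(\cC([r,T],\uV),\normsup{\cdot})$; extracting a uniformly convergent subsequence along any decreasing sequence of meshes (which can be arranged to be nested by successively refining) produces an element of $S_\phi(r,a)$, so $S_\phi(r,a)\neq\emptyset$. \textbf{The main obstacle} I anticipate is exactly the equicontinuity estimate: the naive bound from \eqref{eq:h1} contributes a term of size $\delta_T N_\gamma$ that is not small on short intervals, so one genuinely has to exploit the telescoping/super-additivity structure (as in the proof of Theorem~\ref{thm:1}) to see that the composition of many ``nearly identity'' maps over a short time interval stays close to the identity, with the error controlled by $\varpi(\omega_{u,u'})$ rather than by the length of $[u,u']$ times a constant. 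Everything else is a routine compactness argument.
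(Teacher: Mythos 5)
Your strategy is essentially the paper's proof: uniform boundedness and the estimate $N_\gamma(\phi^\pi_{\cdot,r}(a))\leq K_T N_\gamma(a)$ come from Theorem~\ref{thm:1}, equicontinuity comes from combining that uniform estimate with Lemma~\ref{lem:unif:cont} applied near the diagonal (this, rather than any re-telescoping via \eqref{eq:h3}, is exactly how the paper removes the non-vanishing $\delta_T N_\gamma$ term from \eqref{eq:h1}), and the Ascoli--Arzelà theorem in finite dimension concludes. The paper organizes the equicontinuity step with an increasing sequence of nested partitions and a two-case analysis (the times $s,t$ belonging to $\pi_n$ or lying between successive points of it, plus a density argument) instead of your point-insertion remark, but the ingredients and the conclusion are the same.
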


\begin{proof}
    Let us show for an almost flow $\phi$, 
    $\Set{\phi^\pi_{\cdot,r}(a)}_{\pi}$ is equi-continuous and bounded. The result is 
    then a direct consequence of the Ascoli-Arzelà theorem.

Let $(\pi_n)_{n\in\NN}$ be an increasing sequence of partitions of $\TT$ such that
$|\pi_n|\rightarrow 0$ when $n\rightarrow +\infty$ and $\bigcup_{n\in\NN} \pi_n$ dense in $\TT$.
Let $R>0$ and $R':=N_\gamma(R)(L_T\varpi(\omega_{0,T})+\delta_T)+R$. From Theorem~\ref{thm:1}, for any $a\in\bar{B}(0,R)$ and $(s,t)\in \TT_2^+$,
\begin{multline*}
\abs{\phi^{\pi_n}_{t,s}(a)}\leq \abs{\phi_{t,s}^{\pi_n}(a)-\phi_{t,s}(a)}+
\abs{\phi_{t,s}(a)-a}+\abs{a}\\
\leq  N_\gamma(a)(L_T\varpi(\omega_{0,T})+\delta_T)+\abs{a}
\leq R'.
\end{multline*}     
Let $(r,s,t)\in \rTT_3$ and $s,t\in\bigcup_{n\in\NN}\pi_n$, let
$\epsilon>0$ and $\delta>0$ given by Lemma~\ref{lem:unif:cont} for~$\epsilon/2$. For $|t-s|<\delta$, let $m$ an integer such that $s,t\in\pi_m$.

We differentiate two cases. If $m\leq n$, then $\pi_m\subset\pi_n$, which implies that $\phi_{t,r}^{\pi_n}=\phi_{t,s}^{\pi_n}\circ\phi_{s,r}^{\pi_n}$. From Theorem~\ref{thm:1} and Lemma~\ref{lem:unif:cont}, for all $a\in\bar{B}(0,R)$, for all $|t-s|<\delta$
\begin{equation*}
\abs{\phi_{t,r}^{\pi_n}(a)-\phi_{s,r}^{\pi_n}(a)}\leq
\abs{\phi_{t,s}^{\pi_n}\circ\phi^{\pi_n}_{s,r}(a)-\phi^{\pi_n}_{s,r}(a)}
\leq \frac{\epsilon}{2}+L_TN_\gamma(\phi_{s,r}^{\pi_n}(a))\varpi(\omega_{s,t}).
\end{equation*}
Since, $\omega_{s,s}=0$ and $\omega$ is continuous close to diagonal,
there exists $\delta'>0$ such that for all $|t-s|<\delta'$,
 $L_TN_\gamma(\phi_{s,r}^{\pi_n}(a))\varpi(\omega_{s,t})<\epsilon/2$. Thus for all
 $|t-s|<\min{(\delta,\delta')}$, $\abs{\phi_{t,r}^{\pi_n}(a)-\phi_{s,r}^{\pi_n}(a)}\leq \epsilon$.
 
 In the case $m>n$, let $s_{-},s_{+}$ be successive points in $\pi_n$ such that $[s,t]\subset [s_{-},s_{+}]$. Then, 
 $\phi_{t,r}^{\pi_n}(a)=\phi_{t,s_{-}}\circ\phi_{s_{-},r}^{\pi_n}(a)$
 and $\phi_{s,r}^{\pi_n}(a)=\phi_{s,s_{-}}\circ\phi_{s_{-},r}^{\pi_n}(a)$.
According to Lemma~\ref{lem:unif:cont}, for $|t-s|<\delta$ with $s,t\in\pi_m$, and all $a\in\bar{B}(0,R)$,
\begin{align}
\abs{\phi_{t,r}^{\pi_n}(a)-\phi_{s,r}^{\pi_n}(a)}=\abs{\phi_{t,s_{-}}\circ\phi_{s_{-},r}^{\pi_n}(a)-\phi_{s,s_{-}}\circ\phi_{s_{-},r}^{\pi_n}(a)}
\leq \epsilon.\label{eq:unif_equi_cont}
\end{align}
By continuity of $t\mapsto\phi_{t,r}^{\pi_n}(a)$, and density of $\bigcup_{m\in\NN}\pi_m$ in $\TT$, we obtain \eqref{eq:unif_equi_cont} for all $(s,t)\in\rTT_2$ with $r\leq s$ and $\abs{t-s}<\delta$.
This proves that $\Set{t\mapsto \phi_{t,r}^{\pi_n}(a)}_n$ is uniformly equi-continuous for all $a\in\uV$.
We conclude the proof with the Ascoli-Arzelà theorem.
\end{proof}


\section{The non-linear sewing lemma}
\label{sec:weak_sewing_lemma}

We now show that in the finite dimensional case, we can build a flow from $\phi^\pi$
using a selection principle~\cite{cardona_semiflow2}.
In this section, we consider that almost flows $\phi$ are defined for $\TT:=\itvco{0,+\infty}$.

\begin{definition}[Solution in the sense of Davie or D-solution]
    \label{def:davie}
    For an almost flow $\phi$, a time $r\in\TT$ and a point $a\in\uV$, 
    a   \emph{solution in the sense of Davie} (or a D-solution) 
    is a path $y\in\cC(\intSS,\uV)$ with $\intSS=[r,r+T]\subset\TT$  such that
    \begin{equation}
    \label{eq:def_davie}
    d(y_t,\phi_{t,s}(y_s))\leq KN_{\gamma}(a)\varpi(\omega_{s,t}),\ \forall (s,t)\in\intSS_+^2,
  \end{equation}
  where $K\geq 0$ is a constant.
\end{definition}

\begin{remark}
Our definition of D-solutions extends the one of Davie in \cite{davie05a}
to a metric space $\uV$ and a general almost flow $\phi$.
\end{remark}

\begin{remark}
When $\phi$ is only an almost flow, it is not guaranteed that 
a D-solution exists or is unique. 
When $S_{\phi}(r,a)\ne\emptyset$ (see Notation~\ref{not:Sphi}), we prove below in Lemma~\ref{lem:sel:2}
that a D-solution exists.
However, even if for all $(r,a)\in \TT\times\uV$,
$S_\phi(r,a)\ne\emptyset$
this does not imply the existence of families of solutions  $\Set{\psi_{\cdot,r}(a)}_{r\in\TT,a\in\uV}$ which satisfies
the flow property.
\end{remark}

\begin{notation}
We denote $\Omega(r,a)$ the set of continuous paths such that  $y\in\cC(\intSS,\uV)$ verifies $y_r=a$.
We denote by $G^K_\phi(r,a)$ the set of paths
in $\Omega(r,a)$ verifying \eqref{eq:def_davie} for the constant $K$.
\end{notation}

\begin{definition}[Splicing of paths]
    \label{def:splicing}
    For $r\leq s$, let us consider $y(r,a)\in\Omega(r,a)$ and
    $z(s,b)\in\Omega(s,b)$ with $b=y_s(r,a)$.
    Their \emph{splicing} is 
    \begin{equation*}
	( y\bowtie_s z)_t(r,a)=\begin{cases}
	    y_t(r,a)&\text{ if }t\leq s,\\
	    z_t(s,y_s(r,a))&\text{ if }t\geq s.
	\end{cases}
    \end{equation*}
\end{definition}
We restate here, the definition of a family of abstract  integral local funnels (Definition~$2$ in \cite{cardona_semiflow2}), which leads to the existence of a measurable flow.

\begin{definition}
\label{def:local_funnel}
A family $F(r,a)$ with $r\in \itvco{0,+\infty}$, $a\in\uV$, will be called a 
\emph{family of abstract local integral funnels} with \emph{terminal time} $T(r,a)\in\itvoo{0,+\infty}$
if 
\begin{description}
    \item[H0] The map $(r,a)\in \itvco{0,+\infty}\times\uV\mapsto T(r,a)$ is  lower semi-continuous in the sense that if $(r_n,a_n)\rightarrow (r,a)$, then 
$T(r,a)\leq\liminf_n T(r_n,a_n)$.
\item[H1] Every set $F(r,a)$ is a non-empty compact in the space $\cC(\itvcc{r,{r+T(r,a)}},\uV)$ and 
every path $y(r,a)\in F(r,a)$ is a continuous map from $\itvcc{r,{r+T(r,a)}}$ to~$\uV$ with $y_r(r,a)=a$.
\item[H2] For all $r\geq 0$, the map $a\in\uV\mapsto F(r,a)$ is measurable in the sense that for any closed subset $\cK\subset\cC(\itvcc{0,1},\uV)$,
$\Set{a\in\uV\given\tilde{F}(r,a)\bigcap\cK\ne\emptyset}$ is Borel, where $\tilde{F}(r,a)$ is the set of re-parametrizations of paths of $F(r,a)$ on $\itvcc{0,1}$.
\item[H3] If $y(r,a)\in F(r,a)$ and $\tau< T(r,a)$, then
  $T(r+\tau,y_{r+\tau}(r,a))\geq T(r,a)-\tau$ and $t\in
  [r+\tau,r+\tau+T(r+\tau,y_{r+\tau}(r,a))]\mapsto y_t(r,a)$ belongs
  to $F(r+\tau,y_{r+\tau}(r,a))$.
\item[H4] If $y(r,a)\in F(r,a)$ and $\tau< T(r,a)$, and $z\in
  F(r+\tau,y_{t+\tau}(r,a))$ then the spliced path
  (Definition~\ref{def:splicing}) $x:=y\bowtie_{r+\tau} z$ belongs to  $F(r,a)$.
\end{description}
\end{definition}

\begin{definition}[Lipschitz almost flow]
    \label{def:lipschitz:almost:flow}
    A \emph{Lipschitz almost flow} is an almost flow for
    which \eqref{eq:h2} is satisfied with $\eta=0$, and $N_\gamma=N_1=N$
    is Lipschitz continuous.
\end{definition}
A flow is constructed by assigning to each point of the space a particular D-solution,
in a sense which is compatible.

\begin{hypothesis}
\label{hypo:varpi}
Let $\uV$ be a finite dimensional vector space.
Let $\phi:=\Set{\phi_{t,s}}_{0\leq s\leq t<+\infty}$
be a Lipschitz almost flow (Definition~\ref{def:lipschitz:almost:flow}) with $N$ bounded. 
We fix a time horizon $T>0$ such that $\varkappa(1+\delta_T)<1$.
\end{hypothesis}

\begin{remark}
When $N$ bounded, we can choose $K_T=1$ and $L_T= 2/(1-\varkappa(1+\delta_T))$ 
where $K_T$ and $L_T$ are the constants of Theorem~\ref{thm:1}.
\end{remark}

The main theorem of this paper is the following one. 

\begin{theorem}[Non-linear sewing lemma, weak formulation]
    \label{thm:nls:1}
Under Hypothesis~\ref{hypo:varpi}, there exists 
    $\psi\in\cF(\uV)$ in the same galaxy as $\phi$ satisfying the flow property 
    and such that $\psi_{t,s}$ is Borel
    measurable for any $(s,t)\in\TT_+^2$. 
\end{theorem}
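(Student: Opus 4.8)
The plan is to verify the hypotheses \textbf{H0}--\textbf{H4} of the Cardona--Kapitanski selection theorem (Definition~\ref{def:local_funnel}) for a suitable family of funnels built from D-solutions, and then invoke that theorem to extract a measurable flow, finally checking that the flow so obtained lies in the same galaxy as $\phi$. Concretely, I would fix the time horizon $T$ from Hypothesis~\ref{hypo:varpi} and the constant $K:=L_T$ coming from Theorem~\ref{thm:1} (with $K_T=1$ since $N$ is bounded), and define the candidate funnel by
\begin{equation*}
    F(r,a)\eqdef G^K_\phi(r,a)=\Set*{y\in\Omega(r,a)\given d(y_t,\phi_{t,s}(y_s))\leq K N(a)\varpi(\omega_{s,t}),\ \forall (s,t)\in\intSS^2_+},
\end{equation*}
on the interval $\intSS=[r,r+T]$, with constant terminal time $T(r,a)\equiv T$. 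First I would show $F(r,a)\neq\emptyset$: by Proposition~\ref{prop:sel:1}, $S_\phi(r,a)\neq\emptyset$ since $\uV$ is finite-dimensional, and Lemma~\ref{lem:sel:2} (referenced in the text) shows that any element of $S_\phi(r,a)$ is a D-solution with a constant controlled by $L_T$; so $F(r,a)$ contains at least one path. Next, compactness of $F(r,a)$ in $\cC(\intSS,\uV)$ follows from Ascoli--Arzelà: the defining inequalities together with Lemma~\ref{lem:unif:cont} (or the boundedness/equicontinuity estimates already used in the proof of Proposition~\ref{prop:sel:1}) give a uniform modulus of continuity and a uniform bound, and the set is closed because the defining inequalities pass to uniform limits. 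This handles \textbf{H0} (trivially, $T$ constant hence lower semicontinuous) and \textbf{H1}.

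For \textbf{H3} and \textbf{H4} I would argue directly from the definition of D-solution. If $y\in F(r,a)$ and $\tau<T$, then the restriction of $y$ to $[r+\tau,r+\tau+T']$ still satisfies the Davie inequality on that subinterval, with the point $a$ replaced by $b=y_{r+\tau}$; here I need to take care of the fact that the constant in \eqref{eq:def_davie} is normalized by $N(a)$ rather than $N(b)$, which is where the boundedness of $N$ (Hypothesis~\ref{hypo:varpi}) and the estimate \eqref{eq:thm1_KT} with $K_T=1$ are used to absorb the change of base point into the constant $K$ — this is the main bookkeeping point and I would present it carefully, possibly after enlarging $K$ once and for all to a value that is stable under this operation (using $\sup_a N(a)<\infty$ and $N(a)\geq 1$). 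The splicing property \textbf{H4} is then immediate from Definition~\ref{def:splicing}: on $[r,r+\tau]$ the spliced path is $y$, on $[r+\tau,\cdot]$ it is $z$, and for a pair $(s,t)$ straddling $r+\tau$ one uses the triangle inequality, the flow-type estimate \eqref{eq:h3} for $\phi$, super-additivity of $\omega$, and the remainder inequality \eqref{eq:h4} to recombine the two Davie bounds into one with the same constant $K$ — this is exactly the computation that motivates the definition of a remainder.

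The measurability requirement \textbf{H2} is the delicate one: I must show that $a\mapsto F(r,a)$ is a measurable set-valued map, i.e. that $\Set{a\given \tilde F(r,a)\cap\cK\neq\emptyset}$ is Borel for every closed $\cK\subset\cC([0,1],\uV)$. The standard route is to exhibit $F(r,\cdot)$ (after rescaling to $[0,1]$) as a closed-valued map with a Borel-measurable graph, or to write it as an intersection of a countable family of ``obviously measurable'' conditions. Here the graph $\Set{(a,y)\given y\in\tilde F(r,a)}$ is closed in $\uV\times\cC([0,1],\uV)$ (the reparametrized Davie inequalities and the constraint $y_0=a$ are closed conditions, jointly continuous in $(a,y)$), and compact-valued maps with closed graph are upper semicontinuous, hence measurable; alternatively one invokes the projection theorem / a measurable selection theorem for closed-graph correspondences. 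I expect this verification of \textbf{H2} to be the main obstacle, since it requires translating the closed-graph property into the precise measurability notion used by Cardona--Kapitanski and being careful about the reparametrization onto $[0,1]$. Once \textbf{H0}--\textbf{H4} are in place, the Cardona--Kapitanski theorem \cite{cardona_semiflow2} yields a selection $(r,a)\mapsto \psi_{\cdot,r}(a)\in F(r,a)$ that is measurable in $a$ and satisfies the semiflow identity $\psi_{t,s}\circ\psi_{s,r}=\psi_{t,r}$; gluing over the successive intervals of length $T$ extends it to all of $\TT=\itvco{0,+\infty}$, giving a flow $\psi\in\cF(\uV)$ with each $\psi_{t,s}$ Borel. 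Finally, since $\psi_{\cdot,r}(a)\in F(r,a)=G^K_\phi(r,a)$, the Davie inequality with $s=r$ gives $d(\psi_{t,r}(a),\phi_{t,r}(a))\leq KN(a)\varpi(\omega_{r,t})$, which together with $\sup N<\infty$ and Definition~\ref{def_galaxy} shows $\psi\sim\phi$, i.e. $\psi$ lies in the same galaxy as $\phi$. This completes the proof.
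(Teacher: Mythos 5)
Your overall skeleton is exactly the paper's: take $F(r,a):=G^{L_T}_\phi(r,a)$ with constant terminal time $T$, verify \textbf{H0}--\textbf{H4}, apply the Cardona--Kapitanski selection theorem, and read off the galaxy membership from the Davie inequality at $s=r$. Your treatments of \textbf{H0}, \textbf{H1} and \textbf{H3} match Lemmas~\ref{lem:sel:T}, \ref{lem:existence_solution} and \ref{lem:sel:4}, and your closed-graph route to \textbf{H2} is a harmless variant of Lemma~\ref{lem:G_closed} (which shows directly, by Ascoli--Arzel\`a along a sequence $a_k\to a$, that $\Set{a\given\tilde G^{L_T}_\phi(r,a)\cap\cK\neq\emptyset}$ is closed). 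But there is a genuine gap at \textbf{H4}, which you dismiss as ``immediate'': for a pair $(\tau,t)$ straddling the splice point $s=r+\tau'$, the triangle inequality together with \eqref{eq:h2}--\eqref{eq:h3} gives $d(x_t,\phi_{t,\tau}(x_\tau))\leq \normsup{N}\bigl(K\varpi(\omega_{s,t})+(1+\delta_T)K\varpi(\omega_{\tau,s})+\varpi(\omega_{\tau,t})\bigr)\leq \normsup{N}\bigl((2+\delta_T)K+1\bigr)\varpi(\omega_{\tau,t})$, and \eqref{eq:h4} cannot absorb this back to the constant $K$: the splice point $s$ is imposed, not chosen to bisect $\omega_{\tau,t}$, and even in the balanced case the requirement $(2+\delta_T)K+1\leq K$ (or $\varkappa(2+\delta_T)K/2+1\leq K$ only after a bisection you are not entitled to) shows the naive recombination does not close. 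Nor does your fallback of ``enlarging $K$ once and for all'' work here: \textbf{H4} demands that the spliced path lie in the \emph{same} family $F(r,a)$, and splicing can be iterated, so a map sending $G^{K}$ into $G^{cK}$ with $c>1$ is useless — the constant must be exactly preserved.

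This is precisely why the paper's Lemma~\ref{lem:sel:5} is its longest verification. There, the crude straddling bound with the enlarged constant $\normsup{N}(2+\delta_T)L_T$ is only the starting point; one then establishes the functional inequality $U_{\tau,t}\leq U_{u,t}+(1+\delta_T)U_{\tau,u}+\normsup{N}\varpi(\omega_{\tau,t})$ and runs a Davie-type induction over the points of a partition, measured against $\varpi^\lambda$ for $\lambda<1$ (so that $\varkappa_\lambda=2^{1-\lambda}\varkappa^\lambda<1$), choosing successive points $s,s'$ with $\omega_{\tau,s}\leq\omega_{\tau,t}/2$ and $\omega_{s',t}\leq\omega_{\tau,t}/2$ as in Theorem~\ref{thm:1}. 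Letting the mesh go to $0$ kills the term carrying the enlarged constant (it enters only through $D_{L_T}(\pi,\lambda)\to0$), and then letting $\lambda\uparrow1$ with $T(\lambda)\uparrow T$ yields $U_{\tau,t}\leq\normsup{N}L_T\varpi(\omega_{\tau,t})$, where the constant closes up exactly because of the specific choice $L_T=2/(1-\varkappa(1+\delta_T))$ permitted by $K_T=1$ and the standing condition $\varkappa(1+\delta_T)<1$ in Hypothesis~\ref{hypo:varpi}. Without this argument (or an equivalent one), the family $G^{L_T}_\phi$ has not been shown stable under splicing, and the selection theorem cannot be invoked; supplying it is the essential missing step in your proposal.
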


\begin{remark}
Proving such a result with a general Banach space $\uV$ is false
as even existence of solutions to ordinary differential equations may fail
\cite{dieudonne,hajek}.
\end{remark}

\begin{remark}
To prove Theorem~\ref{thm:nls:1}, we show that $(G^{L_T}(r,a))_{r\in
\itvco{0,+\infty},a\in\uV}$ is a family of abstract local integral funnels in the
sense of Definition~\ref{def:local_funnel}.  Then, we use Theorem~$2$ of
\cite{cardona_semiflow2}.
\end{remark}

Lemmas~\ref{lem:sel:T}-\ref{lem:sel:5} prove that $G^{L_T}(r,a)$ is a family of
local abstract funnels in the sense of the Definition~2 in
\cite{cardona_semiflow2}. Then we apply Theorem~2 in \cite{cardona_semiflow2}
to obtain the above theorem.

\begin{lemma}
\label{lem:sel:T}
Under  Hypothesis~\ref{hypo:varpi}, the terminal time $T(r,a):=T$ is
independent of the starting time $r$ and the starting point $a$.  
In particular, \textup{H0} holds for $F=G^{L_T}_\phi$.
\end{lemma}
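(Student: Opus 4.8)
The plan is to verify the two conditions in Hypothesis~\ref{hypo:varpi} concerning the terminal time, which amounts to a mostly bookkeeping argument. Since we are asked to show \textup{H0} for the constant terminal time function $T(r,a)\equiv T$, and since $T$ has been fixed once and for all (via the condition $\varkappa(1+\delta_T)<1$), the map $(r,a)\mapsto T(r,a)$ is \emph{constant}, hence continuous, hence trivially lower semi-continuous: if $(r_n,a_n)\to(r,a)$ then $\liminf_n T(r_n,a_n)=T=T(r,a)$, so in particular $T(r,a)\leq\liminf_n T(r_n,a_n)$. First I would state this observation.

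Second, I would address the implicit claim that $T$ can indeed be taken independent of $(r,a)$, i.e.\ that the construction of D-solutions and of the funnel $G^{L_T}_\phi(r,a)$ does not force a shrinking of the time horizon depending on the base point. This is where Theorem~\ref{thm:1} and the remark following Hypothesis~\ref{hypo:varpi} come in: because $N$ is bounded, the constants $K_T$ and $L_T$ furnished by Theorem~\ref{thm:1} (namely $K_T=1$ and $L_T=2/(1-\varkappa(1+\delta_T))$) depend only on $\delta$, $\varpi$, $\omega$, $\gamma$ and $\normhold{\gamma}{N_\gamma}$, and crucially \emph{not} on the starting time $r$ nor the starting point $a$. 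The almost flow $\phi$ is by Hypothesis~\ref{hypo:varpi} defined on all of $\itvco{0,+\infty}$, so the restriction of $\phi$ to any subinterval $\itvcc{r,r+T}\subset\TT$ is again an almost flow with the \emph{same} structural data $(\delta,\varpi,\omega,\gamma,\normhold{\gamma}{N_\gamma})$. Hence the uniform controls of Theorem~\ref{thm:1}, and with them the whole machinery defining $G^{L_T}_\phi(r,a)$, transfer verbatim from the interval $\itvcc{0,T}$ to the interval $\itvcc{r,r+T}$. Therefore the same $T$ works for every $(r,a)$, which is exactly the assertion.

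Concretely I would write: fix $T>0$ with $\varkappa(1+\delta_T)<1$ as in Hypothesis~\ref{hypo:varpi}. For any $r\in\itvco{0,+\infty}$ and $a\in\uV$, applying Theorem~\ref{thm:1} on the shifted interval $\itvcc{r,r+T}$ (which is licit since $\phi\in\cF(\uV)$ is an almost flow on $\itvco{0,+\infty}$ and the defining inequalities \eqref{eq:h0}--\eqref{eq:h3} are invariant under the choice of base interval) yields constants $L_T=2/(1-\varkappa(1+\delta_T))\geq 1$ and $K_T=1$ independent of $r$ and $a$. These are precisely the constants entering the definition of $G^{L_T}_\phi(r,a)$, so the terminal time may be taken to be $T(r,a):=T$ uniformly. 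Being constant in $(r,a)$, the map $(r,a)\mapsto T(r,a)$ is lower semi-continuous, which is \textup{H0} for $F=G^{L_T}_\phi$.

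I do not expect any genuine obstacle here: the lemma is essentially a \emph{consistency} statement, recording that nothing in the earlier construction depends on translating the time origin or moving the initial condition, and that the one global smallness condition on $T$ is enough. The only mild subtlety worth spelling out is the invariance of Definition~\ref{def:almost_flow} under replacing $\itvcc{0,T}$ by $\itvcc{r,r+T}$ — but this is immediate because $\delta_T$, $\varpi$, $\omega$ and $\eta$ are all defined on $\RR_+$ through the \emph{length} of the interval (via $\omega_{s,t}$ and the remainder/$\delta$ functions), not through the absolute position of the endpoints. I would flag that observation explicitly and otherwise keep the proof to a few lines.
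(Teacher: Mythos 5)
Your proposal is correct and follows essentially the same route as the paper: the paper's proof is the one-line observation that the constants ($\varkappa$, $\delta_T$, and hence the constants of Theorem~\ref{thm:1}) do not depend on $r$ or $a$, so $T(r,a):=T$ is constant and \textup{H0} holds trivially. Your additional remarks on the shift-invariance of the almost-flow conditions and the uniformity of $K_T$, $L_T$ merely spell out what the paper leaves implicit.
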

\begin{proof}
It is sufficient to notice that the constants $\varkappa$ and $\delta_T$ do not depend on  $a\in\uV$ neither
on $r$.
\end{proof}

We recall that $S_\phi(r,a)$ is defined in  Notation~\ref{not:Sphi}.
Our first result is that when $S_\phi(r,a)\neq\emptyset$, then there exists
at least one D-solution in $G^{L_T}_\phi(r,a)$.  

\begin{lemma}
    \label{lem:sel:2}
    Assume that $K\geq K_TL_T$ in Definition~\ref{def:davie}, where $K_T$ and $L_T$ are constants in Theorem~\ref{thm:1}.
    For any $(r,a)\in\TT\times\uV$, $S_\phi(r,a)\subset G^K_\phi(r,a)$ for an almost flow $\phi$
    (note that $S_\phi(r,a)$ may be empty).
\end{lemma}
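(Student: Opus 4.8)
The plan is to take an arbitrary element $y \in S_\phi(r,a)$ and show it satisfies the D-solution inequality \eqref{eq:def_davie} with constant $K \geq K_T L_T$. By the very definition of $S_\phi(r,a)$ (Notation~\ref{not:Sphi}), $y$ is a limit in $(\cC([r,T],\uV),\normsup{\cdot})$ of $\phi^{\pi_n}_{\cdot,r}(a)$ along some sequence of nested partitions $(\pi_n)_n$ with $|\pi_n| \to 0$. In particular $y_r = \lim_n \phi^{\pi_n}_{r,r}(a) = a$ by \eqref{eq:h0}, so $y \in \Omega(r,a)$; it remains to verify the Davie bound.

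First I would fix $(s,t) \in \intSS_+^2$ and estimate $d\bigl(\phi^{\pi_n}_{t,r}(a),\, \phi_{t,s}(\phi^{\pi_n}_{s,r}(a))\bigr)$. The natural move is to insert the point $s$: enlarging each $\pi_n$ by the two points $s,t$ (which only makes $|\pi_n|$ smaller, and the limit is unchanged since these points are eventually present — or one argues by density and continuity as in the proof of Proposition~\ref{prop:sel:1}), one has the semiflow identity $\phi^{\pi_n}_{t,r} = \phi^{\pi_n}_{t,s} \circ \phi^{\pi_n}_{s,r}$ from \eqref{eq:30}. Hence
\begin{equation*}
d\bigl(\phi^{\pi_n}_{t,r}(a), \phi_{t,s}(\phi^{\pi_n}_{s,r}(a))\bigr)
= d\bigl(\phi^{\pi_n}_{t,s}(\phi^{\pi_n}_{s,r}(a)), \phi_{t,s}(\phi^{\pi_n}_{s,r}(a))\bigr)
\leq L_T\, N_\gamma(\phi^{\pi_n}_{s,r}(a))\, \varpi(\omega_{s,t}),
\end{equation*}
where the inequality is the first estimate of Theorem~\ref{thm:1} applied at base point $\phi^{\pi_n}_{s,r}(a)$. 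Then \eqref{eq:thm1_KT} of Theorem~\ref{thm:1} gives $N_\gamma(\phi^{\pi_n}_{s,r}(a)) \leq K_T N_\gamma(a)$, so the right-hand side is bounded by $K_T L_T\, N_\gamma(a)\, \varpi(\omega_{s,t})$, uniformly in $n$.

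Finally I would pass to the limit $n \to \infty$. Since $\phi^{\pi_n}_{s,r}(a) \to y_s$ and $\phi^{\pi_n}_{t,r}(a) \to y_t$, and since $\phi_{t,s}$ is continuous (the maps in $\cF(\uV)$ are continuous in $(s,t)$, hence in particular each $\phi_{t,s}$ is a continuous map of the space variable — this is part of the standing assumptions, cf.~\eqref{eq:h2}), the left-hand side converges to $d(y_t, \phi_{t,s}(y_s))$, while the right-hand side is constant in $n$. Therefore $d(y_t, \phi_{t,s}(y_s)) \leq K_T L_T\, N_\gamma(a)\, \varpi(\omega_{s,t}) \leq K N_\gamma(a)\varpi(\omega_{s,t})$, which is exactly \eqref{eq:def_davie}. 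Hence $y \in G^K_\phi(r,a)$. The only mildly delicate point is the bookkeeping for inserting $s$ and $t$ into the partitions while keeping the limit $y$: this is handled either by noting that a nested sequence whose union is dense eventually contains any prescribed finite set of points, or by the density-plus-continuity argument already used in Proposition~\ref{prop:sel:1}; no genuine obstacle arises.
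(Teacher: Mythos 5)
Your main estimate and the limit passage coincide with the paper's proof: the semiflow identity \eqref{eq:30}, the two bounds of Theorem~\ref{thm:1} (giving the factor $K_TL_T$), and continuity of $\phi$ in the space variable via \eqref{eq:h2} to pass to the limit. The genuine weak point is the first mechanism you propose for handling $s,t\notin\pi_n$. A nested sequence of partitions whose union is dense need not ever contain a prescribed point (dyadic partitions never contain $1/3$), and replacing $\pi_n$ by $\pi_n\cup\{s,t\}$ changes the iterated maps: \eqref{eq:30} is only valid when the intermediate point belongs to the partition, and the assumed convergence $\phi^{\pi_n}_{\cdot,r}(a)\to y$ says nothing about $\phi^{\pi_n\cup\{s,t\}}_{\cdot,r}(a)$, whose subsequential limits (they exist by Proposition~\ref{prop:sel:1}) may a priori be \emph{other} elements of $S_\phi(r,a)$. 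As written, that route would show that \emph{some} D-solution exists, not that your given $y$ satisfies \eqref{eq:def_davie}.

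Your fallback (``density plus continuity'') is the correct route and is exactly what the paper does. Keep the partitions $\pi_k$ along which $\phi^{\pi_k}_{\cdot,r}(a)\to y$, and choose $s_k\in\pi_k$ with $s_k$ decreasing to $s$; note that in \eqref{eq:30} only the intermediate point must lie in the partition, so $t$ needs no approximation. Then $d(\phi^{\pi_k}_{t,r}(a),\phi_{t,s_k}\circ\phi^{\pi_k}_{s_k,r}(a))\le L_TK_TN_\gamma(a)\varpi(\omega_{s_k,t})\le L_TK_TN_\gamma(a)\varpi(\omega_{s,t})$, the last step by super-additivity of $\omega$ and monotonicity of $\varpi$. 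Moreover $d(\phi^{\pi_k}_{s_k,r}(a),y_s)\le d(\phi^{\pi_k}_{s_k,r}(a),y_{s_k})+d(y_{s_k},y_s)\to 0$ by uniform convergence and continuity of $y$, so \eqref{eq:h2} together with the continuity of $(s,t)\mapsto\phi_{t,s}(b)$ yields $\phi_{t,s_k}\circ\phi^{\pi_k}_{s_k,r}(a)\to\phi_{t,s}(y_s)$; passing to the limit gives $d(y_t,\phi_{t,s}(y_s))\le K_TL_TN_\gamma(a)\varpi(\omega_{s,t})$, i.e.\ \eqref{eq:def_davie} for any $K\ge K_TL_T$. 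With this replacement your argument is precisely the paper's proof.
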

\begin{proof}
    If $y\in S_\phi(r,a)$ when $S_\phi(r,a)\neq\emptyset$,
    then there exists a sequence $\Set{\pi_k}_{k\in\NN}$
    of partitions such that $y_t=\lim\phi^{\pi_k}_{t,r}(a)$ uniformly in $t\in[r,T]$. We note that $y_r=a$.
    For $k\in\NN$ and  $s_k\in \pi_k$, with \eqref{eq:30}
    and Theorem~\ref{thm:1},
    \begin{align}
    \nonumber
	d(\phi^{\pi_k}_{t,r}(a),\phi_{t,s_k}\circ\phi^{\pi_k}_{s_k,r}(a))
	&=
	d(\phi^{\pi_k}_{t,s_k}\circ \phi^{\pi_k}_{s_k,r}(a),\phi_{t,s_k}\circ\phi^{\pi_k}_{s_k,r}(a))\\
	\nonumber
	&\leq L_TN_\gamma(\phi^{\pi_k}_{s_k,r}(a))\varpi(\omega_{s_k,t})\\
	&\leq L_TK_TN_\gamma(a)\varpi(\omega_{s_k,t}).\label{eq:conv_pi_k}
    \end{align}
    moreover, fixing $s\in [r,T]$ and using \eqref{eq:h1},
    \begin{multline}
    \label{eq:conv_phi_sk}
    d(\phi_{t,s_k}\circ\phi_{s_k,r}^{\pi_k}(a),\phi_{t,s}(y_s(a)))
    \\
    \leq
    d(\phi_{t,s_k}\circ\phi_{s_k,r}^{\pi_k}(a),\phi_{t,s_k}\circ y_s(a))+d(\phi_{t,s_k}\circ y_s(a),\phi_{t,s}\circ y_s(a))
    \\
    \leq (1+\delta_T)d(\phi_{s_k,r}^{\pi_k}(a),y_s(a))+\eta(\omega_{0,T})d(\phi_{s_k,r}^{\pi_k}(a),y_s(a))^\gamma
    \\
    +d(\phi_{t,s_k}\circ y_s(a),\phi_{t,s}\circ y_s(a)).
    \end{multline}
    Choosing $\Set{s_k}_{k\in\NN}$ so that $s_k$ decreases to $s$ and passing
    to the limit, we obtain with~\eqref{eq:conv_phi_sk} that $\phi_{t,s_k}\circ\phi_{s_k,r}^{\pi_k}(a)$ converges uniformly to $\phi_{t,s}\circ y_s(a)$. Thus, when $k\rightarrow +\infty$,   \eqref{eq:conv_pi_k} shows that $y$ is a D-solution. 
\end{proof}
\begin{lemma}
\label{lem:existence_solution}
Under Hypothesis~$1$, $G^{L_T}_\phi(r,a)$ is a non-empty compact
subset of the set of paths  $y\in\cC(\intSS,\uV)$ such that $y_r=a$
for any $r\in\TT$ and $a\in\uV$. It shows that \textup{H1} holds for $F:=G^{L_T}_\phi$.
\end{lemma}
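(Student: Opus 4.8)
The plan is to establish three things: (i) $G^{L_T}_\phi(r,a)$ is non-empty; (ii) every element is a continuous path starting at $a$ at time $r$; (iii) the set is compact in $\cC(\intSS,\uV)$ with $\intSS = [r,r+T]$. Claim (ii) is immediate from the definitions of $\Omega(r,a)$ and $G^K_\phi(r,a)$. For (i), I would invoke Proposition~\ref{prop:sel:1}, which gives $S_\phi(r,a)\neq\emptyset$ since $\uV$ is finite-dimensional, together with Lemma~\ref{lem:sel:2}: under Hypothesis~\ref{hypo:varpi} we have $K_T=1$, so $L_T\geq K_TL_T$ and hence $\emptyset\neq S_\phi(r,a)\subset G^{L_T}_\phi(r,a)$. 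This disposes of non-emptiness with essentially no new work.

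For (iii), compactness, I would argue via the Arzelà--Ascoli theorem, exactly in the spirit of the proof of Proposition~\ref{prop:sel:1}. First, \emph{uniform boundedness}: for $y\in G^{L_T}_\phi(r,a)$ and $t\in[r,r+T]$, applying \eqref{eq:def_davie} with $s=r$ and then \eqref{eq:h1} and \eqref{eq:h0} gives
\begin{equation*}
d(y_t,0)\leq d(y_t,\phi_{t,r}(a))+d(\phi_{t,r}(a),a)+d(a,0)\leq L_T N(a)\varpi(\omega_{r,r+T})+\delta_T N(a)+d(a,0),
\end{equation*}
a bound independent of $y$. Next, \emph{equicontinuity}: for $s\leq t$ in $\intSS$, write
\begin{equation*}
d(y_t,y_s)\leq d(y_t,\phi_{t,s}(y_s))+d(\phi_{t,s}(y_s),y_s)\leq L_T N(a)\varpi(\omega_{s,t})+\delta_T N(y_s),
\end{equation*}
using \eqref{eq:def_davie} and \eqref{eq:h1}; since $N$ is bounded (Hypothesis~\ref{hypo:varpi}) and both $\varpi$ and $\omega$ are continuous and vanish on the diagonal, the right-hand side tends to $0$ uniformly in $y$ as $t-s\to 0$, giving uniform equicontinuity of the family $G^{L_T}_\phi(r,a)$. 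Finally, \emph{closedness}: if $y^{(n)}\to y$ uniformly with each $y^{(n)}$ satisfying \eqref{eq:def_davie}, then since each $\phi_{t,s}$ is continuous, $\phi_{t,s}(y^{(n)}_s)\to\phi_{t,s}(y_s)$, and passing to the limit in the inequality preserves it; likewise $y_r=a$ passes to the limit. Hence $G^{L_T}_\phi(r,a)$ is a closed, bounded, equicontinuous subset of $\cC(\intSS,\uV)$, so it is compact by Arzelà--Ascoli (here using that $\uV$ is finite-dimensional so closed bounded sets are compact).

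The main obstacle, such as it is, is the equicontinuity estimate: one must be careful that the control on $d(y_t,y_s)$ is genuinely uniform over the whole funnel, which is why the boundedness of $N$ in Hypothesis~\ref{hypo:varpi} is essential (it replaces the pointwise bound $N(y_s)$ by a uniform constant). Everything else — non-emptiness via Lemma~\ref{lem:sel:2}, the $y_r=a$ condition, closedness — is routine once the right earlier results are cited.
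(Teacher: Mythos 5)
Your overall route is the same as the paper's: non-emptiness from Proposition~\ref{prop:sel:1} combined with Lemma~\ref{lem:sel:2} (with $K_T=1$ and $K=L_T$, which Hypothesis~\ref{hypo:varpi} allows), and compactness by Arzelà--Ascoli plus closedness, the latter obtained by passing to the limit in \eqref{eq:def_davie} using the spatial continuity of $\phi_{t,s}$. The uniform bound and the closedness step are fine.

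The one step that does not work as written is the equicontinuity estimate. You bound $d(\phi_{t,s}(y_s),y_s)\leq \delta_T N(y_s)$ via \eqref{eq:h1} and then claim the right-hand side tends to $0$ uniformly as $t-s\to 0$; but $\delta_T$ is a constant attached to the fixed horizon $T$, so the term $\delta_T N(y_s)\leq \delta_T\normsup{N}$ does not shrink as $t-s\to 0$, and your inequality only gives a uniform oscillation bound of size $\delta_T\normsup{N}$, not equicontinuity. The repair is immediate and is exactly what the paper does elsewhere: either (a) apply \eqref{eq:h1} over the subinterval $\itvcc{s,t}$, i.e.\ with $\delta_{t-s}$ in place of $\delta_T$, as in \eqref{eq:equi-cont} in the proof of Lemma~\ref{lem:G_closed}, which yields $d(y_t,y_s)\leq \left[L_T\varpi(\omega_{s,t})+\delta_{t-s}\right]\normsup{N}\to 0$; or (b) first use your uniform bound to confine all values $y_s$ to a fixed closed ball and then invoke Lemma~\ref{lem:unif:cont} with $(s',t')=(s,s)$ to make $\abs{\phi_{t,s}(b)-b}$ small uniformly over $b$ in that ball --- this is the argument of Proposition~\ref{prop:sel:1}, to which the paper's own (very terse) proof of this lemma defers. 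With either patch your proof is complete and essentially coincides with the paper's.
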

\begin{proof}
It follows directly from Proposition~\ref{prop:sel:1} and Lemma~\ref{lem:sel:2} (with $K_T=1$ and $K=L_T$) that $G_\phi^K(r,a)$ is not empty. Now, if 
$\{y^k\}_k$ is a sequence in $G_\phi^K(r,a)$ then $\{y^k\}_k$ 
is equi-continuous with the same argument as in the proof of Proposition~\ref{prop:sel:1}. 
The subsequence of $\{y^k\}_k$ converges in
$G_\phi^K(r,a)$  because $a\in\uV\mapsto\phi_{t,s}(a)$ is continuous for any $(s,t)\in\rTT_2$.
\end{proof}

Let us denote $\tilde{G}^{L_T}_\phi(r,a)$, the set of  paths $y\in G^{L_T}_\phi(r,a)$ reparametrised
on $\itvcc{0,1}$ as $t\in \itvcc{0,1}\mapsto\tilde{y}_t:=y_{r+t(T-r)}$.

\begin{lemma}
\label{lem:G_closed}
Let us assume Hypothesis~\ref{hypo:varpi}. 
Let $r\geq 0$, for any closed subset $\cK\subset\cC(\itvcc{0,1},\uV)$, the set
$S'(r):=\Set{a\in\uV\given\tilde{G}^{L_T}_\phi(r,a)\bigcap\cK\ne\emptyset}$ is closed in $\uV$, in particular it is a Borel set in $\uV$. It shows that \textup{H2} holds for $F=G^{L_T}_\phi$.
\end{lemma}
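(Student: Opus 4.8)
To prove Lemma~\ref{lem:G_closed} I would show directly that the complement of $S'(r)$ is open, i.e. that $S'(r)$ is sequentially closed (we are in a finite-dimensional, hence metrizable, setting). So I would take a sequence $\Set{a_n}_{n\in\NN}\subset S'(r)$ with $a_n\to a$ in $\uV$, pick for each $n$ a path $y^n\in G^{L_T}_\phi(r,a_n)$ whose reparametrisation $\tilde y^n$ lies in $\cK$, and aim to extract a subsequence converging uniformly on $\itvcc{0,1}$ to some $\tilde y$ which still lies in $\cK$ (closedness of $\cK$) and whose de-reparametrisation $y$ lies in $G^{L_T}_\phi(r,a)$. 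That last membership is what forces $a\in S'(r)$.

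\textbf{Key steps, in order.} First, uniform bounds: since $a_n\to a$, the $a_n$ all lie in a fixed closed ball $\bar B(0,R)$, and by the a priori estimate in Lemma~\ref{lem:existence_solution} / Theorem~\ref{thm:1} (using $N$ bounded, $K_T=1$), every $y^n$ takes values in a fixed bounded set depending only on $R$, $T$, $\varpi(\omega_{0,T})$, $\delta_T$ and $\normsup{N}$. Second, equi-continuity: exactly as in the proof of Proposition~\ref{prop:sel:1}, the Davie inequality \eqref{eq:def_davie} together with Lemma~\ref{lem:unif:cont} (applied on the ball containing the values of the $y^n$) gives a modulus of continuity for the $y^n$ that is independent of $n$; a D-solution satisfies $d(y^n_t,y^n_s)\le d(y^n_t,\phi_{t,s}(y^n_s))+d(\phi_{t,s}(y^n_s),y^n_s)\le L_T\normsup{N}\varpi(\omega_{s,t})+\delta_T\normsup{N}$ by \eqref{eq:def_davie} and \eqref{eq:h1}, which is a bona fide modulus since $\varpi$ and $\omega$ are continuous near the diagonal. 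Third, Ascoli–Arzelà: pass to a subsequence with $y^n\to y$ uniformly on $\itvcc{r,r+T}$; then $\tilde y^n\to\tilde y$ uniformly on $\itvcc{0,1}$, so $\tilde y\in\cK$ since $\cK$ is closed. Fourth, pass the defining inequalities to the limit: $y_r=\lim y^n_r=\lim a_n=a$; and for fixed $(s,t)\in\intSS^2_+$, from $d(y^n_t,\phi_{t,s}(y^n_s))\le L_T N(a_n)\varpi(\omega_{s,t})$ together with continuity of $a\mapsto\phi_{t,s}(a)$ (definition of $\cF(\uV)$) and continuity of $N$, I get $d(y_t,\phi_{t,s}(y_s))\le L_T N(a)\varpi(\omega_{s,t})$. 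Hence $y\in G^{L_T}_\phi(r,a)$ and $\tilde y\in\tilde G^{L_T}_\phi(r,a)\cap\cK$, so $a\in S'(r)$. This proves $S'(r)$ closed, hence Borel, which is precisely condition \textup{H2}.

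\textbf{Main obstacle.} The only delicate point is the equi-continuity of the family $\Set{y^n}_n$ \emph{uniformly in $n$}, i.e. with a modulus not depending on which starting point $a_n$ is used; this is where one must combine the $n$-independent a priori value bound (so that Lemma~\ref{lem:unif:cont} can be invoked on one fixed compact ball) with the $n$-independent constant $L_T$ and the fixed control $\omega$, $\varpi$. Everything else — the value bound, Ascoli–Arzelà, and passing \eqref{eq:def_davie} to the limit — is routine and parallels arguments already made for Proposition~\ref{prop:sel:1} and Lemma~\ref{lem:existence_solution}. One small care: the constant $N(a_n)$ on the right-hand side of \eqref{eq:def_davie} varies with $n$, but it is bounded by $\normsup{N}$ and converges to $N(a)$ by continuity of $N$, so the limit inequality holds with the correct constant $L_T N(a)$; alternatively one just uses the uniform bound $L_T\normsup{N}$, which already suffices since the definition of $G^K_\phi(r,a)$ only requires \emph{some} constant $K$, and here $K=L_T$ with $N(a)\le\normsup N$ absorbed, consistently with the convention $N_\gamma\ge 1$.
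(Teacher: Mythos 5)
Your proposal follows essentially the same route as the paper's proof: take $a_n\to a$ in $S'(r)$, choose $\tilde y^n\in\tilde G^{L_T}_\phi(r,a_n)\cap\cK$, establish uniform boundedness and equi-continuity from the Davie inequality, apply Ascoli--Arzelà, use closedness of $\cK$ for the limit path, and pass \eqref{eq:def_davie} to the limit via continuity of $\phi_{t,s}$ (and of $N$, so that $N(a_n)\to N(a)$) to get $\tilde y\in\tilde G^{L_T}_\phi(r,a)\cap\cK$, hence $a\in S'(r)$.

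Two points need repair, both at the step you yourself flag as the main obstacle. First, your displayed modulus of continuity is not one: the bound $d(y^n_t,y^n_s)\le L_T\normsup{N}\varpi(\omega_{s,t})+\delta_T\normsup{N}$ contains the fixed constant $\delta_T\normsup{N}$, which does not tend to $0$ as $t-s\to 0$, so the claim that this is \emph{a bona fide modulus} is false as written. The paper avoids this by estimating $d(\phi_{\tilde t,\tilde s}(\tilde y^k_s),\tilde y^k_s)\le\delta_{\tilde t-\tilde s}\normsup{N}$, i.e.\ by reading \eqref{eq:h1} on the subinterval so that the constant $\delta_{\tilde t-\tilde s}$ vanishes as $\tilde t-\tilde s\to0$; alternatively, the route you mention in the same sentence (Lemma~\ref{lem:unif:cont} applied on a fixed compact ball containing all values of the $y^n$, as in Proposition~\ref{prop:sel:1}, with $\phi_{s,s}=\mathrm{id}$) also works, but it is not what your displayed inequality says, so one of the two must actually be carried out. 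Second, the closing alternative, namely to \emph{just use the uniform bound $L_T\normsup{N}$}, goes in the wrong direction: membership in $G^{L_T}_\phi(r,a)$ requires \eqref{eq:def_davie} with right-hand side $L_T N(a)\varpi(\omega_{s,t})$, and an estimate with $\normsup{N}\ge N(a)$ in place of $N(a)$ does not imply it; your primary argument, passing to the limit with $N(a_n)\to N(a)$ by continuity of $N$, is the correct one (and is what the paper leaves implicit).
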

\begin{proof}
Let $\{a_k\}_{k\in\NN}$ be a convergent sequence of $S'(r)$. For each $k\in\NN$,
we choose a path $\tilde{y}^k\in \tilde{G}_\phi^{L_T}(r,a_k)\bigcap \cK$ (which is not empty by definition). Then, for every $s,t\in \itvcc{0,1}$, $s\leq t$,
\begin{align}
\label{eq:equi-cont}
d(\tilde{y}^k_t,\tilde{y}^k_s)\leq d(\tilde{y}^k_t,\phi_{\tilde{t},\tilde{s}}(\tilde{y}^k_s))+d(\phi_{\tilde{t},\tilde{s}}(\tilde{y}^k_s),\tilde{y}^k_s)\leq [L_T\varpi(\omega_{\tilde{s},\tilde{t}})+\delta_{\tilde{t}-\tilde{s}}]\normsup{N},
\end{align}
where $\tilde{t}:=r+t(T-r)$ and $\tilde{s}:=r+s(T-r)$. Since $\tilde{t}-\tilde{s}$ goes to zero when $t-s\rightarrow 0$,
it follows that $\{t\in \itvcc{0,1}\mapsto \tilde{y}^k_t\}_{k\in\NN}$ is equi-continuous. 

The sequence $\{a_k\}_{k\in\NN}$ converges, so it is bounded by a constant
$A\geq 0$. Applying~\eqref{eq:equi-cont} between $s=0$ and $t$, we get
$\abs{\tilde{y}^k_t}\leq (L_T\varpi(\omega_{0,1})+\delta_{T})\normsup{N}+A$,
which proves that $t\in \itvcc{0,1}\mapsto y^k$ is uniformly bounded.

By Ascoli-Arzelà theorem, there is a convergent subsequence $\{\tilde{y}^{k_i}\}_{i\in\NN}$ in
$(\cC(\itvcc{0,1},\uV),\normsup{\cdot})$ to a path $y$. This path belongs to $\cK$ since $\cK$ is closed. Because $\phi_{t,s}$ is continuous, $\tilde{y}\in \tilde{G}^{L_T}_\phi(r,a)$.
Hence $S'(r)$ is closed and then Borel.
\end{proof}

The proof of the next lemma is an immediate consequence of the definition
of D-solutions.
\begin{lemma}
    \label{lem:sel:4}
    If $t\in[r,r+T]\mapsto y_t(r,a)$ belongs to $G^{L_T}_\phi(r,a)$,
    then for any $r'\geq 0$, its restriction $t\in[r+r',r+r'+T]\mapsto y_{t}(r,a)$ belongs to $G^{L_T}_\phi(r+r',y_{r+r'}(r,a))$. It shows that \textup{H3} holds for $F=G^{L_T}_\phi$.
\end{lemma}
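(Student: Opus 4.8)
The plan is to check the two clauses of \textup{H3} in Definition~\ref{def:local_funnel} for the family $F=G^{L_T}_\phi$; both reduce to the observation that the defining inequality \eqref{eq:def_davie} is quantified over a simplex of pairs $(s,t)$ and so survives passing to a sub-simplex. The first clause, $T(r+r',y_{r+r'}(r,a))\geq T(r,a)-r'$, is immediate from Lemma~\ref{lem:sel:T}: there the terminal time is the constant $T$, independent of the base point, so the left-hand side equals $T$ and the right-hand side equals $T-r'\leq T$. Hence only the second clause requires an argument.

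First I would fix $0\leq r'<T$, write $b:=y_{r+r'}(r,a)$ and $z_t:=y_t(r,a)$ on the sub-interval starting at $r+r'$ (which, the terminal time being $T$, plays the role of $[r+r',r+r'+T]$ in Definition~\ref{def:local_funnel}). By continuity of $y$ and $z_{r+r'}=b$ we have $z\in\Omega(r+r',b)$, so the only thing left to produce is the D-solution estimate \eqref{eq:def_davie} for $z$ with base point $b$. But membership $y\in G^{L_T}_\phi(r,a)$ says exactly that $d(y_t(r,a),\phi_{t,s}(y_s(r,a)))\leq L_T N(y_r(r,a))\varpi(\omega_{s,t})$ for \emph{all} pairs $(s,t)$ with $r\leq s\leq t$ in the domain of $y$; restricting this quantifier to the pairs with $r+r'\leq s\leq t$ yields precisely the estimate needed for $z$, \emph{except} that on the right-hand side one still reads $N(y_r(r,a))$ in place of $N(b)$.

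The only delicate point — and the closest thing to an obstacle — is therefore that this change of base point inside $N$ costs nothing. Under Hypothesis~\ref{hypo:varpi} the function $N$ is Lipschitz and bounded below by $1$, and combining \eqref{eq:h1} with the D-solution estimate for $y$ bounds $d(y_{r+r'}(r,a),y_r(r,a))$ by a fixed multiple of $N(y_r(r,a))$; hence the ratio $N(y_r(r,a))/N(b)$ is controlled, and for $T$ chosen as in Hypothesis~\ref{hypo:varpi} and Theorem~\ref{thm:1} the discrepancy is absorbed into the constant. (If one instead reads \eqref{eq:def_davie} in the form \eqref{eq:intro:davie}, with the bounded factor $N$ already incorporated into the constant, the statement is literally the restriction of the index set, which is why it is an immediate consequence of the definition.) Either way $z\in G^{L_T}_\phi(r+r',b)$, the case $r'=0$ being trivial; this proves \textup{H3}, and I do not expect any further difficulty.
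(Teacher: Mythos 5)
Correct, and essentially the paper's own route: the paper gives no argument beyond calling the lemma an immediate consequence of the definition, and your core observation — that \eqref{eq:def_davie} is quantified over pairs $(s,t)$, so restricting the time interval only shrinks the index set, while the terminal-time clause is trivial since $T(r,a)\equiv T$ by Lemma~\ref{lem:sel:T} — is exactly that. The only wrinkle, the change of base point inside $N$, is best settled by your parenthetical reading: under Hypothesis~\ref{hypo:varpi} the function $N$ is bounded and the paper in effect works with $\normsup{N}$ (as it does in Lemmas~\ref{lem:G_closed} and~\ref{lem:sel:5}), which makes the restriction literally a member of $G^{L_T}_\phi(r+r',y_{r+r'}(r,a))$, whereas literally \textquote{absorbing} a ratio $N(y_r(r,a))/N(y_{r+r'}(r,a))>1$ into the constant would only yield membership in $G^{K}_\phi$ for some $K>L_T$, not in $G^{L_T}_\phi$ itself.
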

\begin{lemma}
    \label{lem:sel:5}
    We assume that Hypothesis~\ref{hypo:varpi} hold.
   For $r'\geq 0$, if $y\in G^{L_T}_\phi(r,a)$ and $z\in G^{L_T}_\phi(r+r',y_{r+r'}(r,a))$, 
   then $y\bowtie_{r+r'} z\in G^{L_T}_\phi(r,a)$. It shows that \textup{H4} holds for $F:=G^{L_T}_\phi$ 
\end{lemma}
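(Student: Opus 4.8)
We assume that Hypothesis~\ref{hypo:varpi} holds. For $r' \geq 0$, if $y \in G^{L_T}_\phi(r,a)$ and $z \in G^{L_T}_\phi(r+r', y_{r+r'}(r,a))$, then $y \bowtie_{r+r'} z \in G^{L_T}_\phi(r,a)$.

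Let me think about how to prove this.

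The plan is to verify directly that $w\eqdef y\bowtie_{r+r'}z$, viewed as a path on $\itvcc{r,r+T}$ (so that $w=z$ on $\itvcc{r+r',r+T}$), is a D-solution with constant $L_T$ starting from $a$ at time $r$. Since $N$ is bounded under Hypothesis~\ref{hypo:varpi}, I would first replace $N$ by the constant $M\eqdef\normsup{N}$: this only weakens \eqref{eq:h1} and \eqref{eq:h3}, leaves \eqref{eq:h2} (with $\eta=0$) and the Lipschitz property of $N$ unchanged, and makes membership in $G^{L_T}_\phi(r,a)$ equivalent to the base-point-free bound $d(y_t,\phi_{t,s}(y_s))\le L_TM\varpi(\omega_{s,t})$. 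Note $w$ is continuous (because $y_{r+r'}=z_{r+r'}$) with $w_r=a$. Fix $(s,t)$ with $r\le s\le t\le r+T$ and write $V(s,t)\eqdef d(w_t,\phi_{t,s}(w_s))$. If $t\le r+r'$ then $w$ agrees with $y$ on $\itvcc{s,t}$ and $V(s,t)\le L_TM\varpi(\omega_{s,t})$ because $y\in G^{L_T}_\phi(r,a)$; if $s\ge r+r'$ then $w$ agrees with $z$ on $\itvcc{s,t}$ and $V(s,t)\le L_TM\varpi(\omega_{s,t})$ because $z\in G^{L_T}_\phi(r+r',y_{r+r'})$. The whole content of the lemma is therefore the straddling case $s<r+r'<t$.

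For a straddling pair, I would establish two recursive estimates by inserting intermediate $\phi$-applications and invoking \eqref{eq:h2}, \eqref{eq:h3} and the D-solution bounds for $y$ and $z$. Peeling from the right, for $u\in\itvco{r+r',t}$ (going through $\phi_{t,u}(z_u)$ and $\phi_{t,u}(\phi_{u,s}(y_s))$):
\begin{equation*}
    V(s,t)\le L_TM\varpi(\omega_{u,t})+(1+\delta_T)V(s,u)+M\varpi(\omega_{s,t});
\end{equation*}
peeling from the left, for $u\in\itvoc{s,r+r'}$ (going through $\phi_{t,u}(y_u)$ and $\phi_{t,u}(\phi_{u,s}(y_s))$):
\begin{equation*}
    V(s,t)\le V(u,t)+(1+\delta_T)L_TM\varpi(\omega_{s,u})+M\varpi(\omega_{s,t}).
\end{equation*}
Here the endpoint values $V(r+r',t)$ and $V(s,r+r')$ are themselves covered by the $z$- and $y$-D-solution bounds; in particular, taking $u=r+r'$ in the first estimate yields the crude a priori bound $V(s,t)\le\bigl((2+\delta_T)L_T+1\bigr)M\varpi(\omega_{s,t})$ for every straddling pair, so $\sup\{V(s,t)/(M\varpi(\omega_{s,t})):(s,t)\text{ straddling}\}$ is finite.

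The core of the argument then reproduces the bisection step of the proof of Theorem~\ref{thm:1}. Since $\omega$ is a continuous super-additive control, given a straddling pair $(s,t)$ one can choose $u\in\itvcc{s,t}$ with $\omega_{s,u}\le\omega_{s,t}/2$ and $\omega_{u,t}\le\omega_{s,t}/2$, and apply the left-peeling estimate if $u\le r+r'$, the right-peeling one if $u\ge r+r'$; in either case the leftover $V$-term is at a straddling (or boundary) pair whose control is $\le\omega_{s,t}/2$, hence controlled either by the crude ratio at the next dyadic scale or, at a boundary pair, by $L_T$. Letting $K_n$ be the supremum of $V(s,t)/(M\varpi(\omega_{s,t}))$ over straddling pairs with $\omega_{s,t}\le 2^{-n}\omega_{r,r+T}$ and using \eqref{eq:h4} in the form $\varpi(\delta/2)\le\tfrac{\varkappa}{2}\varpi(\delta)$, the two estimates collapse to a recursion of the shape
\begin{equation*}
    K_n\le\varkappa(1+\delta_T)\,\max(L_T,K_{n+1})+1 .
\end{equation*}
Because $\varkappa(1+\delta_T)<1$ by Hypothesis~\ref{hypo:varpi} and $L_T=2/(1-\varkappa(1+\delta_T))$ satisfies $\varkappa(1+\delta_T)L_T+1\le L_T$, iterating this recursion and using that $(K_n)$ is bounded forces $K_n\le L_T$ for every $n$; in particular $K_0\le L_T$, i.e.\ $V(s,t)\le L_TM\varpi(\omega_{s,t})$ for straddling pairs as well. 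Combined with the two easy cases this gives $w\in G^{L_T}_\phi(r,a)$.

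The step I expect to be the main obstacle is precisely this last one. Splitting at the \emph{fixed} time $r+r'$ brings in coefficients $L_T$ and $(1+\delta_T)L_T$ in front of $\varpi$ of sub-controls that need not be small, so the naive estimate loses a factor and does not close at the constant $L_T$; what saves it is that one may instead split at a time bisecting $\omega$, thereby reducing to sub-intervals of halved control, and then run a geometric bootstrap powered by \eqref{eq:h4} and the slack built into the choice of $L_T$ in Theorem~\ref{thm:1}.
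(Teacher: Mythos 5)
Your overall architecture is sound and, where it works, it is genuinely more direct than the paper's argument. The two peeling estimates are exactly the paper's inequality \eqref{eq:U_tau_t} specialised so that one of the two terms is absorbed by the $y$- or $z$-bound, the crude a priori bound matches the paper's \eqref{eq:concatenation}, and your dyadic bootstrap (sup $K_n$ over straddling pairs with $\omega_{s,t}\leq 2^{-n}\omega_{r,r+T}$, recursion with factor $\varkappa(1+\delta_T)<1$, slack $(1-\varkappa(1+\delta_T))L_T=2\geq 1$, geometric damping of the crude constant) closes at the constant $L_T$ without ever leaving the continuum. The paper instead works along partitions: it cannot split at an exact $\omega$-median, so it keeps a middle interval of successive partition points, pays for it with the crude bound, and neutralises that payment by replacing $\varpi$ with $\varpi^\lambda$ (so the middle term carries a factor $\varpi^{1-\lambda}(\omega_{s,s'})$ that vanishes with the mesh), then passes to the limit in the mesh, in $\lambda\uparrow 1$ and in $T(\lambda)\uparrow T$. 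Your route avoids the whole $\varpi^\lambda$/mesh-refinement detour. Your reduction of $N$ to $\normsup{N}$ is consistent with how the paper itself uses $\normsup{N}$ throughout this section.

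The one step that does not survive scrutiny in the paper's setting is the existence of the bisection point: you invoke a \emph{continuous} control, but the paper only assumes super-additivity, $\omega_{s,s}=0$ and continuity \emph{near the diagonal}. Under that weaker definition a point $u$ with both $\omega_{s,u}\leq\omega_{s,t}/2$ and $\omega_{u,t}\leq\omega_{s,t}/2$ need not exist. For instance, on $\TT=[0,1]$ take $\omega_{a,b}:=(b-a)+J\,h(a,b)$ with $h(a,b)=1$ exactly when ($a\leq 1/4$ and $b\geq 1/2$) or ($a<1/2$ and $b\geq 3/4$), and $h=0$ otherwise; one checks $h(a,m)+h(m,b)\leq h(a,b)$, so $\omega$ is a control in the paper's sense (any nonzero increment has length $\geq 1/4$, so near-diagonal continuity holds), yet for $s=0$, $t=1$ and $J$ large every $u<1/2$ has $\omega_{u,1}>\omega_{0,1}/2$ and every $u\geq 1/2$ has $\omega_{0,u}>\omega_{0,1}/2$. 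So as written your induction step can fail for admissible controls. The gap is repairable inside your scheme: take $u^-<u^+$ with $\omega_{s,u^-}\leq\omega_{s,t}/2$, $\omega_{u^+,t}\leq\omega_{s,t}/2$ and $\omega_{u^-,u^+}$ arbitrarily small (this is what the paper's choice of successive partition points achieves), chain two peelings through $u^-$ and $u^+$, bound the middle term by the crude estimate and let $\omega_{u^-,u^+}\to 0$ for each fixed pair; the recursion then reads $K_n\leq\tfrac{\varkappa}{2}(2+\delta_T)\max(L_T,K_{n+1})+2$, which still closes because $(1-\varkappa(1+\delta_T))L_T=2$. With that repair (or with the blanket assumption that $\omega$ is continuous off the diagonal, stated explicitly), your proof is a valid and shorter alternative to the paper's.
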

\begin{proof}
    Let us write $x:=y\bowtie_s z$ where $s:=r+r'$ and $U_{\tau,t}:=d(x_t,\phi_{t,\tau}(x_\tau))$ for $\tau\leq t$. On the one hand, for any $r\leq \tau\leq s\leq t$
   with \eqref{eq:def_davie}, \eqref{eq:h2} and \eqref{eq:h3},
\begin{align}
\nonumber
U_{\tau,t}&\leq d(x_t,\phi_{t,s}(x_s))
	+d(\phi_{t,s}(x_s),\phi_{t,s}\circ\phi_{s,\tau}(x_\tau))
	+d(\phi_{t,s}\circ\phi_{s,\tau}(x_\tau),\phi_{t,\tau}(x_\tau))\\
	\label{eq:concatenation}
&\leq \normsup{N}(2+\delta_T)L_T\varpi(\omega_{\tau,t}).
\end{align}    
On the other hand, for $s\leq \tau\leq t$ or $\tau \leq t\leq s$ with \eqref{eq:def_davie}
\begin{align}
\label{eq:concatenation_2}
U_{\tau,t}\leq \normsup{N}L_T\varpi(\omega_{\tau,t}).
\end{align}
Thus, combining \eqref{eq:concatenation} and \eqref{eq:concatenation_2}, 
for any $r\leq\tau\leq t\leq T$,
\begin{align*}
U_{\tau,t}\leq \normsup{N}(2+\delta_T)L_T\varpi(\omega_{\tau,t}).
\end{align*}
 Besides, for any $r\leq \tau \leq u\leq t\leq T$ with \eqref{eq:h2} and \eqref{eq:h3},
 \begin{align}
 \label{eq:U_tau_t}
 U_{\tau,t}\leq U_{u,t}+(1+\delta_T)U_{\tau,u}+\normsup{N}\varpi(\omega_{\tau,t}).
 \end{align}

Let $\lambda\in (0,1)$ such that $\varpi^\lambda$ satisfies~\eqref{eq:h4} with $\varkappa_\lambda:=2^{1-\lambda}\varkappa^\lambda<1$.
Let $T(\lambda)>0$ be a real number such that $\varkappa_\lambda (1+\delta_{T(\lambda)})<1$.
For any, two successive  points $\tau,t$ of a subdivision~$\pi$,
\begin{align}
U_{\tau,t}\leq D_{L_T}(\pi,\lambda)\varpi^\lambda(\omega_{\tau,t}),\label{eq:induction_1}
\end{align}
where $D_{L_T}(\pi,\lambda):=\normsup{N}(2+\delta_T)L_T\sup_{\substack{\tau,t\text{~successive points of }\pi}}\varpi^{1-\lambda}(\omega_{\tau,t})$.

Let us show by induction over the distance $m$ between points $\tau$ and $t$ in $\pi\cap [0,T(\lambda)]$ that
\begin{align}
U_{\tau,t}\leq A_{L_T}(\pi,\lambda)\varpi^\lambda(\omega_{\tau,t}),\label{eq:h_induction}
\end{align}
where
\begin{equation*}
A_{L_T}(\pi,\lambda):= \frac{D_{L_T}(\pi,\lambda)(1+\delta_{T(\lambda)})+2\normsup{N}\varpi^\lambda(\omega_{0,T(\lambda)})}{1-\varkappa_\lambda (1+\delta_{T(\lambda)})}.
\end{equation*}
When $m=0$, $U_{\tau,\tau}=0$ so that \eqref{eq:h_induction} holds.
For $m=1$, $\tau$ and $t$ are successive points
then \eqref{eq:h_induction} holds with \eqref{eq:induction_1}.
Now, we assume that \eqref{eq:h_induction} holds for any 
two points at distance $m$. Let $\tau$ and $t$ be two points at distance $m+1$ in $\pi\cap[0,T(\lambda)]$.
Since $\omega$ is super-additive, one may choose two successive points $s$ and $s'$ 
in $\pi$ with $\tau<s<s'<t$ such that 
$\omega_{\tau,s}\leq \omega_{\tau,t}/2$ and $\omega_{s',t}\leq \omega_{\tau,t}/2$, as 
in the proof of Theorem~\ref{thm:1}.
Then, by applying \eqref{eq:U_tau_t} between $(\tau,s,s')$
and $(s,s',t)$ we obtain,
\begin{align*}
U_{\tau,t}&
    \leq U_{s,t}+(1+\delta_{T(\lambda)})U_{\tau,s}+\normsup{N}\varpi^{1-\lambda}(\omega_{0,T(\lambda)})\varpi^\lambda(\omega_{\tau,t})\\
&\leq U_{s',t}+(1+\delta_{T(\lambda)})U_{s,s'}+(1+\delta_{T(\lambda)})U_{\tau,s}+2\normsup{N}\varpi^{1-\lambda}(\omega_{0,T(\lambda)})\varpi^\lambda(\omega_{\tau,t})\\
&\leq [A_{L_T}(\pi,\lambda)\varkappa_\lambda(1+\delta_{T(\lambda)})+(1+\delta_{T(\lambda)})D_{L_T}(\pi,\lambda)+2\normsup{N}\varpi^{1-\lambda}(\omega_{0,T(\lambda)})]\varpi^\lambda(\omega_{\tau,t})\\
&\leq A_{L_T}(\pi,\lambda)\varpi^\lambda(\omega_{\tau,t}),
\end{align*}
with our choice of $A_{L_T}(\pi,\lambda)$. 
This concludes the induction, so \eqref{eq:h_induction} holds for any
$\tau,t\in\pi\cap [0,T(\lambda)]^2$.

Clearly, $D_{L_T}(\pi,\lambda)\rightarrow 0$ when the mesh of $\pi$ goes to
zero. Then, $A_{L_T}(\pi,\lambda)\rightarrow
A(\lambda):=2\normsup{N}\varpi^{\lambda}(\omega_{0,T(\lambda)})/(1-\varkappa_\lambda(1+\delta_{T(\lambda)}))$ when the mesh of $\pi$ goes to zero.

By continuity of $(\tau,t)\mapsto U_{\tau,t}$, considering finer and finer partitions leads to 
$U_{\tau,t}\leq A(\lambda)\varpi^\lambda(\omega_{\tau,t})$ for
any $r\leq \tau\leq t\leq T(\lambda)$.

Finally, choosing $T(\lambda)$ so that $T(\lambda)$ increases to $T$ defined in Hypothesis~\ref{hypo:varpi} when $\lambda$ goes to $1$, we conclude that for any $ r\leq\tau\leq t\leq T$,
\begin{align*}
U_{\tau,t}\leq \normsup{N}{L_T}\varpi(\omega_{\tau,t}),
\end{align*}
where $L_T$ is defined in Hypothesis~\ref{hypo:varpi}.
This proves that $z\in G_\phi^{L_T}(r,a)$.
\end{proof}

\begin{proof}[Proof of Theorem~\ref{thm:nls:1}]
    Lemma~\ref{lem:sel:T}-\ref{lem:sel:5} prove that conditions \textup{H0-H4} of Definition~\ref{def:local_funnel} hold for $F=G_\phi^{L_T}$. This means that $G^{L_T}(r,a)$ is a family of abstract local integral funnels.
    We apply Theorem~1 in \cite{cardona_semiflow2}. For any $(r,a)\in\TT\times\uV$, 
    there exists a measurable map $a\mapsto (t\mapsto \psi_{t,r}(a))$
    with respect to the Borel
    subsets of $\cC^0(\TT,\uV)$ with the property
    that $\psi_{r,r}(a)=a$ and $\psi_{t,s}\circ\psi_{s,r}(a)=\psi_{t,r}(a)$, $t\geq r$. 
\end{proof}



\section{Lipschitz flows}
\label{sec:lipschitz}
A Lipschitz almost flow which has the flow property is said to be
a Lipschitz flow. We recast the definition.

\begin{definition}[Lipschitz flow]
    \label{def:lipschitz:flow}
    A flow $\psi\in\cF(\uV)$ is said to be a \emph{Lipschitz flow}
    if for any $(s,t)\in\rTT_2$, $\psi_{t,s}$ is Lipschitz in 
    space with $\normlip{\psi_{t,s}}\leq 1+\delta_T$.
\end{definition}

In this section, we consider galaxies that contain a Lipschitz flow. 

We prove that such a Lipschitz flow $\psi$ is the only possible flow in
the galaxy (Theorem~\ref{thm:8}), and that the iterated almost flow $\phi^\pi$ of any almost flow $\phi$
converges to $\psi$ (Theorem~\ref{thm:7}). We also characterize the rate of convergence (Theorem~\ref{thm:rate_conv}).

Let us choose $\lambda\in(0,1)$ such that $\varpi^\lambda$ satisfies
the same properties as $\varpi$ up to changing $\varkappa$ to 
$\varkappa_\lambda\eqdef 2^{1-\lambda}\varkappa^\lambda$, provided
$\varkappa_\lambda<1$. This is possible as soon as $\lambda>1/(1-\log_2(\varkappa))$ with \eqref{eq:h4}.

Clearly, if for $\psi,\chi\in\cF(\uV)$, $\Delta_{N,\varpi}(\psi,\chi)<+\infty$, 
then 
\begin{equation}
    \label{eq:42}
\Delta_{N,\varpi^\lambda}(\psi,\chi)\leq \Delta_{N,\varpi}(\psi,\chi)
\varpi^{1-\lambda}(\omega_{0,T})<+\infty,
\end{equation}
where $\varpi$ is defined by \eqref{eq:h4}.
Hence, the galaxies remain the same when $\varpi$ is changed to $\varpi^\lambda$.
We define
\begin{align}
    \label{eq:theta}
    \Theta(\pi)\eqdef\sup_{d_\pi(s,s')=1}\varpi^{1-\lambda}(\omega_{s,s'}).
\end{align}

\begin{theorem}
    \label{thm:7}
    Let $\phi$ be an almost flow such that $\normlip{\phi^\pi}\leq 1+\delta_T$
    whatever the partition $\pi$, we say that $\phi$ satisfies the uniform Lipschitz (UL) condition. Then there exists a Lipschitz flow $\zeta\in\cF(\uV)$
    with $\normlip{\zeta_{s,t}}\leq 1+\delta_T$ 
    such that $\{\phi^\pi\}$ converges to~$\zeta$
    as $\abs{\pi}\rightarrow 0$.
\end{theorem}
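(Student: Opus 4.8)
The plan is to show that the net $\{\phi^\pi\}_\pi$ is Cauchy for the distance $\Delta_{N,\varpi^\lambda}$ on $\cF(\uV)$, so that it converges to some limit $\zeta$, and then to verify that $\zeta$ inherits the flow property, the Lipschitz bound, and membership in the galaxy of $\phi$. First I would fix two partitions $\pi\subset\pi'$ (a refinement) and estimate $\Delta_{N,\varpi^\lambda}(\phi^{\pi},\phi^{\pi'})$. The key point is that on each sub-interval $[t_i,t_{i+1}]$ of $\pi$ one compares $\phi_{t_{i+1},t_i}$ with $(\phi^{\pi'})_{t_{i+1},t_i}$, and by Theorem~\ref{thm:1} applied on $[t_i,t_{i+1}]$ the latter is within $L_T N(a)\varpi(\omega_{t_i,t_{i+1}})$ of the former. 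One then telescopes along the partition $\pi$, using the (UL) hypothesis $\normlip{\phi^\pi}\le 1+\delta_T$ to propagate the errors through the compositions without blow-up: a bound of the form
\begin{equation*}
    d\bigl((\phi^{\pi'})_{t,s}(a),\phi^{\pi}_{t,s}(a)\bigr)\le C\,\normsup{N}\sum_{[t_i,t_{i+1}]\subset[s,t]}(1+\delta_T)^{\#}\varpi(\omega_{t_i,t_{i+1}}),
\end{equation*}
which, after extracting $\Theta(\pi)=\sup_{d_\pi(s,s')=1}\varpi^{1-\lambda}(\omega_{s,s'})$ and using super-additivity of $\omega$ together with $\varpi^\lambda$ satisfying \eqref{eq:h4}, gives $\Delta_{N,\varpi^\lambda}(\phi^{\pi'},\phi^{\pi})\le C'\Theta(\pi)$ with $C'$ independent of $\pi'$. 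Since $\Theta(\pi)\to 0$ as $\abs{\pi}\to 0$ (because $\varpi$ is continuous with $\varpi(0)=0$ and $\omega$ is continuous near the diagonal), the net is Cauchy.

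Next, since $(\uV,d)$ is complete, $(\cF(\uV),\Delta_{N,\varpi^\lambda})$ is complete (as noted after Notation~\ref{not:norms}, with $\varpi$ replaced by $\varpi^\lambda$, which is licit by \eqref{eq:42}), so $\phi^\pi\to\zeta$ for some $\zeta\in\cF(\uV)$. From Theorem~\ref{thm:1}, $\Delta_{N,\varpi}(\phi^\pi,\phi)\le L_T$ uniformly in $\pi$, hence $\Delta_{N,\varpi}(\zeta,\phi)\le L_T<+\infty$ and $\zeta$ lies in the galaxy of $\phi$ (this is exactly the Corollary following Theorem~\ref{thm:1}). The Lipschitz bound $\normlip{\zeta_{t,s}}\le 1+\delta_T$ passes to the limit: for fixed $(s,t)$ and $a,b\in\uV$, $d(\phi^\pi_{t,s}(a),\phi^\pi_{t,s}(b))\le(1+\delta_T)d(a,b)$ by (UL), and the left-hand side converges to $d(\zeta_{t,s}(a),\zeta_{t,s}(b))$ (pointwise convergence in $a$ and in $b$ follows from $\Delta_{N}$-convergence).

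Finally, the flow property: fix $(r,s,t)\in\rTT_3$ and $a\in\uV$. I would choose a sequence of partitions $\pi_n$ with $\abs{\pi_n}\to 0$ and $s\in\pi_n$ for every $n$; then by \eqref{eq:30}, $\phi^{\pi_n}_{t,r}=\phi^{\pi_n}_{t,s}\circ\phi^{\pi_n}_{s,r}$. Passing to the limit, the left side tends to $\zeta_{t,r}(a)$; for the right side one writes
\begin{equation*}
    d\bigl(\phi^{\pi_n}_{t,s}(\phi^{\pi_n}_{s,r}(a)),\zeta_{t,s}(\zeta_{s,r}(a))\bigr)\le d\bigl(\phi^{\pi_n}_{t,s}(\phi^{\pi_n}_{s,r}(a)),\phi^{\pi_n}_{t,s}(\zeta_{s,r}(a))\bigr)+d\bigl(\phi^{\pi_n}_{t,s}(\zeta_{s,r}(a)),\zeta_{t,s}(\zeta_{s,r}(a))\bigr),
\end{equation*}
bounding the first term by $(1+\delta_T)d(\phi^{\pi_n}_{s,r}(a),\zeta_{s,r}(a))\to 0$ via (UL), and the second by $\Delta_N(\phi^{\pi_n}_{t,s},\zeta_{t,s})N(\zeta_{s,r}(a))\to 0$. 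Hence $\zeta_{t,r}=\zeta_{t,s}\circ\zeta_{s,r}$, so $\zeta$ is a Lipschitz flow. The main obstacle is the first step: getting the Cauchy estimate with a constant that does not depend on the finer partition $\pi'$, which is precisely where the uniform control of Theorem~\ref{thm:1} on each sub-interval and the uniform Lipschitz hypothesis are both essential — without (UL) the telescoped errors would accumulate with uncontrolled multiplicative factors.
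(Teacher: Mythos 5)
Your proposal is correct in outline and ends the same way the paper does (limit by completeness, Lipschitz bound inherited from (UL), flow property by inserting $s$ into the partitions via \eqref{eq:30} and controlling the composition with (UL) and $\Delta_N$-convergence), but your key Cauchy estimate is obtained by a genuinely different route. The paper does not telescope: it checks that $(\phi^\sigma,\phi^\pi)$, $\pi\subset\sigma$, satisfies Hypothesis~\ref{hyp:uniqueness:pi} with $\beta_\psi=\beta_\chi=0$ and invokes Lemma~\ref{thm:2}, a Davie-style induction on the distance between points of $\pi$ with the splitting point chosen so that $\omega_{r,s},\omega_{s',t}\leq\omega_{r,t}/2$; the per-interval error $\gamma(\pi)\leq L_T\Theta(\pi)$ comes from Theorem~\ref{thm:1} exactly as in your sketch. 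Your direct telescoping along the coarse partition works and is more elementary, but three points must be made precise for it to be a proof. First, the Lipschitz factor must occur only once per term: telescope through the hybrid maps $\phi^{\pi}_{t,t_i}\circ\phi^{\pi'}_{t_i,s}$, so each increment is bounded by $\normlip{\phi^{\pi}_{t,t_{i+1}}}\,d\bigl(\phi^{\pi'}_{t_{i+1},t_i}(b),\phi_{t_{i+1},t_i}(b)\bigr)\leq(1+\delta_T)L_T N(b)\varpi(\omega_{t_i,t_{i+1}})$ with $b=\phi^{\pi'}_{t_i,s}(a)$; your factor $(1+\delta_T)^{\#}$ must not grow with the number of intervals, otherwise the bound is useless as $\abs{\pi}\to0$. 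Second, Theorem~\ref{thm:7} does not assume $N$ bounded, so $\normsup{N}$ should be replaced by $K_TN(a)$ via \eqref{eq:thm1_KT}. Third, the step you leave implicit, $\sum_i\varpi^\lambda(\omega_{t_i,t_{i+1}})\leq C\varpi^\lambda(\omega_{s,t})$, is true but needs an argument: group the intervals dyadically according to $\omega_{s,t}/2^{k+1}<\omega_{t_i,t_{i+1}}\leq\omega_{s,t}/2^k$; super-additivity of $\omega$ bounds the $k$-th group by $2^{k+1}$ intervals, while iterating \eqref{eq:h4} for $\varpi^\lambda$ gives $\varpi^\lambda(\omega_{s,t}/2^k)\leq(\varkappa_\lambda/2)^k\varpi^\lambda(\omega_{s,t})$, hence $C=2/(1-\varkappa_\lambda)$. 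With these repairs you recover the analogue of \eqref{eq:cauchy}, with a constant independent of the finer partition, and even the rate of Theorem~\ref{thm:rate_conv}; what your route does not replace is Lemma~\ref{thm:2} itself, which the paper reuses for uniqueness (Theorem~\ref{thm:8}), where the second flow carries no (UL)-type control and a telescoping argument is not available.
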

\begin{theorem}
\label{thm:rate_conv}
    Let $\phi$ be an almost flow and $\psi$ be a
    Lipschitz flow with $\psi\sim\phi$. Then 
    there exists a constant $K$ that depends only on $\lambda$, $\Delta_{N,\varpi}(\phi,\psi)$,
    $\varkappa$ and $T$ (assumed to be small enough) so that 
    \begin{equation*}
	\Delta_{N,\varpi^\lambda}(\psi,\phi^\pi)
	\leq K\Theta(\pi).
    \end{equation*}
    In particular, $\Set{\phi^\pi}_\pi$ converges to $\psi$
    as $\abs{\pi}\rightarrow 0$.
\end{theorem}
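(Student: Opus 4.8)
The plan is to estimate $\Delta_{N,\varpi^\lambda}(\psi,\phi^\pi)$ by an induction on the distance $d_\pi(s,t)$ between points of the partition, in the spirit of the proof of Theorem~\ref{thm:1}, but now comparing $\phi^\pi$ against the \emph{flow} $\psi$ rather than against $\phi$. First I would fix a partition $\pi$ and write, for $(s,t)\in\pi^+_2$ and $a\in\uV$,
\[
V_{s,t}(a)\eqdef d\bigl(\psi_{t,s}(a),\phi^\pi_{t,s}(a)\bigr),
\]
and aim to prove $V_{s,t}(a)\le A\,N(a)\,\varpi^\lambda(\omega_{s,t})$ with $A$ proportional to $\Theta(\pi)$. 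For successive points $s,s'$ one has $\phi^\pi_{s',s}=\phi_{s',s}$, so $V_{s,s'}(a)\le d(\psi_{s',s}(a),\phi_{s',s}(a))\le \Delta_{N,\varpi}(\phi,\psi)\,N(a)\,\varpi(\omega_{s,s'})\le \Delta_{N,\varpi}(\phi,\psi)\,\Theta(\pi)\,N(a)\,\varpi^\lambda(\omega_{s,s'})$, which gives the base case $m=1$ and already exhibits the factor $\Theta(\pi)$.

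For the inductive step at distance $m+1$, I would split $[s,t]$ at two successive points $s<u<u'<t$ chosen (as in Theorem~\ref{thm:1}, via super-additivity of $\omega$) so that $\omega_{s,u}\le\omega_{s,t}/2$ and $\omega_{u',t}\le\omega_{s,t}/2$. Using the \emph{flow property} $\psi_{t,s}=\psi_{t,u'}\circ\psi_{u',u}\circ\psi_{u,s}$ and the semiflow relation \eqref{eq:30} for $\phi^\pi$ at the partition point $u$ (resp.\ $u'$), together with the triangle inequality, the Lipschitz bound $\normlip{\psi_{t,\cdot}}\le 1+\delta_T$, and $\Delta_{N,\varpi^\lambda}$-control of $\psi$ versus $\phi^\pi$ on the two pieces, one obtains a recursion of the shape
\[
V_{s,t}(a)\le N(a)\Bigl((1+\delta_T)\bigl(V\text{-terms on }[s,u]\text{ and }[u',t]\bigr)+\text{(error from }\psi\text{ vs }\phi^\pi\text{ at one step)}\Bigr).
\]
Since $\varpi^\lambda$ satisfies \eqref{eq:h4} with constant $\varkappa_\lambda=2^{1-\lambda}\varkappa^\lambda<1$, and with $T$ small enough that $\varkappa_\lambda(1+\delta_T)<1$, the two half-pieces each contribute a factor $\le\varkappa_\lambda/2$, so the coefficient of $A$ on the right is $\le(1+\delta_T)\varkappa_\lambda<1$; one then solves for $A$, getting a constant of the form $A=C/(1-(1+\delta_T)\varkappa_\lambda)$ where $C$ collects $\Delta_{N,\varpi}(\phi,\psi)$, $\varpi^{1-\lambda}(\omega_{0,T})$ and the one-step error, all of which are $O(\Theta(\pi))$. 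This closes the induction over $\pi^+_2$; continuity of $(s,t)\mapsto V_{s,t}(a)$ and $(s,t)\mapsto\varpi^\lambda(\omega_{s,t})$ then upgrades the bound to all $(s,t)\in\rTT_2$ by adding $s,t$ to $\pi$, which is exactly $\Delta_{N,\varpi^\lambda}(\psi,\phi^\pi)\le K\Theta(\pi)$. Finally, $\Theta(\pi)=\sup_{d_\pi(s,s')=1}\varpi^{1-\lambda}(\omega_{s,s'})\to0$ as $\abs{\pi}\to0$ by continuity of $\varpi$ and of $\omega$ near the diagonal, giving convergence of $\phi^\pi$ to $\psi$.

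The main obstacle is bookkeeping the error introduced by replacing $\psi$ by $\phi$ at a single successive-point step inside the recursion, and making sure that step's contribution genuinely carries a $\Theta(\pi)$ and not merely $\varpi^\lambda(\omega_{s,t})$: this is where the trick $\varpi(\delta)=\varpi^{1-\lambda}(\delta)\varpi^\lambda(\delta)\le\varpi^{1-\lambda}(\omega_{0,T})\varpi^\lambda(\delta)$ and the definition \eqref{eq:theta} of $\Theta(\pi)$ must be deployed carefully, exactly as in the passage from $\varpi$ to $\varpi^\lambda$ used in \eqref{eq:42}. A secondary point is that one must know $\Delta_{N,\varpi^\lambda}(\psi,\phi^\pi)<+\infty$ to begin the argument — this follows since $\psi\sim\phi$ and $\phi^\pi\sim\phi$ (the Corollary to Theorem~\ref{thm:1}), so $\psi$ and $\phi^\pi$ lie in the same galaxy and \eqref{eq:42} applies — so the induction is legitimately propagating a finite quantity.
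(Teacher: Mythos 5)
Your proposal is correct and follows essentially the same route as the paper: the paper merely packages your induction as a general two-family comparison (Lemma~\ref{thm:2} under Hypothesis~\ref{hyp:uniqueness:pi}, applied to the pair $(\psi,\phi^\pi)$ with $\beta_\psi=\beta_\chi=0$), where the one-step quantity $\gamma(\pi)\leq C\Theta(\pi)$ plays exactly the role of your base case. The only bookkeeping you leave implicit — controlling $N$ at the shifted evaluation point via $N(\phi^\pi_{s,r}(a))\leq K_T N(a)$, i.e. \eqref{eq:thm1_KT} of Theorem~\ref{thm:1} (condition \eqref{eq:hyp:uniq:pi:2} in the paper) — is readily available, so the argument closes as you describe.
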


\begin{remark}
    In \cite{brault2,brault3}, we develop the notion of \emph{stable almost flow}
    around a necessary condition for an almost flow to be associated to a Lipschitz flow.
    Under such a condition, a stronger rate of convergence may be achieved 
    by taking $\Theta(\pi):=\sup_{d_\pi(s,s')=1}\varpi(\omega_{s,s'})/\omega_{s,s'}$
    \cite{brault3}.
\end{remark}

\begin{theorem}[Uniqueness of Lipschitz flows]
    \label{thm:8}
    If $\psi$ is a Lipschitz flow and $\chi$ is 
    a flow (not necessarily Lipschitz \textit{a priori}) 
    in the same galaxy as $\psi$, that is $\chi\sim\psi$, then $\chi=\psi$.
\end{theorem}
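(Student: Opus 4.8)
The plan is to show that two flows $\psi$ and $\chi$ in the same galaxy must coincide by exploiting the flow property to iterate the difference estimate over a partition, and then letting the mesh go to zero. Since $\chi\sim\psi$, there is a constant $C=\Delta_{N,\varpi}(\psi,\chi)<+\infty$, so that $d(\psi_{t,s}(a),\chi_{t,s}(a))\le CN(a)\varpi(\omega_{s,t})$ for all $(s,t)\in\rTT_2$ and $a\in\uV$. Fix $(s,t)\in\rTT_2$ and $a\in\uV$, and let $\pi=\{t_i\}$ be a partition of $[s,t]$ with $s=t_0<t_1<\dots<t_n=t$. The key device is the telescoping identity: writing $U_i:=d(\psi_{t,t_i}(\chi_{t_i,s}(a)),\psi_{t,t_{i+1}}(\chi_{t_{i+1},s}(a)))$ and using $\psi_{t,s}=\psi_{t,t_n}\circ\dots\circ\psi_{t_1,t_0}$ and $\chi_{t,s}=\chi_{t,t_{n-1}}\circ\dots$, by the triangle inequality $d(\psi_{t,s}(a),\chi_{t,s}(a))\le\sum_{i=0}^{n-1}U_i$.

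Next I would estimate each $U_i$. Using the flow property $\psi_{t,t_i}=\psi_{t,t_{i+1}}\circ\psi_{t_{i+1},t_i}$ and the Lipschitz bound $\normlip{\psi_{t,t_{i+1}}}\le 1+\delta_T$, we get
\begin{equation*}
U_i\le (1+\delta_T)\, d\bigl(\psi_{t_{i+1},t_i}(\chi_{t_i,s}(a)),\chi_{t_{i+1},s}(a)\bigr).
\end{equation*}
Since $\chi$ has the flow property, $\chi_{t_{i+1},s}(a)=\chi_{t_{i+1},t_i}(\chi_{t_i,s}(a))$, so the argument of $d$ compares $\psi_{t_{i+1},t_i}$ and $\chi_{t_{i+1},t_i}$ at the common point $b_i:=\chi_{t_i,s}(a)$; by the galaxy bound this is $\le CN(b_i)\varpi(\omega_{t_i,t_{i+1}})$. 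One then needs a uniform bound $N(b_i)\le K_T N(a)$; under Hypothesis~\ref{hypo:varpi} $N$ is bounded so this is immediate, and in general it follows because $\chi_{t_i,s}$ is a flow in the galaxy of an almost flow (use \eqref{eq:h1}-type control, or the hypothesis that $N$ is bounded in the relevant setting). Hence
\begin{equation*}
d(\psi_{t,s}(a),\chi_{t,s}(a))\le (1+\delta_T)\,C\,K_T\,N(a)\sum_{i=0}^{n-1}\varpi(\omega_{t_i,t_{i+1}}).
\end{equation*}

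The final step is to show $\sum_i\varpi(\omega_{t_i,t_{i+1}})\to 0$ as $\abs{\pi}\to 0$. This is the standard "remainder" argument: by superadditivity of $\omega$ and property \eqref{eq:h4}, $\varpi$ is superlinear in the sense that $\sum_i\varpi(\omega_{t_i,t_{i+1}})\le\Bigl(\sup_i\varpi(\omega_{t_i,t_{i+1}})/\omega_{t_i,t_{i+1}}\Bigr)\sum_i\omega_{t_i,t_{i+1}}\le\Bigl(\sup_i\varpi(\omega_{t_i,t_{i+1}})^{1-\lambda}\Bigr)\cdot\sum_i\varpi(\omega_{t_i,t_{i+1}})^{\lambda}$, and iterating \eqref{eq:h4} shows $\varpi(\delta)^\lambda$ is again a remainder with $\sum$ controlled, while $\sup_i\varpi(\omega_{t_i,t_{i+1}})^{1-\lambda}=\Theta(\pi)\to 0$ by continuity of $\omega$ near the diagonal. (Alternatively, invoke Theorem~\ref{thm:rate_conv} directly: $\Delta_{N,\varpi^\lambda}(\psi,\phi^\pi)\le K\Theta(\pi)\to0$, and apply it with $\phi$ replaced by $\chi$, noting $\chi^\pi=\chi$.) Either way the right-hand side tends to $0$, forcing $d(\psi_{t,s}(a),\chi_{t,s}(a))=0$, i.e. $\psi_{t,s}(a)=\chi_{t,s}(a)$ for all $a$ and $(s,t)$, so $\chi=\psi$. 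The main obstacle is the bookkeeping around the uniform bound on $N$ along the flow $\chi$ and making the telescoping/Lipschitz-iteration clean; once that is set up, the decay of $\sum\varpi(\omega_{t_i,t_{i+1}})$ is routine given \eqref{eq:h4}.
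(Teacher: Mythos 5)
Your argument is correct in its core and takes a genuinely different route from the paper. The paper proves Theorem~\ref{thm:8} by checking that the pair $(\psi,\chi)$ satisfies Hypothesis~\ref{hyp:uniqueness:pi} with $\beta_\psi=\beta_\chi=0$ and then applying Lemma~\ref{thm:2}, whose proof is a Davie-style induction on the distance between partition points, with the auxiliary exponent $\lambda$ (replacing $\varpi$ by $\varpi^\lambda$) and a smallness condition on $T$ to keep the constants from compounding; the conclusion there is $\Delta_{N,\varpi^\lambda}(\psi,\chi)\leq C\gamma(\pi)\to 0$. Your telescoping along the partition, with $z_i:=\psi_{t,t_i}\circ\chi_{t_i,s}(a)$ and the flow properties $\psi_{t,t_i}=\psi_{t,t_{i+1}}\circ\psi_{t_{i+1},t_i}$, $\chi_{t_{i+1},s}=\chi_{t_{i+1},t_i}\circ\chi_{t_i,s}$, is more elementary: the Lipschitz constant $1+\delta_T$ is used only once per increment (for the single outer map $\psi_{t,t_{i+1}}$), so nothing compounds, and no induction, no $\varpi^\lambda$ device and no smallness of $T$ are needed. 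This exploits that both $\psi$ and $\chi$ are exact flows, which is precisely the situation of Theorem~\ref{thm:8}. Your closing step is also fine in its first form: $\sum_i\varpi(\omega_{t_i,t_{i+1}})\leq\bigl(\sup_i\varpi(\omega_{t_i,t_{i+1}})/\omega_{t_i,t_{i+1}}\bigr)\,\omega_{s,t}$, and iterating \eqref{eq:h4} gives $\varpi(x)/x\to 0$ as $x\to 0$, so the right-hand side vanishes with the mesh. (The subsequent chain with $\varpi^{1-\lambda}$ and $\sum_i\varpi^\lambda$ is garbled as written — the second inequality does not follow from the first — and is unnecessary; likewise the parenthetical appeal to Theorem~\ref{thm:rate_conv} with $\phi$ replaced by $\chi$ is not licit, since that theorem requires $\chi$ to be an almost flow and a flow in the galaxy need not satisfy \eqref{eq:h1}--\eqref{eq:h3}.)

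The one substantive point to repair is the uniform bound $N(\chi_{t_i,s}(a))\leq K_T N(a)$. Neither of your justifications applies under the hypotheses of Theorem~\ref{thm:8} as stated: Hypothesis~\ref{hypo:varpi} (bounded $N$) is not in force in Section~\ref{sec:lipschitz}, and no almost flow is assumed to exist in the galaxy, so you cannot invoke \eqref{eq:h1}. A Lipschitz flow only controls space increments, not $d(\psi_{t,s}(a),a)$, so $N(\chi_{t,s}(a))$ cannot be bounded by $KN(a)$ from $\chi\sim\psi$ alone when $N$ is unbounded. To be fair, this is exactly the growth condition \eqref{eq:hyp:uniq:pi:2} that the paper itself places in Hypothesis~\ref{hyp:uniqueness:pi} and then asserts, rather than derives, in its proof of Theorem~\ref{thm:8}. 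So your argument is on the same footing as the paper's provided you state this growth control explicitly as a hypothesis, or derive it under an extra assumption (e.g.\ $N$ bounded, or the galaxy contains an almost flow, in which case \eqref{eq:h1}, the galaxy bound and the Lipschitz continuity of $N$ give $N(\chi_{t,s}(a))\leq (1+\normlip{N}(C\varpi(\omega_{0,T})+\delta_T))N(a)$). With that caveat made explicit, your proof is complete and arguably cleaner for this particular statement.
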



\begin{hypothesis}
    \label{hyp:uniqueness:pi}
Let us fix a partition $\pi$.
We consider $\psi$ and $\chi$ in $\cF(\uV)$ such that $\psi\sim\chi$ and 
for any $(r,s,t)\in\pi^3_+$, 
\begin{align}
    \label{eq:hyp:uniq:pi:1}
    \normlip{\psi_{t,s}}&\leq 1+\delta_T, \\
    \label{eq:hyp:uniq:pi:2}
    N(\chi_{t,s}(a))&\leq (1+\delta_T)N(a),\ \forall a\in\uV,\\
    \label{eq:hyp:uniq:pi:3}
    \Delta_{N}(\psi_{t,s}\circ \psi_{s,r},\psi_{t,r})&\leq \beta_\psi \varpi(\omega_{r,t}) 
    \text{ and }
    \Delta_{N}(\chi_{t,s}\circ \chi_{s,r},\chi_{t,r})\leq \beta_\chi \varpi(\omega_{r,t}),
\end{align}
for some constant $\beta_\chi,\beta_\phi\geq 0$.
\end{hypothesis}

\begin{remark}
    \label{rem:2}
In Hypothesis~\ref{hyp:uniqueness:pi}, the role of $\psi$ and $\chi$ are not exchangeable:
$\psi$ is assumed to be Lipschitz, there is no such requirement on $\chi$.
The reason of this dissymmetry lies in \eqref{eq:43}.
\end{remark}

\begin{remark}
    \label{rem:1}
If $\psi$ is a Lipschitz almost flow and $\chi$ is an almost flow,
then $(\psi,\chi)$ satisfies Hypothesis~\ref{hyp:uniqueness:pi} for any partition $\pi$. The condition \eqref{eq:hyp:uniq:pi:2} is a particular case of \eqref{eq:thm1_KT}.
\end{remark}

We choose   $\lambda$ and $T$ so that
\begin{equation*}
    \frac{1}{1-\log_2(\varkappa)}<\lambda<1
    \text{ and }
    3\delta_T+\delta_T^3<2\frac{1-\varkappa_\lambda}{\varkappa_\lambda}.
\end{equation*}
We define (recall that $\Theta(\pi)$ is given by \eqref{eq:theta}),
\begin{align*}
    \rho_T&:=\varpi(\omega_{0,T})^{1-\lambda},\\
    \gamma(\pi)&:=\sup_{d_\pi(s,s')=1}\frac{\Delta_{N}(\psi,\chi)}{\varpi^\lambda(\omega_{s,s'})}\leq \Delta_{N,\varpi^\lambda}(\psi,\chi)\Theta(\pi),\\
    \beta(\pi)&:=(2+3\delta_T+\delta_T^2)(\beta_\psi+\beta_\chi)\rho_T
    +(1+\delta_T)^2\gamma(\pi)\geq \gamma(\pi),\\
    \text{ and }
    L(\pi)&:=\frac{2\beta(\pi)}{2-\varkappa_\lambda(2+3\delta_T+\delta_T^2)}\geq \gamma(\pi).
\end{align*}
Here, $\Theta(\pi)$ and thus $\gamma(\pi)$ converge to zero when the mesh of
$\pi$ tends to zero.
\begin{lemma}
    \label{thm:2}
    Let $\phi,\chi\in\cF(\uV)$ and $\pi$ be satisfying Hypothesis~\ref{hyp:uniqueness:pi}.
    With the above choice of $\lambda$ and $T$, it holds that 
    \begin{equation}
	\label{eq:41}
	d(\phi_{t,r}(a),\chi_{t,r}(a))
	\leq L(\pi\cup\Set{t,r})N(a)\varpi^\lambda (\omega_{r,t}),\ \forall (r,t)\in\TT_+^2. 
    \end{equation}
\end{lemma}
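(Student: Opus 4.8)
The natural strategy is an induction on the $\pi$-distance $m=d_\pi(t,r)$ between the two endpoints, exactly mirroring the induction used in the proof of Theorem~\ref{thm:1} and in Lemma~\ref{lem:sel:5}. Set $V_{r,t}(a):=d(\phi_{t,r}(a),\chi_{t,r}(a))$; we want $V_{r,t}(a)\leq L(\pi)N(a)\varpi^\lambda(\omega_{r,t})$ for all $(r,t)\in\pi^2_+$, after which the general case follows by enlarging $\pi$ to $\pi\cup\{r,t\}$. The base cases $m=0$ (trivial) and $m=1$ (successive points) are where the quantity $\gamma(\pi)$ enters: for successive points $V_{r,t}(a)\leq \gamma(\pi)N(a)\varpi^\lambda(\omega_{r,t})$ by definition of $\gamma(\pi)$, and $\gamma(\pi)\leq L(\pi)$ by the chain of inequalities preceding the lemma.

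**Inductive step.** For $m\geq 2$, pick as usual (via super-additivity of $\omega$, as in Theorem~\ref{thm:1}) two successive points $s<s'$ inside $(r,t)$ with $\omega_{r,s}\leq\omega_{r,t}/2$ and $\omega_{s',t}\leq\omega_{r,t}/2$. The idea is to insert $s$ and $s'$ and telescope:
\begin{equation*}
    V_{r,t}(a)\leq d(\phi_{t,r}(a),\phi_{t,s}\circ\phi_{s,r}(a))
    + d(\phi_{t,s}\circ\phi_{s,r}(a),\chi_{t,s}\circ\chi_{s,r}(a))
    + d(\chi_{t,s}\circ\chi_{s,r}(a),\chi_{t,r}(a)).
\end{equation*}
The first and third terms are bounded by $\beta_\phi\varpi(\omega_{r,t})$ and $\beta_\chi\varpi(\omega_{r,t})$ using \eqref{eq:hyp:uniq:pi:3}; converting $\varpi$ to $\varpi^\lambda$ costs a factor $\rho_T=\varpi(\omega_{0,T})^{1-\lambda}$. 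For the middle term one splits further through $\phi_{t,s}\circ\chi_{s,r}(a)$: the outer piece $d(\phi_{t,s}\circ\phi_{s,r}(a),\phi_{t,s}\circ\chi_{s,r}(a))$ is controlled by the Lipschitz bound $\normlip{\phi_{t,s}}\leq 1+\delta_T$ from \eqref{eq:hyp:uniq:pi:1} times $V_{r,s}(a)$, while $d(\phi_{t,s}\circ\chi_{s,r}(a),\chi_{t,s}\circ\chi_{s,r}(a))=V_{s,t}(\chi_{s,r}(a))\leq L(\pi)N(\chi_{s,r}(a))\varpi^\lambda(\omega_{s',t})$ — here one must first telescope from $t$ down to $s'$ using, at each of the finitely many successive-point steps, that $\phi_{s'',s}\circ\chi_{s,r}=$ a composition that reduces the endpoint-distance, so that the inductive hypothesis applies at distance $\leq m-1$; then \eqref{eq:hyp:uniq:pi:2} gives $N(\chi_{s,r}(a))\leq(1+\delta_T)N(a)$. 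Collecting all terms and using $\omega_{r,s},\omega_{s',t}\leq\omega_{r,t}/2$ together with \eqref{eq:h4} for $\varpi^\lambda$ (so $2\varpi^\lambda(\delta/2)\leq\varkappa_\lambda\varpi^\lambda(\delta)$) yields
\begin{equation*}
    V_{r,t}(a)\leq\Big(L(\pi)\tfrac{\varkappa_\lambda}{2}(2+3\delta_T+\delta_T^2)+\beta(\pi)\Big)N(a)\varpi^\lambda(\omega_{r,t}),
\end{equation*}
and the definition of $L(\pi)$ is precisely engineered so that the bracket equals $L(\pi)$, closing the induction. The constraint $3\delta_T+\delta_T^3<2(1-\varkappa_\lambda)/\varkappa_\lambda$ (equivalently $\varkappa_\lambda(2+3\delta_T+\delta_T^2)<2$, after the algebra) is what makes the denominator of $L(\pi)$ positive.

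**Main obstacle.** The delicate point is the bookkeeping in the middle term: one cannot directly apply the inductive hypothesis to $V_{s,t}(\chi_{s,r}(a))$ because $s$ and $t$ may still be at distance $m$, not $<m$. The resolution — peeling off successive points one at a time from the $t$-end, using that $\phi$ restricted to $\pi$ satisfies the flow-like decomposition only through $\phi^\pi$, not $\phi$ itself, so that the relevant reduction is in terms of the already-established Theorem~\ref{thm:1}-type estimates combined with $N(\chi_{s,r}(a))\leq(1+\delta_T)N(a)$ — is exactly the step where the asymmetry between $\psi$ (Lipschitz) and $\chi$ (not) is used, as flagged in Remark~\ref{rem:2}. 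Everything else is the same super-additive-splitting-plus-geometric-series mechanism already deployed twice in the paper.
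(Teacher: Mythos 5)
Your proposal follows essentially the same route as the paper's proof: the same decomposition through $\phi_{t,s}\circ\chi_{s,r}(a)$ using \eqref{eq:hyp:uniq:pi:1}--\eqref{eq:hyp:uniq:pi:3}, the same induction on the $\pi$-distance with the superadditive choice of successive points $s,s'$ satisfying \eqref{eq:38}, and the same constants $\gamma(\pi)$, $\beta(\pi)$, $L(\pi)$. The paper resolves the obstacle you flag more simply than your description suggests: it works with the $N$-normalised supremum $F_{r,t}=\Delta_{N}(\psi_{t,r},\chi_{t,r})$, so the shifted point $\chi_{s,r}(a)$ is absorbed by \eqref{eq:hyp:uniq:pi:2}, and it just applies the resulting three-point inequality \eqref{eq:37} a second time to the triple $(s,s',t)$, bounding the successive-point piece by $\gamma(\pi)$ and the piece $(s',t)$ by the induction hypothesis --- no telescoping over many successive-point steps and no appeal to Theorem~\ref{thm:1} is needed.
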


\begin{proof}
We set $F_{r,t}\eqdef\Delta_{N}(\psi_{t,r},\chi_{t,r})$,
where $\Delta_N$ is defined in Notation~\ref{not:norms}.
From Definition~\ref{def_galaxy}, $F_{r,t}\leq \Delta_{N,\varpi}(\psi,\chi)\varpi(\omega_{r,t})<+\infty$ 
since $\psi\sim\chi$.

In particular, for $(r,s,t)\in\pi^3_+$, with \eqref{eq:hyp:uniq:pi:2} in Hypothesis~\ref{hyp:uniqueness:pi},
\begin{equation}
    \label{eq:36}
d(\psi_{t,s}\circ \chi_{s,r}(a),\chi_{t,s}\circ\chi_{s,r}(a))
\leq F_{s,t} N(\chi_{s,r}(a))\leq (1+\delta_T)N(a) F_{s,t}.
\end{equation}

For any $(r,s,t)\in\pi^+_3$, the fact that $\phi$, $\chi$ are almost flow combined with \eqref{eq:hyp:uniq:pi:1}-\eqref{eq:hyp:uniq:pi:3} and \eqref{eq:36} imply that 
for any $a\in\uV$, 
\begin{multline}
    \label{eq:43}
d(\psi_{t,r}(a),\chi_{t,r}(a))\\
\leq d(\psi_{t,s}\circ \psi_{s,r}(a),\psi_{t,s}\circ\chi_{s,r}(a))
+d(\psi_{t,s}\circ \chi_{s,r}(a),\chi_{t,s}\circ\chi_{s,r}(a))
+(\beta_\phi+\beta_\chi) N(a)\varpi(\omega_{r,t})\\
\leq (1+\delta_T)N(a)F_{r,s}+(1+\delta_T)N(a)F_{s,t}+(\beta_\chi+\beta_\phi) N(a)\varpi(\omega_{r,t}).
\end{multline}
Thus, dividing by $N(a)$,
\begin{equation}
    \label{eq:37}
    F_{r,t}\leq (1+\delta_T)(F_{r,s}+F_{s,t})+(\beta_\chi+\beta_\psi)\rho_T\varpi^\lambda(\omega_{r,t}).
\end{equation}

We proceed by induction. Our hypothesis is that 
\begin{align}
\label{eq:induction_hyp}
F_{r,t}\leq L(\pi)\varpi^\lambda(\omega_{r,t}),
\forall(r,t)\in\pi^+_2,~\text{at distance at most } m.
\end{align}

When $m=0$, $F_{r,r}=0$ since $\psi_{r,r}(a)=\chi_{r,r}(a)=a$ for any $a\in\uV$. Thus \eqref{eq:induction:1} is true for $m=0$.
When $m=1$, $r$ and $t$ are successive points. From the very definition
of $\gamma(\pi)$, 
\begin{equation}
    \label{eq:40}
    F_{r,t}\leq \gamma(\pi)\varpi^\lambda(\omega_{r,t}).
\end{equation}
The induction hypothesis~\eqref{eq:induction:1} is true for $m=1$ since $L(\pi)\geq \gamma(\pi)$.

Assume that the induction hypothesis is true at some level $m\geq 1$. 
Let $(r,s,t)\in\pi^+_3$ with $r<s<t$ and $d_\pi(r,t)=m+1$. 
Let $s'$ be such that $s$ and $s'$ are successive points in $\pi$
(possibly, $s'=t$). Clearly, $d_\pi(r,s)\leq m$  and $d_\pi(s',t)\leq m$.
Using \eqref{eq:37} to decompose $F_{s,t}$ using $s'$ and using \eqref{eq:40}, 
\begin{multline*}
    F_{r,t}\leq 
    (1+\delta_T)F_{r,s}
    +(1+\delta_T)^2F_{s',t}
    +(1+\delta_T)^2\gamma(\pi)\varpi^\lambda(\omega_{s,s'})\\
+(2+3\delta_T+\delta_T^2)(\beta_\psi+\beta_\chi)\rho_T\varpi^\lambda(\omega_{r,t})
\\
\leq (1+\delta_T)F_{r,s}+(1+\delta_T)^2F_{s',t}+\beta(\pi)\varpi^\lambda(\omega_{r,t}).
\end{multline*}
With the induction hypothesis, since $r$ and $s$ (resp. $s'$ and $t$) are at
distance at most $m$, 
\begin{equation*}
    F_{r,t}\leq L(\pi)(1+\delta_T)\varpi^\lambda(\omega_{r,s})+L(\pi)(1+\delta_T)^2\varpi^\lambda(\omega_{s',t})
    +\beta(\pi)\varpi^\lambda(\omega_{r,t}).
\end{equation*}
Choosing $s$ and $s'$ to satisfy \eqref{eq:38}, our choice of $L(\pi)$
and \eqref{eq:h4} imply that 
\begin{equation*}
    F_{r,t}\leq \left(L(\pi)\frac{2+3\delta_T+\delta_T^2}{2}\varkappa_\lambda+\beta(\pi)\right)\varpi^\lambda(\omega_{r,t})
    \leq L(\pi)\varpi^\lambda(\omega_{r,t}).
\end{equation*}
The induction hypothesis~\eqref{eq:induction_hyp} is then true at level $m+1$, and then whatever the distance
between the points of the partition.

Finally, \eqref{eq:41} is obtained by replacing $\pi$ by $\pi\cup\Set{r,t}$.
\end{proof}
\begin{proof}[Proof Theorem~\ref{thm:7}] Let $\sigma$ and $\pi$ be two partitions with $\pi\subset\sigma$. 
    We set $\psi\eqdef\phi^\sigma$ and $\chi\eqdef\phi^\pi$. 

    With Theorem~\ref{thm:1},
    \begin{equation*}
	\Delta_{N,\varpi}(\phi^\sigma,\phi^\pi)
	\leq 
	\Delta_{N,\varpi}(\phi^\sigma,\phi)
	+
	\Delta_{N,\varpi}(\phi^\pi,\phi)\leq 2L_T.
    \end{equation*}
    With \eqref{eq:42},
    $\Delta_{N,\varpi^\lambda}(\psi,\chi)\leq 2L_T\rho_T$,
    so that $\Set{\Delta_{N,\varpi^\lambda}(\psi,\chi)}_{\pi,\sigma}$ is bounded.

    Again with Theorem~\ref{thm:1}, $(\psi,\chi)$ satisfies Hypothesis~\ref{hyp:uniqueness:pi} for the subdivision $\pi$
    (up to changing $\delta_T$) with $\beta_\psi=\beta_\chi=0$.

    Hence, $L(\pi)=C\gamma(\pi)$ where 
    \begin{equation}
	\label{eq:cst:L}
	C:=\frac{2(1+\delta_T)^2}{(2-\varkappa_\lambda(2+3\delta_T+\delta_T^2)}.
    \end{equation}

    We may then rewrite \eqref{eq:41} as 
    \begin{align}
    \label{eq:cauchy}
	d(\phi^\sigma_{t,r}(a),\phi^\pi_{t,r}(a))\leq C\gamma(\pi\cup\Set{r,t})N(a)\varpi^\lambda(\omega_{r,t}).
    \end{align}
    Since $\gamma(\pi)$ decreases to $0$ as $\abs{\pi}$ decreases to $0$ 
    and $\abs{\pi\cup\Set{r,t}}\leq \abs{\pi}$, 
    it is easily shown that $\Set{\phi^\pi_{t,s}}_{\pi}$ forms a Cauchy net
    with respect to the nested partitions. Then, it does converges
    to a limit $\zeta_{s,t}(a)$. By Theorem~\ref{thm:2} and the continuity of $N$, $N(\zeta_{s,r})\leq K_T N(a)$. From the UL condition, $a\mapsto \zeta_{t,s}(a)$
    is Lipschitz continuous with $\normlip{\zeta_{t,s}}\leq 1+\delta_T$.

Moreover $\zeta$ does not depend on the subdivision $\pi$. Indeed, if
$\tilde{\pi}$ is another subdivision, we obtain with \eqref{eq:cauchy},
that $\{\phi^{\tilde{\pi}}\}_{\tilde{\pi}}$ converges to $\zeta$
when $|\tilde{\pi}|\rightarrow 0$.

Finally, if if $\Set{\pi_k}_{k\geq 0}$ is a family of nested partitions, and $(r,s,t)\in\rTT_3$,
\begin{align*}
\phi_{t,r}^{\pi_k\cup\{s\}}=\phi_{t,s}^{\pi_k}\circ\phi_{s,r}^{\pi_k}.
\end{align*}
Because $|\pi_k\cup \{s\}|\leq |\pi_k|$ and \eqref{eq:cauchy}, $\phi^{\pi_k\cup\{s\}}$ converges to $\zeta$ when $k\rightarrow +\infty$. Moreover, for any $a\in\uV$,
\begin{align*}
d(\zeta_{t,s}\circ\zeta_{s,r}(a),\phi_{t,s}^{\pi_k}\circ\phi_{s,r}^{\pi_k}(a))&\leq
d(\zeta_{t,s}\circ\zeta_{s,r}(a),\phi_{t,s}^{\pi_k}\circ\zeta_{s,r}(a))
+d(\phi_{t,s}^{\pi_k}\circ\zeta_{s,r}(a),\phi_{t,s}^{\pi_k}\circ\phi_{s,r}^{\pi_k}(a))\\
&\leq C\gamma(\pi_k\cup\Set{r,t})N(\zeta_{s,r}(a))\varpi^\lambda(\omega_{r,t})+(1+\delta_T)d(\zeta_{s,r}(a),\phi_{s,r}^{\pi_k}(a))\\
&\leq C\gamma(\pi_k\cup\Set{r,t})(1+K_T) N(a)\varpi(\omega_{r,t}),
\end{align*}
because $N(\zeta_{s,r}(a))\leq K_TN(a)$. So, $\{\phi_{t,s}^{\pi_k}\circ\phi_{s,r}^{\pi_k}\}_{\pi_k}$ converges uniformly to
$\zeta_{t,s}\circ\zeta_{s,r}$ when $m\rightarrow +\infty$.
Then, the flow property $\zeta_{t,s}\circ\zeta_{s,r}=\zeta_{t,r}$ holds.
\end{proof}
\begin{proof}[Proof Theorem~\ref{thm:rate_conv}]
    For a partition $\pi$, the pair $(\psi,\phi^\pi)$ satisfies
    Hypothesis~\ref{hyp:uniqueness:pi} for the subdivision $\pi$ with $\beta_\psi=\beta_\chi=0$. 
    As in the proof of Theorem~\ref{thm:7}
    (we have assumed for convenience that $\Delta_{N,\varpi}(\phi,\psi)\leq L_T$), 
    \begin{equation*}
	\Delta_{N,\varpi^\lambda}(\psi,\phi^\pi)\leq C\gamma(\pi)
	\leq 2CL_T\rho_T\Theta(\pi)
    \end{equation*}
    for $C$ given by \eqref{eq:cst:L}.
    This proves the result.
\end{proof}
\begin{proof}[Proof Theorem~\ref{thm:8}]
    For any partition $\pi$, $\psi$ and $\chi$ satisfy Hypothesis~\ref{hyp:uniqueness:pi}
    with $\beta_\psi=\beta_\chi=0$. Thus, 
    \begin{equation*}
	\Delta_{N,\varpi^\lambda}(\psi,\chi)\leq C\gamma(\pi)
    \end{equation*}
    with $C$ given by \eqref{eq:cst:L}. As $\gamma(\pi)$ decreases to $0$ 
    when $\abs{\pi}$ decreases to $0$, we obtain that $\psi=\chi$. 
\end{proof}
\begin{corollary}
    \label{cor:9}
    Let $\psi$ and $\chi$ be two almost flows with $\psi\sim\chi$
    and $\psi$ be Lipschitz.
    Then for $T$ small enough (in function of some $\lambda<1$, $\varkappa$ and $\delta$)
    \begin{equation*}
	\Delta_{N,\varpi}(\psi,\chi)
	\leq 
	    \frac{2(2+3\delta_T+\delta_T^2)(\beta_\psi+\beta_\chi)}
	    {2-\varkappa_\lambda(2+3\delta_T+\delta_T^2)}.
    \end{equation*}
\end{corollary}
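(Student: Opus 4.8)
The plan is to read the bound off from Lemma~\ref{thm:2} by letting the mesh of the partition tend to $0$ --- exactly as in the proofs of Theorems~\ref{thm:7}, \ref{thm:rate_conv} and \ref{thm:8}, the only difference being that here $\beta_\psi,\beta_\chi$ need not vanish, so the limit retains a constant term --- and then to pass from the resulting $\varpi^\lambda$-estimate to a genuine $\varpi$-estimate by letting $\lambda\uparrow1$.

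\textbf{Mesh limit.} Since $\psi$ is a Lipschitz almost flow and $\chi$ an almost flow, Remark~\ref{rem:1} shows that $(\psi,\chi)$ satisfies Hypothesis~\ref{hyp:uniqueness:pi} for every partition $\pi$, with $\beta_\psi,\beta_\chi$ the constants attached to \eqref{eq:h3}; and $\psi\sim\chi$ forces $\Delta_{N,\varpi^\lambda}(\psi,\chi)<+\infty$ through \eqref{eq:42}. Fix $(\lambda,T)$ admissible, as fixed before Lemma~\ref{thm:2}. That lemma gives, for every partition $\pi$ and every $(r,t)\in\rTT_2$, $\Delta_N(\psi_{t,r},\chi_{t,r})\le L(\pi\cup\{r,t\})\,\varpi^\lambda(\omega_{r,t})$. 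Now $\gamma(\pi)\le\Delta_{N,\varpi^\lambda}(\psi,\chi)\,\Theta(\pi)$ and $\Theta(\pi)=\sup_{d_\pi(s,s')=1}\varpi^{1-\lambda}(\omega_{s,s'})\to0$ as $\abs{\pi}\to0$, because $\varpi(0)=0$ by \eqref{eq:h4} and $\omega$ is continuous at the diagonal; hence $\beta(\pi)\to(2+3\delta_T+\delta_T^2)(\beta_\psi+\beta_\chi)\rho_T$, and since $\abs{\pi\cup\{r,t\}}\le\abs{\pi}$ we get $L(\pi\cup\{r,t\})\to\frac{2(2+3\delta_T+\delta_T^2)(\beta_\psi+\beta_\chi)}{2-\varkappa_\lambda(2+3\delta_T+\delta_T^2)}\,\varpi(\omega_{0,T})^{1-\lambda}$. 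Passing to the limit in the inequality above, for each fixed $(r,t)\in\rTT_2$,
\[
\Delta_N(\psi_{t,r},\chi_{t,r})\le\frac{2(2+3\delta_T+\delta_T^2)(\beta_\psi+\beta_\chi)}{2-\varkappa_\lambda(2+3\delta_T+\delta_T^2)}\,\varpi(\omega_{0,T})^{1-\lambda}\,\varpi(\omega_{r,t})^{\lambda}.
\]

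\textbf{From $\varpi^\lambda$ to $\varpi$; the main obstacle.} This estimate controls $\psi-\chi$ only in terms of $\varpi^\lambda$, and the comparison \eqref{eq:42} makes clear that a $\varpi^\lambda$-estimate is strictly weaker than a $\varpi$-estimate at small scales, so dividing through by $\varpi(\omega_{0,T})^{1-\lambda}$ is not legitimate; this conversion is the crux of the argument. I would get around it by noting that the displayed bound holds for \emph{every} admissible $\lambda$, and that if $T$ is admissible for some $\lambda_0$ then it is admissible for all $\lambda\in[\lambda_0,1)$ --- indeed $\varkappa_\lambda=2^{1-\lambda}\varkappa^\lambda$ decreases to $\varkappa$ as $\lambda\uparrow1$, so $(1-\varkappa_\lambda)/\varkappa_\lambda$ increases and the admissibility condition on $T$ only becomes easier. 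Fixing $(r,t)$ and letting $\lambda\uparrow1$ in the right-hand side, using $\varkappa_\lambda\to\varkappa$, $\varpi(\omega_{0,T})^{1-\lambda}\to1$ and $\varpi(\omega_{r,t})^{\lambda}\to\varpi(\omega_{r,t})$, we obtain $\Delta_N(\psi_{t,r},\chi_{t,r})\le\frac{2(2+3\delta_T+\delta_T^2)(\beta_\psi+\beta_\chi)}{2-\varkappa(2+3\delta_T+\delta_T^2)}\,\varpi(\omega_{r,t})$. Dividing by $\varpi(\omega_{r,t})$ and taking the supremum over $(r,t)\in\rTT_2$ bounds $\Delta_{N,\varpi}(\psi,\chi)$ by this last expression with $\varkappa$ in the denominator, and since $\varkappa\le\varkappa_\lambda$ this is no larger than the quantity claimed in the statement. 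Everything else --- the mesh-limit bookkeeping, the elementary facts $\varpi(0)=0$ and that $\omega$ is continuous at the diagonal, and the monotonicity making $T$ remain admissible as $\lambda\uparrow1$ --- is routine.
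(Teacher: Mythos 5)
Your proposal is correct and follows essentially the same route as the paper: invoke Remark~\ref{rem:1} so that $(\psi,\chi)$ satisfies Hypothesis~\ref{hyp:uniqueness:pi}, apply Lemma~\ref{thm:2}, let the mesh of the partition tend to $0$ (so $\gamma(\pi)\to0$ and only the $\beta_\psi+\beta_\chi$ term survives in $L(\pi)$), and then let $\lambda\uparrow 1$ to absorb the factor $\rho_T=\varpi(\omega_{0,T})^{1-\lambda}$ and pass from the $\varpi^\lambda$-bound to the stated $\varpi$-bound. Your extra observations (admissibility of $T$ is preserved as $\lambda\uparrow1$ since $\varkappa_\lambda$ decreases to $\varkappa$, and the resulting constant with $\varkappa$ is no larger than the one stated with $\varkappa_\lambda$) are accurate fillings-in of the paper's brief argument.
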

\begin{proof}
    With Remark~\ref{rem:1}, $(\psi,\chi)$ satisfies Hypothesis~\ref{hyp:uniqueness:pi}.
    Letting the mesh of the partition decreasing to $0$
    as the in proof of Theorem~\ref{thm:8}, and then letting
    $\lambda$ increasing to $1$ leads to the result.
\end{proof}

\section{Perturbations}
\label{sec:perturbation}
In this section, we consider the construction of an almost flow by perturbations
of existing ones. We assume that $\uV$ is a Banach space.

Let $\phi\in\cF(\uV)$ be an almost flow 
with respect to a function $N_\gamma$ such that $N_\gamma(a)\geq N_\gamma(0)\geq 1$.

\begin{notation}
    \label{not:phirst}
    For $\phi\in\cF(\uV)$ when $\uV$ is a Banach space, we write
   \begin{equation*}
       \phi_{t,s,r}(a):=\phi_{t,s}(\phi_{s,r}(a))-\phi_{t,r}(a).
   \end{equation*}
\end{notation}

\begin{definition}
    \label{def:perturb}
    Let $\epsilon\in\cF(\uV)$ such that 
     for any $(s,t)\in\rTT_2$, $a,b\in\uV$,
    \begin{gather}
	\label{eq:epsilon:1}
	\epsilon_{t,t}\equiv 0,\\
	\label{eq:epsilon:2}
	\abs{\epsilon_{t,s}(a)}\leq \lambda N_\gamma(a)\varpi(\omega_{s,t}),\\
	\label{eq:epsilon:3}
	\abs{\epsilon_{t,s}(b)-\epsilon_{t,s}(a)}\leq \eta(\omega_{s,t})\abs{b-a}^\gamma
    \end{gather}
    for some $\lambda\geq 0$. We say that $\epsilon$ is a \emph{perturbation}.
\end{definition}

\begin{proposition} 
    \label{prop:pert:1}
    If $\phi\in\cF(\uV)$ is an almost flow and $\epsilon\in\cF(\uV)$
    is a perturbation, then $\psi\eqdef\phi+\epsilon$ is an almost flow. 
    Besides, $\psi\sim\phi$.
\end{proposition}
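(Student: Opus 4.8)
The plan is to verify the four defining conditions \eqref{eq:h0}--\eqref{eq:h3} of an almost flow directly for $\psi = \phi + \epsilon$, using that $\phi$ already satisfies them and that $\epsilon$ is small in the three senses of Definition~\ref{def:perturb}. First, $\psi \in \cF(\uV)$ since both $\phi$ and $\epsilon$ are continuous in $(s,t)$, and $\psi_{t,t}(a) = \phi_{t,t}(a) + \epsilon_{t,t}(a) = a + 0 = a$ by \eqref{eq:h0} for $\phi$ and \eqref{eq:epsilon:1}; this gives \eqref{eq:h0} for $\psi$. Next, for \eqref{eq:h1}, I would write
\begin{equation*}
    \abs{\psi_{t,s}(a) - a} \leq \abs{\phi_{t,s}(a) - a} + \abs{\epsilon_{t,s}(a)} \leq \delta_T N_\gamma(a) + \lambda N_\gamma(a) \varpi(\omega_{s,t}) \leq (\delta_T + \lambda \varpi(\omega_{0,T})) N_\gamma(a),
\end{equation*}
using \eqref{eq:h1} for $\phi$ and \eqref{eq:epsilon:2}; since $\varpi(\omega_{0,T}) \to 0$ as $T \to 0$, the right-hand constant can be absorbed into a (possibly enlarged but still vanishing-as-$T\to 0$) function $\delta_T$. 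For the Hölder/quasi-contractivity bound \eqref{eq:h2}, the triangle inequality combined with \eqref{eq:h2} for $\phi$ and \eqref{eq:epsilon:3} gives
\begin{equation*}
    \abs{\psi_{t,s}(a) - \psi_{t,s}(b)} \leq (1+\delta_T) \abs{a-b} + \eta(\omega_{s,t}) \abs{a-b}^\gamma + \eta(\omega_{s,t}) \abs{a-b}^\gamma,
\end{equation*}
so the coefficient of $\abs{a-b}^\gamma$ merely doubles, i.e. \eqref{eq:h2} holds with $\eta$ replaced by $2\eta$ (which still satisfies the structural hypothesis \eqref{eq:h_eta} up to adjusting $\delta_T$).

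The substantive step is \eqref{eq:h3}, the approximate flow property, because composing $\psi = \phi+\epsilon$ with itself produces cross terms that must each be controlled by $N_\gamma(a)\varpi(\omega_{r,t})$. Writing $\psi_{t,s}\circ\psi_{s,r}(a) = \phi_{t,s}(\phi_{s,r}(a) + \epsilon_{s,r}(a)) + \epsilon_{t,s}(\psi_{s,r}(a))$, I would expand $\psi_{t,s}\circ\psi_{s,r}(a) - \psi_{t,r}(a)$ as a sum of:
\begin{enumerate}[noitemsep]
    \item $\phi_{t,s}(\phi_{s,r}(a) + \epsilon_{s,r}(a)) - \phi_{t,s}(\phi_{s,r}(a))$, bounded via \eqref{eq:h2} for $\phi$ by $(1+\delta_T)\abs{\epsilon_{s,r}(a)} + \eta(\omega_{s,t})\abs{\epsilon_{s,r}(a)}^\gamma$, and then by \eqref{eq:epsilon:2} this is $\lesssim N_\gamma(a)\varpi(\omega_{s,t})$ plus an $\eta(\omega_{s,t})(\lambda N_\gamma(a)\varpi(\omega_{s,t}))^\gamma$ term, the latter handled using $N_\gamma \geq 1$ and the hypothesis \eqref{eq:h_eta} relating $\eta$, $\varpi^\gamma$ and $\delta_T$;
    \item $\phi_{t,s}(\phi_{s,r}(a)) - \phi_{t,r}(a) = \phi_{t,s,r}(a)$ (Notation~\ref{not:phirst}), bounded by $N_\gamma(a)\varpi(\omega_{r,t})$ via \eqref{eq:h3} for $\phi$;
    \item $\epsilon_{t,s}(\psi_{s,r}(a))$, bounded by $\lambda N_\gamma(\psi_{s,r}(a))\varpi(\omega_{s,t})$ using \eqref{eq:epsilon:2}; here I need $N_\gamma(\psi_{s,r}(a)) \leq C\, N_\gamma(a)$, which follows from the already-established \eqref{eq:h1} for $\psi$ together with the $\gamma$-Hölder continuity of $N_\gamma$ (the same computation as in the $m=1$ step of the proof of Theorem~\ref{thm:1}), and finally $\epsilon_{t,r}(a)$ contributes $\lambda N_\gamma(a)\varpi(\omega_{r,t})$ by \eqref{eq:epsilon:2}.
\end{enumerate}
Summing and using super-additivity of $\omega$ (so $\omega_{s,t}, \omega_{r,s} \leq \omega_{r,t}$) together with monotonicity of $\varpi$, every term is $\leq C\, N_\gamma(a)\varpi(\omega_{r,t})$ for a constant $C$ depending on $\lambda$, $\delta_T$ and $\normhold{\gamma}{N_\gamma}$. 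This verifies \eqref{eq:h3} for $\psi$ (after noting that $\varpi$ can always be rescaled by a constant while preserving \eqref{eq:h4}), so $\psi$ is an almost flow. The claim $\psi \sim \phi$ is then immediate: $d(\psi_{t,s}(a), \phi_{t,s}(a)) = \abs{\epsilon_{t,s}(a)} \leq \lambda N_\gamma(a)\varpi(\omega_{s,t})$ by \eqref{eq:epsilon:2}, which is exactly the condition $\Delta_{N_\gamma,\varpi}(\psi,\phi) < +\infty$ of Definition~\ref{def_galaxy}.

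I expect the main obstacle to be bookkeeping rather than conceptual: one must be careful that the enlargements of $\delta_T$ and $\eta$ at each step remain compatible with the structural hypotheses (in particular \eqref{eq:h_eta}, which couples $\eta$, $\varpi$, $\gamma$ and $\delta_T$) and still vanish as $T \to 0$, and that the factor $N_\gamma(\psi_{s,r}(a))$ appearing in term (3) is controlled before it is used — i.e. the verification of \eqref{eq:h1} for $\psi$ must precede that of \eqref{eq:h3}. Handling the mixed Hölder term $\eta(\omega_{s,t})\abs{\epsilon_{s,r}(a)}^\gamma$ cleanly is the one spot where the precise form of \eqref{eq:h_eta} is genuinely needed.
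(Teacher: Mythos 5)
Your proposal is correct and follows essentially the paper's own proof: \eqref{eq:h0}--\eqref{eq:h2} are verified in the same way (with $\delta_T+\lambda\varpi(\omega_{0,T})$ and $2\eta$), and for \eqref{eq:h3} you use the same decomposition into $\phi_{t,s,r}(a)$, the $\phi$-increment at the perturbed point $\phi_{s,r}(a)+\epsilon_{s,r}(a)$ controlled by \eqref{eq:h2}, \eqref{eq:epsilon:2} and \eqref{eq:h_eta}, and the remaining $\epsilon$-terms. The only (harmless) difference is that you bound $\epsilon_{t,s}\circ\psi_{s,r}(a)$ directly via \eqref{eq:epsilon:2} plus a growth bound on $N_\gamma(\psi_{s,r}(a))$, where the paper instead splits this term further using \eqref{eq:epsilon:3} and the bound $N_\gamma(\phi_{s,r}(a))\leq K_T N_\gamma(a)$; both give the required $CN_\gamma(a)\varpi(\omega_{r,t})$ estimate.
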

\begin{proof}
Let $(r,s,t)\in\rTT_3$ and $a,b\in\uV$.
   From \eqref{eq:epsilon:1}, \eqref{eq:h0} is satisfied. 
   With $\delta'_T\eqdef\delta_T+\lambda\varpi(\omega_{0,T})$, 
   \eqref{eq:h1} is also true.    In addition, with \eqref{eq:epsilon:3}, 
   \begin{align*}
   \abs{\psi_{t,s}(b)-\psi_{t,s}(a)}\leq (1+\delta_T)\abs{b-a}+2\eta(\omega_{s,t})\abs{b-a}^\gamma. 
   \end{align*}
   Thus, $\psi$ satisfies \eqref{eq:h2}.

To show~\eqref{eq:h3}, we write
\begin{multline}
\psi_{t,s,r}(a)
=\phi_{t,s}\circ\psi_{s,r}(a)+\epsilon_{t,s}\circ\psi_{s,r}(a)-\phi_{t,r}(a)-\epsilon_{t,s}(a)\\
=\underbrace{\phi_{t,s,r}(a)}_{\run_{r,s,t}}+\underbrace{\phi_{t,s}\circ(\phi_{s,r}+\epsilon_{s,r})(a)-\phi_{t,s}\circ\phi_{s,r}(a)}_{\rdeux_{r,s,t}}\\ 
+\underbrace{\epsilon_{t,s}\circ(\phi_{s,r}(a)+\epsilon_{s,r}(a))-\epsilon_{t,s}\circ\phi_{s,r}(a)}_{\rtrois_{r,s,t}}
+\underbrace{\epsilon_{t,s}\circ\phi_{s,r}(a)-\epsilon_{t,s}(a)}_{\rquatre_{r,s,t}}.
\label{eq:perturbation_decomposition}
\end{multline}
We control the first term with \eqref{eq:h3}, $\abs{\run_{r,s,t}}\leq N_\gamma(a)\varpi(\omega_{r,t})$. For the second one, we use \eqref{eq:h_eta}, \eqref{eq:h2} and \eqref{eq:epsilon:2},
\begin{multline*}
\abs{\rdeux_{r,s,t}}\leq (1+\delta_T)\abs{\epsilon_{s,r}(a)}+\eta(\omega_{s,r})\abs{\epsilon_{s,r}}^{\gamma}\\
\leq (1+\delta_T)\lambda N_\gamma(a)\varpi(\omega_{r,s})+\eta(\omega_{s,t})\lambda^\gamma N_\gamma^\gamma(a)
\varpi(\omega_{r,s})^\gamma\\
\leq [1+(1+\lambda^\gamma)\delta_T]N_\gamma(a)\varpi(\omega_{r,t}),
\end{multline*}
because $N_\gamma(a)\geq 1$ implies that $N_\gamma^\gamma(a)\leq N_\gamma(a)$ for $\gamma\in (0,1)$.

With \eqref{eq:epsilon:2} and \eqref{eq:epsilon:3}, we obtain for the third term,
\begin{equation*}
\abs{\rtrois_{r,s,t}}\leq \eta(\omega_{s,t})\abs{\epsilon_{s,r}(a)}^\gamma
\leq \lambda^\gamma N_\gamma(a)^\gamma\eta(\omega_{s,t})\varpi(\omega_{r,s})^\gamma
\leq \lambda^\gamma N_\gamma(a)\delta_T\varpi(\omega_{s,t}),
\end{equation*}
where the last inequality comes from \eqref{eq:h_eta}.
And for the last term, we use \eqref{eq:thm1_KT} and~\eqref{eq:epsilon:2},
\begin{multline*}
\abs{\rquatre_{r,s,t}}\leq \abs{\epsilon_{t,s}\circ\phi_{s,r}(a)}+\abs{\epsilon_{t,s}(a)}
\leq ( N_\gamma(\phi_{s,r})+N_{\gamma}(a))\lambda\varpi(\omega_{s,t})\\
\leq (K_T+1)N_\gamma(a)\lambda\varpi(\omega_{s,t}).
\end{multline*}
Thus, combining estimations for each four terms of \eqref{eq:perturbation_decomposition}, we obtain \eqref{eq:h3} which proves that $\psi$ is an almost flow.

Besides,
\begin{equation*}
\abs{\psi_{t,s}(a)-\phi_{t,s}(a)} = \abs{\epsilon_{t,s}(a)}
\leq \lambda N_\gamma(a)\varpi(\omega_{s,t}),
\end{equation*}
which proves that $\psi\sim \phi$ and concludes the proof.
\end{proof}

\section{Applications}
\label{sec:applications}
On this section, we show that our framework covers former different sewing lemmas.
\subsection{The additive sewing lemma}

The additive sewing lemma is the key to construct
the Young integral \cite{young36a} and the rough integral \cite{lyons98a,gub04}.

We consider that $\uV$ is a Banach space with a norm 
$\eucnorm{\cdot}$. The distance $d$ is $d(a,b):=\eucnorm{b-a}$.

\begin{definition}[Almost additive functional] A family $\Set{\alpha_{s,t}}_{(s,t)\in\rTT_2}$ 
    is an \emph{almost additive functional} if 
    \begin{equation*}
	\alpha_{r,s,t}\eqdef \alpha_{r,s}+\alpha_{s,t}-\alpha_{r,t}\text{ satisfies }
	\eucnorm{\alpha_{r,s,t}}\leq \varpi(\omega_{r,t}),\ \forall (r,s,t)\in\rTT_3.
    \end{equation*}
    It is an \emph{additive functional} if $\alpha_{r,s,t}=0$ for any $(r,s,t)\in\rTT_3$.
\end{definition}

\begin{proposition}[The additive sewing lemma \cite{lyons02b,feyel}] If $\Set{\alpha_{s,t}}_{(s,t)\in\rTT_2}$ 
    is an almost additive functional with $\eucnorm{\alpha_{s,t}}\leq \delta_T$, 
    there exists an additive functional 
    $\Set{\gamma_{s,t}}_{(s,t)\in\rTT_2}$ which is unique in the sense   that for any constant $C\geq 0$ and any additive functional 
    $\Set{\beta_{s,t}}_{(s,t)\in\rTT_2}$, 
    $\eucnorm{\beta_{s,t}-\alpha_{s,t}}\leq C\varpi(\omega_{s,t})$ implies that $\beta=\gamma$.
\end{proposition}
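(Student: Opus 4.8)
The plan is to recognize this as a consequence of the non-linear sewing machinery developed in the paper, specializing the general setup to the translation (affine) almost flows associated with an almost additive functional. Given an almost additive functional $\Set{\alpha_{s,t}}_{(s,t)\in\rTT_2}$ with $\eucnorm{\alpha_{s,t}}\leq\delta_T$, I would set $\phi_{t,s}(a):=a+\alpha_{s,t}$ on $\uV$ (or, more robustly for the flow property, on the augmented space $\RR\times\uV$ where a state records the running value), and check directly that $\phi$ is a Lipschitz almost flow: \eqref{eq:h0} is trivial, \eqref{eq:h1} holds because $\eucnorm{\alpha_{s,t}}\leq\delta_T\leq\delta_T N_\gamma(a)$ with $N\equiv 1$, \eqref{eq:h2} holds with equality and $\eta=0$ since $\phi_{t,s}$ is an isometry, and \eqref{eq:h3} is exactly $\eucnorm{\phi_{t,s}\circ\phi_{s,r}(a)-\phi_{t,r}(a)}=\eucnorm{\alpha_{r,s,t}}\leq\varpi(\omega_{r,t})$, which is the defining inequality for the almost additive functional.

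Next I would extract existence. Since $\phi_{t,s}(a)-a=\alpha_{s,t}$ does not depend on $a$, the iterated product $\phi^\pi_{t,s}(a)=a+\sum \alpha_{t_{i},t_{i+1}}$ along the partition is again a translation, and Theorem~\ref{thm:1} gives a uniform control $\eucnorm{\sum\alpha_{t_i,t_{i+1}}-\alpha_{s,t}}\leq L_T\varpi(\omega_{s,t})$ over all partitions. Because $\varpi(\omega_{s,t})\to 0$ as $t\downarrow s$ and $\varpi$ satisfies \eqref{eq:h4}, the standard sewing argument (or, equivalently, Theorem~\ref{thm:7}, noting the trivial UL condition $\normlip{\phi^\pi_{t,s}}=1$) shows $\Set{\phi^\pi}_\pi$ is a Cauchy net and converges to a Lipschitz flow $\zeta$; setting $\gamma_{s,t}:=\zeta_{t,s}(0)-0$ (the common translation vector) yields a genuinely additive functional, since the flow property $\zeta_{t,s}\circ\zeta_{s,r}=\zeta_{t,r}$ translates into $\gamma_{r,s}+\gamma_{s,t}=\gamma_{r,t}$, i.e. $\gamma_{r,s,t}=0$. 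One also reads off $\eucnorm{\gamma_{s,t}-\alpha_{s,t}}\leq L_T\varpi(\omega_{s,t})$ from the uniform bound.

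For uniqueness, suppose $\Set{\beta_{s,t}}$ is another additive functional with $\eucnorm{\beta_{s,t}-\alpha_{s,t}}\leq C\varpi(\omega_{s,t})$. Associate to $\beta$ the translation flow $\chi_{t,s}(a):=a+\beta_{s,t}$, which is a Lipschitz flow (it is genuinely a flow because $\beta$ is additive, and Lipschitz with constant $1$). Then $\chi\sim\zeta$: indeed $\eucnorm{\zeta_{t,s}(a)-\chi_{t,s}(a)}=\eucnorm{\gamma_{s,t}-\beta_{s,t}}\leq\eucnorm{\gamma_{s,t}-\alpha_{s,t}}+\eucnorm{\alpha_{s,t}-\beta_{s,t}}\leq(L_T+C)\varpi(\omega_{s,t})$, so $\Delta_{N,\varpi}(\zeta,\chi)<+\infty$ with $N\equiv 1$. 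Theorem~\ref{thm:8} (uniqueness of Lipschitz flows) then forces $\zeta=\chi$, hence $\gamma=\beta$. Finally, since everything has been done on a fixed interval $\itvcc{0,T}$ with $T$ chosen small in Theorem~\ref{thm:1}/\ref{thm:7}, I would patch the constructions on a finite cover of the original time horizon using the flow/additivity property to glue, which presents no difficulty.

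The only genuinely delicate point is bookkeeping rather than mathematics: the almost flows in the paper act on a metric space $\uV$ and the flow property is stated for maps $\uV\to\uV$, whereas an additive functional lives in the increments; the cleanest fix is to work on $\RR\times\uV$ with $\phi_{t,s}(c,a):=(c+\text{(dummy)},a+\alpha_{s,t})$ or simply to observe that translations of $\uV$ already form a group so the abstract theorems apply verbatim with $N\equiv 1$, $\eta\equiv 0$, $K_T=1$. Everything else is a direct quotation of Theorems~\ref{thm:1}, \ref{thm:7} and \ref{thm:8}.
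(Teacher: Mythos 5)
Your proposal is correct and is essentially the paper's own argument: the paper's proof consists precisely of observing that $\phi_{t,s}(a)=a+\alpha_{s,t}$ is an almost flow satisfying the UL condition (with $N\equiv 1$, $\eta\equiv 0$) and then invoking the Lipschitz-flow existence and uniqueness results (Theorems~\ref{thm:7} and~\ref{thm:8}). Your write-up simply spells out the verification of \eqref{eq:h0}--\eqref{eq:h3} and the translation between flows and increments; the detour through $\RR\times\uV$ is unnecessary but harmless.
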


\begin{proof}
Clearly, $\phi_{t,s}(a)=a+\alpha_{s,t}$ is an almost flow which satisfies the UL condition. Hence 
the result.
\end{proof}

\subsection{The multiplicative sewing lemma}

Here we recover the results of \cite{lyons98a,feyel,coutin-lejay1}.
We consider now that the metric space $\uV$ has a monoid structure:
there exists a product $ab\in\uV$ of two elements $a,b\in\uV$.
We also assume that there exists a Lipschitz function $N:\uV\to[1,+\infty)$
such that 
\begin{equation*}
d(ac,bc)\leq N(c)d(a,b)\text{ and }d(ca,cb)\leq N(c)d(a,b)
\text{ for all }a,b,c\in\uV.
\end{equation*}

\begin{definition} A family $\Set{\alpha_{s,t}}_{(s,t)\in\rTT_2}$ is said to be an \emph{almost multiplicative
	functional} if
    \begin{equation*}
    d(\alpha_{r,s}\alpha_{s,t},\alpha_{r,t})\leq \varpi(\omega_{r,t}),\ \forall (r,s,t)\in\TT_+^3.
    \end{equation*}
    It is a \emph{multiplicative functional} if $\alpha_{r,s}\alpha_{s,t}=\alpha_{r,t}$.
\end{definition}

\begin{proposition}[The multiplicative sewing lemma \cite{feyel}] 
    If $\Set{\alpha_{s,t}}_{(s,t)\in\TT_+^2}$
is an \emph{almost multiplicative functional} then there exists a unique 
multiplicative functional $\Set{\gamma_{s,t}}_{(s,t)\in\TT_+^2}$ such that
any other multiplicative functional $\Set{\gamma_{s,t}}_{(s,t)\in\TT_+^2}$ such
that $d(\beta_{s,t},\alpha_{s,t})\leq C\varpi(\omega_{s,t})$ for any $(s,t)\in\TT_+^2$ satisfies $\beta=\gamma$.
\end{proposition}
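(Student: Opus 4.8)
The plan is to reduce this statement to the already-established machinery for almost flows, exactly as was done for the additive sewing lemma. First I would set $\uV$ to be the monoid, and for an almost multiplicative functional $\{\alpha_{s,t}\}$ define the family $\phi_{t,s}(a)\eqdef a\,\alpha_{s,t}$ (acting by right multiplication). One checks that $\phi\in\cF(\uV)$: continuity in $(s,t)$ follows from continuity of $(s,t)\mapsto\alpha_{s,t}$, which one should assume or note is part of the definition of an almost multiplicative functional (as with the additive case, where $\eucnorm{\alpha_{s,t}}\leq\delta_T$ is imposed, here one assumes $d(\alpha_{s,t},e)\le\delta_T$ for a neutral-like element, or more simply that $\alpha$ is continuous and small in short time). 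Then $\phi_{t,t}(a)=a\alpha_{t,t}$, and after normalising so that $\alpha_{t,t}=e$ the identity, \eqref{eq:h0} holds.

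Next I would verify the remaining almost-flow axioms. For \eqref{eq:h1}, $d(\phi_{t,s}(a),a)=d(a\alpha_{s,t},ae)\le N(a)d(\alpha_{s,t},e)\le N(a)\delta_T$, using the hypothesis on $N$. For \eqref{eq:h2} with $\gamma=1$ and $\eta=0$: $d(\phi_{t,s}(a),\phi_{t,s}(b))=d(a\alpha_{s,t},b\alpha_{s,t})\le N(\alpha_{s,t})d(a,b)\le(1+\delta_T)d(a,b)$, again invoking the growth bound on $N$ and smallness of $\alpha_{s,t}$. For the crucial \eqref{eq:h3}: $\phi_{t,s}\circ\phi_{s,r}(a)=a\alpha_{r,s}\alpha_{s,t}$ while $\phi_{t,r}(a)=a\alpha_{r,t}$, so $d(\phi_{t,s}\circ\phi_{s,r}(a),\phi_{t,r}(a))\le N(a)\,d(\alpha_{r,s}\alpha_{s,t},\alpha_{r,t})\le N(a)\varpi(\omega_{r,t})$, which is precisely the almost multiplicative property. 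Thus $\phi$ is a Lipschitz almost flow; moreover since right multiplication by a product is left multiplication composed appropriately, the iterates $\phi^\pi$ remain of the form $a\mapsto a\gamma^\pi_{s,t}$ and are $(1+\delta_T)$-Lipschitz, so $\phi$ satisfies the UL condition of Theorem~\ref{thm:7}.

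Applying Theorem~\ref{thm:7} then gives a Lipschitz flow $\zeta$ with $\zeta\sim\phi$, obtained as the limit of $\phi^\pi$. Because each $\phi^\pi_{t,s}$ is right multiplication by the discrete product $\gamma^\pi_{s,t}\eqdef\alpha_{t_i,t_{i+1}}\cdots$ along the partition, the limit $\zeta_{t,s}$ is right multiplication by an element $\gamma_{s,t}\eqdef\zeta_{t,s}(e)$ (apply $\zeta_{t,s}$ to the neutral element and use that right multiplication is determined by its value on $e$ together with the monoid structure — or directly that $\phi^\pi_{t,s}(a)=a\gamma^\pi_{s,t}\to a\gamma_{s,t}$). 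The flow property $\zeta_{t,r}=\zeta_{t,s}\circ\zeta_{s,r}$ translates into $\gamma_{r,t}=\gamma_{r,s}\gamma_{s,t}$, i.e. $\gamma$ is a multiplicative functional, and $\zeta\sim\phi$ gives $d(\gamma_{s,t},\alpha_{s,t})\le C\varpi(\omega_{s,t})$. Uniqueness: if $\beta$ is another multiplicative functional with $d(\beta_{s,t},\alpha_{s,t})\le C\varpi(\omega_{s,t})$, then $\chi_{t,s}(a)\eqdef a\beta_{s,t}$ is a flow (the multiplicative property gives $\chi_{t,r}=\chi_{t,s}\circ\chi_{s,r}$ exactly), and $d(\chi_{t,s}(a),\phi_{t,s}(a))\le N(a)d(\beta_{s,t},\alpha_{s,t})\le C N(a)\varpi(\omega_{s,t})$ shows $\chi\sim\phi\sim\zeta$; by Theorem~\ref{thm:8} (uniqueness of Lipschitz flows) $\chi=\zeta$, hence $\beta=\gamma$.

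The main obstacle I anticipate is bookkeeping around the monoid structure: one must ensure there is a neutral element (or work with the family $\alpha_{s,s}$ directly and absorb it), confirm that the Lipschitz bound $\normlip{\phi^\pi_{t,s}}\le 1+\delta_T$ genuinely holds uniformly over partitions (this needs $N(\gamma^\pi_{s,t})\le 1+\delta_T$, which follows from super-additivity of $\omega$, \eqref{eq:h4}, and the bound $d(\gamma^\pi_{s,t},e)\le$ small, but must be checked — essentially it is the content of the UL condition and of Theorem~\ref{thm:1} specialised here), and verify that right multiplication is faithful enough that $a\gamma=a\gamma'$ for all $a$ forces $\gamma=\gamma'$ (take $a=e$). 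None of this is deep, but it is the only place where something could go wrong, so the proof as written — "$\phi_{t,s}(a)=a\alpha_{s,t}$ is an almost flow satisfying UL, hence the result" — is the right compression, with the verifications above left to the reader.
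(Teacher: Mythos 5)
Your proposal follows exactly the paper's route: the paper's entire proof is to observe that $\phi_{t,s}(a)=a\alpha_{s,t}$ is an almost flow satisfying the UL condition and then invoke the flow machinery (Theorems~\ref{thm:7} and~\ref{thm:8}), which is precisely your reduction, with the verifications of \eqref{eq:h0}--\eqref{eq:h3} and the extraction of $\gamma_{s,t}=\zeta_{t,s}(e)$ spelled out that the paper leaves implicit. Your account is correct and complete; no further comparison is needed.
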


\begin{proof}
    For this, it is sufficient to consider $\phi_{t,s}(a)=a\alpha_{s,t}$ which is an
    almost flow which satisfies the UL condition.
\end{proof}

\begin{remark}
    Actually, as for the additive sewing lemma (which is itself a subcase of the
    multiplicative sewing lemma), we have a stronger statement: No 
    (non-linear) flow satisfies $d(\psi_{t,s}(a),a\alpha_{s,t})\leq C\varpi(\omega_{s,t})$
    except $\Set{a\mapsto a\beta_{s,t}}_{(s,t)\in\TT_+^2}$ which is constructed
    as the limit of the products of the $\alpha_{s,t}$ over smaller and smaller intervals. 
\end{remark}

\subsection{The multiplicative sewing lemma in a Banach algebra}

Consider now that $\uV$ has a Banach algebra structure  with a norm $\eucnorm{\cdot}$
such that $\eucnorm{ab}\leq \eucnorm{a}\times\eucnorm{b}$ and a unit element $1$
(the product of two elements is still denoted by $ab$).

A typical example is the Banach algebra of bounded operators over a Banach space~$\uX$.

This situation fits in the multiplicative sewing lemma with $d(a,b)=\eucnorm{a-b}$
and $N(a)=\eucnorm{a}$, $a,b\in\uV$. As seen in \cite{coutin-lejay1}, we have many 
more properties: continuity, existence of an inverse, Dyson formula, Duhamel principle, ...

In particular, this framework is well suited for considering linear differential equations of type
\begin{equation*}
y_{t,s}=a+\int_s^t y_{r,s}\vd \alpha_r, a\in\uV,\ t\geq s\geq 0
\end{equation*}
for an operator valued path $\alpha:\rTT_2\to\uV$. If $\alpha$ is $\gamma$-Hölder with $\gamma>1/2$, 
then $\phi_{t,s}(a)=a(1+\alpha_{s,t})$ defines an almost flow which satisfies the condition UL 
(at the price of imposing some conditions on $\alpha$, this could be extended to $\gamma<1/2$).

Defining an ``affine flow'' $\phi_{t,s}(a)=a(1+\alpha_{s,t})+\beta_{s,t}$
where both $\alpha$ and $\beta$ are $\gamma$-Hölder with $\gamma>1/2$, the associated flow 
$\psi$ is such that $\psi_{t,s}(a)$ is solution to the perturbed equation
\begin{equation*}
    \psi_{t,s}(a)=a+\int_s^t \psi_{r,s}(a)\vd \alpha_r+\beta_{s,t}.
\end{equation*}
This gives an alternative construction to the one of \cite{coutin-lejay1} where
a \emph{backward integral} between $\beta$ and $\alpha$ was defined in the
style of the Duhamel formula. All these results are extended to the rough case $1/3<\gamma\leq 1/2$.

\begin{example}[Lyons extension theorem]
With the tensor product $\otimes$ as product and a suitable norm, 
for any integer $k$, the tensor algebra
\begin{equation*}
\uT_k(\uX)\eqdef\RR\oplus\uX\oplus\uX^{\otimes 2}\oplus\dotsb\oplus \uX^{\otimes k}
\end{equation*}
is a Banach algebra.
Chen series of iterated integrals (and then rough paths) take their values
in some space $\uT_k(\uX)$. The Lyons extension theorem states that 
any rough path $\bx$ of finite $p$-variation with values in $\uT_k(\uX)$ for some $k\geq \lfloor p \rfloor$
is uniquely extended to a rough path with values in $\uT_\ell(\uX)$ for any $\ell\geq k$,
which leads to the concept of \emph{signature}
\cite{lyons98a,lyons02b}.
This follows  $a\mapsto a\otimes \bx_{s,t}$
as an almost flow which satisfies the UL condition (see also~\cite{feyel}
and also~\cite{coutin-lejay1}).
\end{example}


\subsection{Rough differential equation}

Now, we show that our construction is related to the one of A.M. Davie
\cite{davie05a}. The main idea of Davie was to construct solutions as 
paths $y:\TT\to\uV$ that satisfies~\eqref{eq:def_davie} for a suitable
\textquote{algorithm} $\phi_{t,s}$ of the solution 
between $s$ and $t$. Solutions passing through $a$ in $0$ 
are then constructed as limit of 
using $\Set{\phi^\pi_{t,0}(a)}_{\pi}$ (See Proposition~\ref{prop:sel:1}).
The algorithm $\phi_{t,s}$ is given by a truncated
Taylor expansion of the solution of \eqref{eq:rde:3}.  The number of terms 
to consider in the Taylor expansion depends directly on the regularity of~$x$. In the Young
case one term is needed whereas in the rough case two terms are required.

In this section, we show that the algorithms provided in \cite{davie05a}
are almost flows under the same regularity on the vector field $f$
and the path $x$. Not only we recover existence of D-solutions, 
but we also show that measurable flows exist when 
the vector fields $f$ are~$\cCb^1$ (Young case) or~$\cCb^2$ (rough case)
in situation in which non-uniqueness of solutions is known to hold, 
again due to \cite{davie05a}, unless $f$ is of class $\cCb^{1+\gamma}$
(Young case) or $\cCb^{2+\gamma}$ (rough case).

Here $\uU$ and $\uV$ are two Banach spaces, where we use the same notation
$\abs{\cdot}$ for their norms. We denote by
$\cL(\uU,\uV)$ the continuous linear maps from $\uU$ to $\uV$. Let $f$ be a map
from $\uV$ to $\cL(\uU,\uV)$. If $f$ is regular, we denote its Fréchet derivative in
$a\in\uV$, $\vd f(a)\in \cL(V,\cL(U,V))$.

Moreover, for any $a\in \uW$ and $(r,s,t)\in\rTT_3$, we set $\phi_{t,s,r}(a):=\phi_{t,s}\circ\phi_{s,r}(a)-\phi_{t,r}(a)$.

\subsubsection{Almost flow in the Young case}

Let $x:\TT\rightarrow \uU$ be a path of finite $p$-variation controlled by $\omega$ with $1\leq p <2$.

We define a family $(\phi_{t,s})_{(s,t)\in\rTT_2}$ in $\cF(\uV)$ such that for all $a\in\uV$ and $(s,t)\in\rTT_2$,
\begin{equation}
\phi_{t,s}(a):= a+f(a)x_{s,t},
\end{equation}
where $x_{s,t}:=x_t-x_s$. 

\begin{proposition}
\label{prop:almost_flow_young_case}
Assume that $f\in\cC^{\gamma}(\uV,\cL(U,V))$, with $1+\gamma>p$. Then $\phi$ is an almost flow.
\end{proposition}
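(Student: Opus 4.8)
The strategy is to verify the four defining conditions \eqref{eq:h0}--\eqref{eq:h3} of an almost flow directly for $\phi_{t,s}(a)=a+f(a)x_{s,t}$, using only that $f\in\cC^\gamma$ (so $f$ is bounded as a function of $a$ only through $\normf{\gamma}{f}$ and its value at one point — in fact we will need $f$ bounded; typically in this setting one works with $\cCb^\gamma$, or on a ball, but I will follow the paper's convention and assume $\abs{f}$ and $\normf{\gamma}{f}$ are finite) and that $x$ has finite $p$-variation with $1+\gamma>p$. The function $N_\gamma$ should be chosen of the form $N_\gamma(a):=c(1+\abs{f(a)})$ or simply a constant multiple of $1+\abs{a}$, large enough to absorb the constants below; the point of the hypothesis $N_\gamma\ge 1$ is exactly to allow such normalising constants.

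\textbf{First, \eqref{eq:h0} and \eqref{eq:h1}.} The identity $\phi_{t,t}(a)=a+f(a)x_{t,t}=a$ is immediate since $x_{t,t}=0$. For \eqref{eq:h1}, $\abs{\phi_{t,s}(a)-a}=\abs{f(a)}\abs{x_{s,t}}\leq \abs{f(a)}\normp{x}\,\omega_{s,t}^{1/p}$, and since $\omega_{s,t}^{1/p}\leq \omega_{0,T}^{1/p}=:\delta_T\to 0$ as $T\to 0$, after normalising $N_\gamma$ to dominate $\abs{f(a)}\normp{x}$ we get $\abs{\phi_{t,s}(a)-a}\leq \delta_T N_\gamma(a)$ (adjusting the definition of $\delta_T$ by the constant $\normp{x}$, which is harmless).

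\textbf{Second, \eqref{eq:h2}.} Here $\phi_{t,s}(b)-\phi_{t,s}(a)=(b-a)+(f(b)-f(a))x_{s,t}$, so
$\abs{\phi_{t,s}(b)-\phi_{t,s}(a)}\leq \abs{b-a}+\normf{\gamma}{f}\abs{b-a}^\gamma\abs{x_{s,t}}\leq \abs{b-a}+\normf{\gamma}{f}\normp{x}\,\omega_{s,t}^{1/p}\abs{b-a}^\gamma$. Taking $\eta(\omega_{s,t}):=\normf{\gamma}{f}\normp{x}\,\omega_{s,t}^{1/p}$ gives \eqref{eq:h2} with the $(1+\delta_T)$ factor even being just $1$ in this case; one must also check that this $\eta$ satisfies the compatibility condition \eqref{eq:h_eta}, i.e. $\eta(\omega_{s,t})\varpi(\omega_{s,t})^\gamma\leq \delta_T\varpi(\omega_{s,t})$, which for $\varpi(\delta)=\delta^\theta$ with $\theta=\tfrac{1+\gamma}{p}>1$ amounts to $\omega_{s,t}^{1/p}\omega_{s,t}^{\gamma\theta}\leq \delta_T\,\omega_{s,t}^{\theta}$; since $1/p+\gamma\theta\geq \theta$ is equivalent to $1+\gamma\geq p\theta/ (\dots)$ — this is where the choice $\varpi(\delta)=\delta^{(1+\gamma)/p}$ and the inequality $1+\gamma>p$ are used, so the exponent on the left strictly exceeds $\theta$ and the excess power of $\omega_{0,T}$ supplies the $\delta_T$.

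\textbf{Third and main, \eqref{eq:h3}.} This is the heart of the matter: the defect
$\phi_{t,s,r}(a)=\phi_{t,s}(\phi_{s,r}(a))-\phi_{t,r}(a)=\bigl(f(a+f(a)x_{r,s})-f(a)\bigr)x_{s,t}$.
Using the $\gamma$-Hölder bound on $f$,
$\abs{\phi_{t,s,r}(a)}\leq \normf{\gamma}{f}\abs{f(a)x_{r,s}}^\gamma\abs{x_{s,t}}\leq \normf{\gamma}{f}\abs{f(a)}^\gamma\normp{x}^{1+\gamma}\,\omega_{r,s}^{\gamma/p}\omega_{s,t}^{1/p}\leq \normf{\gamma}{f}\abs{f(a)}^\gamma\normp{x}^{1+\gamma}\,\omega_{r,t}^{(1+\gamma)/p}$,
where the last step uses super-additivity $\omega_{r,s}+\omega_{s,t}\leq\omega_{r,t}$ together with $\omega_{r,s}^{\gamma/p}\omega_{s,t}^{1/p}\leq \omega_{r,t}^{(1+\gamma)/p}$ (a consequence of $a^{\gamma/p}b^{1/p}\le (a+b)^{(\gamma+1)/p}$ when all exponents are in $(0,1]$, which holds since $(1+\gamma)/p\le 1$ as $1<p$... more carefully one uses $a^\alpha b^\beta\le (a+b)^{\alpha+\beta}$ valid for $a,b\ge0$, $\alpha,\beta\ge0$, $\alpha+\beta\le 1$, or simply bounds each factor by $\omega_{r,t}$ to a suitable power). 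Choosing $\varpi(\delta):=\delta^{(1+\gamma)/p}$, which is a genuine remainder in the sense of \eqref{eq:h4} precisely because $(1+\gamma)/p>1$, and normalising $N_\gamma(a)$ to dominate $\normf{\gamma}{f}\abs{f(a)}^\gamma\normp{x}^{1+\gamma}$ (recall $N_\gamma\ge1$ so $\abs{f(a)}^\gamma$ is no obstruction), we obtain $\abs{\phi_{t,s,r}(a)}\leq N_\gamma(a)\varpi(\omega_{r,t})$, which is \eqref{eq:h3}. Finally one checks continuity of $(s,t)\mapsto\phi_{t,s}(a)$, which is clear since $x$ is continuous, so $\phi\in\cF(\uV)$; together with \eqref{eq:h0}--\eqref{eq:h3} this proves $\phi$ is an almost flow.

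\textbf{Anticipated obstacle.} The only real subtlety is bookkeeping of constants so that a single function $N_\gamma$ (and a single $\delta_T\to0$, a single remainder $\varpi$, a single $\eta$) simultaneously works in all four inequalities and is consistent with the standing hypotheses \eqref{eq:h_eta} relating $\eta$, $\varpi$ and $\delta_T$; this forces the specific choice $\varpi(\delta)=\delta^{(1+\gamma)/p}$ and uses the strict inequality $1+\gamma>p$ in an essential way (it is exactly what makes $(1+\gamma)/p>1$, hence $\varpi$ a legitimate remainder). If one insists on $f\in\cC^\gamma$ rather than $\cCb^\gamma$, one additionally restricts to a ball or lets $N_\gamma$ grow like $1+\abs{a}$; I expect the paper's proof to simply take $N_\gamma$ of the form $C(1+\abs{f(a)})$ or to work locally, and the argument is otherwise routine.
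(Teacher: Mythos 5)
Your proof is correct and follows essentially the same route as the paper: direct verification of \eqref{eq:h0}--\eqref{eq:h3} with the choices $\varpi(\delta)=\delta^{(1+\gamma)/p}$, $\eta(\omega_{s,t})=\normf{\gamma}{f}\normp{x}\omega_{s,t}^{1/p}$ and $N_\gamma(a)$ proportional to $1+\abs{f(a)}$, the strict inequality $1+\gamma>p$ making $\varpi$ a genuine remainder. Your hedge about boundedness of $f$ is unnecessary: since $N_\gamma$ is allowed to depend on $a$ (and is $\gamma$-Hölder because $f$ is), the paper takes exactly $N_\gamma(a)=(1+\abs{f(a)})(\normp{x}+\normf{\gamma}{f}\normp{x}^2)$ with no restriction to a ball.
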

\begin{proof}
We check that assumptions of Definition~\ref{def:almost_flow} hold. Let $(r,s,t)$ be in $\rTT_3$ and let $a,b$ be in $\uV$.
First, $\phi_{t,t}(a)=a$ because $x_{t,t}=0$.
Second, 
\begin{equation*}
    \abs{\phi_{t,s}(a)-a}
\leq \abs{f(a)}\cdot\abs{x_{s,t}}
\leq\abs{f(a)}\cdot \normp{x}\omega_{s,t}^{1/p},
\end{equation*}
which proves \eqref{eq:h1}. Third, 
\begin{equation*}
\abs{\phi_{t,s}(a)-\phi_{t,s}(b)}\leq \abs{a-b}+\abs{f(a)-f(b)}|x_{s,t}|
\leq \abs{a-b}+\normf{\gamma}{f}\normp{x}\omega_{s,t}^{1/p}\abs{a-b}^\gamma,
\end{equation*} 
which proves \eqref{eq:h2}. It remains to prove \eqref{eq:h3}.
Since 
\begin{align*}
\phi_{t,s,r}(a)=f(\phi_{s,r}(a))x_{s,t}-f(a)x_{s,t},
\end{align*}
we obtain 
\begin{multline*}
|\phi_{t,s,r}(a)|\leq \normf{\gamma}{f} \normp{x}\omega_{s,t}^{1/p}|\phi_{s,r}(a)-a|^\gamma
\leq \normf{\gamma}{f}\abs{f(a)}^\gamma\normp{x}^2 \omega_{r,t}^{(1+\gamma)/p}\\
\leq \normf{\gamma}{f}(1+\abs{f(a)})\normp{x}^2\omega_{r,t}^{(1+\gamma)/p}.
\end{multline*}
Setting $\varpi(\omega_{r,t}):=\omega_{r,t}^{(1+\gamma)/p}$, 
$\eta(\omega_{s,t}):=\normf{\gamma}{f}\normp{x}\omega_{s,t}^{1/p}$
and 
\begin{equation*}
N_\gamma(a):=\left(1+\abs{f(a)}\right)\left(\normp{x}+\normf{\gamma}{f}\normp{x}^2\right),
\end{equation*}
it proves that $\phi$ is an almost flow.

This concludes the proof.
\end{proof}

Let $\psi$ be a flow in the same galaxy as the almost flow $\phi$.
For any $a\in\uV$ and any $(r,t)\in\TT_+^2$, we set 
\begin{equation*}
    y_t(r,a):=\psi_{t,r}(a)\text{ so that }y_r(r,a)=a.
\end{equation*}
Clearly, $(r,a)\mapsto (t\in[r,T]\mapsto y_t(r,a))$ is a family of continuous paths 
which satisfies
\begin{equation*}
    \abs{y_t(r,a)-\phi_{t,r}(y_s(r,a))}\leq CN_\gamma(y_s(r,a))\omega_{s,t}^{2/p},\ \forall (s,t)\in\TT_+^2,\ \forall a\in\uV
\end{equation*}
since $y_t(r,a)=\psi_{t,s}(y_s(r,a))$. Besides, $s\in[r,T]\mapsto N_\gamma(y_s(r,a))$
is bounded. Therefore, with our choice of the almost flow $\phi$, 
$\psi_{\cdot,r}(a)=y(r,a)$ is a solution in the sense defined
by A.M.~Davie~\cite{davie05a} for the Young differential equation 
$z_t=a+\int_r^t f(z_s)\vd x_s$. Even if several solutions may exist 
for a given $(r,a)$, the flow corresponds to a particular choice
of a family of solutions which is constructed thanks to a selection 
principle. This family of solution is stable under splicing
(see Definition~\ref{def:splicing}).

\begin{corollary}
    We assume that $\uV$ is a finite-dimensional vector space and
    $f\in\cCb^{1}(\uV,\cL(\uU,\uV))$.
Then there exists 
    a flow $\psi\in\cF(\uV)$ in the same galaxy as $\phi$ such that 
    $\psi_{t,s}$ is Borel measurable
    for any $(s,t)\in\TT_+^2$.
\end{corollary}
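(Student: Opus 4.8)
The plan is to recognize this corollary as a direct instance of Theorem~\ref{thm:nls:1}: the only work is to check that when $f\in\cCb^{1}(\uV,\cL(\uU,\uV))$ the algorithm $\phi_{t,s}(a)=a+f(a)x_{s,t}$ is not merely an almost flow (Proposition~\ref{prop:almost_flow_young_case}) but a \emph{Lipschitz} almost flow with $N$ bounded, so that Hypothesis~\ref{hypo:varpi} is satisfied.

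First I would observe that $f\in\cCb^{1}$ means exactly that $f$ is bounded and that its Fréchet derivative $\vd f$ is bounded, hence $f$ is globally Lipschitz with $\normlip{f}\leq\normsup{\vd f}$. Rerunning the computations in the proof of Proposition~\ref{prop:almost_flow_young_case} with $\gamma=1$ (legitimate since $1+1>p$ for every $p\in[1,2)$), the genuine Lipschitz bound turns the Hölder term of \eqref{eq:h2} into a contribution to the linear term: one gets $\abs{\phi_{t,s}(a)-\phi_{t,s}(b)}\leq(1+\normlip{f}\normp{x}\omega_{s,t}^{1/p})\abs{a-b}$, so \eqref{eq:h2} holds with $\eta\equiv0$. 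The same proof gives \eqref{eq:h1} with $\delta_{T}$ a constant multiple of $\normsup{f}\normp{x}\omega_{0,T}^{1/p}$ (after relabelling), hence $\lim_{T\to0}\delta_{T}=0$; and the bound on $\phi_{t,s,r}(a)$ from that proof yields \eqref{eq:h3} with $\varpi(\delta)=\delta^{2/p}$ and $N(a)=(1+\abs{f(a)})(\normp{x}+\normlip{f}\normp{x}^{2})$. Since $f$ is bounded and Lipschitz, this $N$ is bounded and Lipschitz, and replacing it by $\max(N,1)$ (still bounded and Lipschitz) we may take $N\geq1$. Finally $\varpi(\delta)=\delta^{2/p}$ satisfies \eqref{eq:h4} with $\varkappa=2^{1-2/p}<1$ because $p<2$, and \eqref{eq:h_eta} is vacuous as $\eta\equiv0$. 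Thus $\phi$ is a Lipschitz almost flow in the sense of Definition~\ref{def:lipschitz:almost:flow} with $N$ bounded.

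Since $\delta_{T}\to0$ as $T\to0$, I would then fix a time horizon $T>0$ small enough that $\varkappa(1+\delta_{T})<1$; together with the finite-dimensionality of $\uV$ this means Hypothesis~\ref{hypo:varpi} holds. Theorem~\ref{thm:nls:1} then provides $\psi\in\cF(\uV)$ in the same galaxy as $\phi$, satisfying the flow property, with $\psi_{t,s}$ Borel measurable for every $(s,t)\in\TT_+^2$, which is precisely the assertion. The only genuinely load-bearing step — and the sole place the hypothesis $\cCb^{1}$ is used rather than mere $\cC^{\gamma}$ — is the verification that boundedness of $f$ and of $\vd f$ upgrades the almost flow of Proposition~\ref{prop:almost_flow_young_case} into one meeting the stronger requirements of Definition~\ref{def:lipschitz:almost:flow} with $N$ bounded; everything else is a direct invocation of Theorem~\ref{thm:nls:1}.
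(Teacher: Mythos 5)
Your proposal is correct and follows essentially the same route as the paper: invoke Proposition~\ref{prop:almost_flow_young_case}, note that $\gamma=1$ upgrades $\phi$ to a Lipschitz almost flow with $N$ bounded (so Hypothesis~\ref{hypo:varpi} holds), and apply Theorem~\ref{thm:nls:1}. You merely spell out details the paper leaves implicit (absorbing the $\eta$-term into $\delta_T$, boundedness and the normalization $N\geq 1$, and the choice of $T$ with $\varkappa(1+\delta_T)<1$), which is fine.
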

\begin{remark}
    When $f\in\cCb^\gamma$, 
    several D-solutions to the Young differential equation
    $y=a+\int_0^\cdot f(y_s(a))\vd x_s$ may exist (Example~1 in~\cite{davie05a}). 
    Uniqueness arises when $f\in\cCb^{1+\gamma}$ with $1+\gamma>p$. 
    Hence, a measurable flow may exist even when several D-solution may exist.
\end{remark}
\begin{proof}
According to Proposition~\ref{prop:almost_flow_young_case}, $\phi$ is an almost flow. Here $\gamma=1$, so $\phi$ is Lipschitz. Then, we conclude the proof in applying Theorem~\ref{thm:nls:1} to $\phi$.
\end{proof}

\subsubsection{Almost flow in the rough case}

When the regularity of $x$ is weaker than in the Young case,
we need more terms in the Taylor expansion to obtain an almost flow.

Let $T_2(\uU)\eqdef\RR\oplus\uU\oplus(\uU\otimes\uU)$
be the truncated tensor algebra (with addition $+$ and tensor product $\otimes$).
A distance is defined on the subset of elements of $T_2(\uU)$ 
on the form $a=1+a^1+a^2$ with $a^i\in\uU^{\otimes i}$ by 
$d(a,b)=\abs{a^{-1}\otimes b}$ where $\abs{\cdot}$
is a norm on $T_2(\uU)$ such that $\abs{a\otimes b}\leq \abs{a}\cdot\abs{b}$
for any $a,b\in\uU$.

Let $\bx=(1,\bx^1,\bx^2)$ be a rough path with values in $T_2(\uU)$ of finite $p$-variation, 
$2\leq p<3$, controlled by $\omega$ (see \textit{e.g.}, \cite{lyons02b,friz14a}
for a complete definition).

We define a family $(\phi_{t,s})_{(s,t)\in\rTT_2}$ in $\cF(\uV)$ such that for all $a\in\uV$ and $(s,t)\in\rTT_2$,
\begin{equation}
    \label{eq:rde:4}
\phi_{t,s}(a):=a+f(a)\bx_{s,t}^1+\vd f(a)\cdot f(a)\bx^{2}_{s,t}.
\end{equation}

\begin{proposition}
\label{prop:davie_rough}
Assume that $f\in\cCb^{1+\gamma}(\uV,\cL(\uU,\uV))$, with $2+\gamma>p$. 
Then $\phi$ is an almost flow.
\end{proposition}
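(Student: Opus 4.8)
The plan is to verify the four defining properties \eqref{eq:h0}--\eqref{eq:h3} of an almost flow for the map $\phi_{t,s}(a)=a+f(a)\bx^1_{s,t}+\dd f(a)\cdot f(a)\bx^2_{s,t}$, exactly as was done in the Young case (Proposition~\ref{prop:almost_flow_young_case}), the difference being that now the second-order term in the Taylor expansion must be kept because $\bx^1$ only has finite $p$-variation with $p<3$ and $\bx^2$ has finite $p/2$-variation. Throughout I would use the control estimates $\abs{\bx^1_{s,t}}\lesssim\normp{\bx}\omega_{s,t}^{1/p}$ and $\abs{\bx^2_{s,t}}\lesssim\normp{\bx}^2\omega_{s,t}^{2/p}$ coming from finite $p$-variation, and the fact that $f\in\cCb^{1+\gamma}$ gives $\abs{f}$, $\abs{\dd f}$ bounded and $\dd f$ being $\gamma$-Hölder, hence $f\cdot\dd f$ is bounded with a $\gamma$-Hölder derivative-type increment. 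Property \eqref{eq:h0} is immediate since $\bx^1_{t,t}=0$ and $\bx^2_{t,t}=0$. Property \eqref{eq:h1} follows from boundedness: $\abs{\phi_{t,s}(a)-a}\leq \abs{f}_\infty\abs{\bx^1_{s,t}}+\abs{\dd f}_\infty\abs{f}_\infty\abs{\bx^2_{s,t}}$, which is controlled by $\omega_{s,t}^{1/p}+\omega_{s,t}^{2/p}\lesssim\omega_{s,t}^{1/p}$ and can be absorbed into $\delta_T N_\gamma(a)$ for a constant $N_\gamma$.

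For \eqref{eq:h2}, I would write $\phi_{t,s}(a)-\phi_{t,s}(b)=(a-b)+\bigl(f(a)-f(b)\bigr)\bx^1_{s,t}+\bigl(\dd f(a)f(a)-\dd f(b)f(b)\bigr)\bx^2_{s,t}$; since $f\in\cCb^{1+\gamma}$, $f$ is Lipschitz and $a\mapsto \dd f(a)f(a)$ is Lipschitz as well (bounded derivative, as $\dd f$ is bounded and $\gamma$-Hölder, so Lipschitz, and $f$ bounded), so the last two terms are bounded by $C\abs{a-b}\bigl(\abs{\bx^1_{s,t}}+\abs{\bx^2_{s,t}}\bigr)$. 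This yields $\abs{\phi_{t,s}(a)-\phi_{t,s}(b)}\leq(1+\delta_T)\abs{a-b}$, so \eqref{eq:h2} holds even with $\eta=0$, i.e. $\phi$ is in fact a Lipschitz almost flow once $N_\gamma$ is chosen bounded — though one must be careful about whether the problem wants $\eta=0$; I would state it with an $\eta$ term if convenient and note the stronger fact.

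The main work, as in the Young case, is \eqref{eq:h3}, the approximate flow property. I would compute $\phi_{t,s,r}(a)=\phi_{t,s}(\phi_{s,r}(a))-\phi_{t,r}(a)$. Writing $b:=\phi_{s,r}(a)$ and using Chen's relation $\bx^1_{r,t}=\bx^1_{r,s}+\bx^1_{s,t}$ and $\bx^2_{r,t}=\bx^2_{r,s}+\bx^2_{s,t}+\bx^1_{r,s}\otimes\bx^1_{s,t}$, the zeroth- and first-order terms telescope up to a remainder, and what survives is of the form $\bigl(f(b)-f(a)-\dd f(a)(b-a)\bigr)\bx^1_{s,t}$ plus $\bigl(\dd f(b)f(b)-\dd f(a)f(a)\bigr)\bx^2_{s,t}$ plus a cross term $\dd f(a)f(a)\,\bx^1_{r,s}\otimes\bx^1_{s,t}$ that exactly cancels against $\dd f(a)\cdot\bigl(f(a)\bx^1_{r,s}\bigr)\,\bx^1_{s,t}$ coming from expanding $f(\phi_{s,r}(a))$. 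The first surviving term is $O(\abs{b-a}^{1+\gamma}\abs{\bx^1_{s,t}})$ by the $\gamma$-Hölder bound on $\dd f$ (Taylor with remainder), and since $\abs{b-a}=\abs{\phi_{s,r}(a)-a}\lesssim\omega_{r,s}^{1/p}$, this is $O(\omega_{r,t}^{(2+\gamma)/p})$; the second is $O(\abs{b-a}\abs{\bx^2_{s,t}})=O(\omega_{r,s}^{1/p}\omega_{s,t}^{2/p})=O(\omega_{r,t}^{3/p})$. Choosing $\varpi(\omega_{r,t})=\omega_{r,t}^{(2+\gamma)/p}$ (legitimate since $2+\gamma>p$ makes the exponent $>1$, and super-additivity of $\omega$ plus $3/p\geq(2+\gamma)/p$ handles the remaining term), and absorbing the boundedness constants of $f$, $\dd f$, $\normp{\bx}$ and $\normf{\gamma}{\dd f}$ into a constant $N_\gamma$ (which can be taken bounded, even constant), gives \eqref{eq:h3}. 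The delicate bookkeeping step — and the one I would expect to be the main obstacle — is the careful expansion using Chen's relation to see exactly which terms cancel and to confirm that every leftover remainder carries an exponent of $\omega_{r,t}$ at least $(2+\gamma)/p>1$; this is the rough-path analogue of the computation in \cite{davie05a}, and once the algebra is organized correctly the estimates are routine.
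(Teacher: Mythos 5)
Your proposal follows the same route as the paper's proof: check \eqref{eq:h0}--\eqref{eq:h3} directly, with \eqref{eq:h3} obtained by expanding $\phi_{t,s}\circ\phi_{s,r}-\phi_{t,r}$ through Chen's relation and a first-order Taylor expansion of $f$, which is exactly the paper's decomposition into the terms $\run_{r,s,t}$, $\rdeux_{r,s,t}$, $\rtrois_{r,s,t}$, and the same choice $\varpi(\omega_{r,t})=\omega_{r,t}^{(2+\gamma)/p}$ after factoring out $\omega_{0,T}^{(1-\gamma)/p}$ from the $3/p$-order terms. Two bookkeeping slips should be corrected. First, in the computation for \eqref{eq:h3} your cancellation claim double counts: if you keep the full Taylor remainder $f(b)-f(a)-\dd f(a)\cdot(b-a)$ with $b=\phi_{s,r}(a)$, then the Chen cross term $-\dd f(a)\cdot f(a)\,\bx^1_{r,s}\otimes\bx^1_{s,t}$ cancels only against the part $\dd f(a)\cdot\bigl(f(a)\bx^1_{r,s}\bigr)\otimes\bx^1_{s,t}$ of the linear term, since $b-a=f(a)\bx^1_{r,s}+\dd f(a)\cdot f(a)\bx^2_{r,s}$; the leftover piece $\dd f(a)\cdot\bigl(\dd f(a)\cdot f(a)\bx^2_{r,s}\bigr)\otimes\bx^1_{s,t}$ survives. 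It is the paper's term $\rdeux_{r,s,t}$, of order $\omega_{r,t}^{3/p}$, harmless for the final estimate but it must appear in your list and be bounded. Second, your justification for \eqref{eq:h2} with $\eta=0$ rests on the claim that $a\mapsto\dd f(a)\cdot f(a)$ is Lipschitz because $\dd f$ is $\gamma$-Hölder \emph{hence} Lipschitz; this implication is false for $\gamma<1$, and $\dd f\cdot f$ is in general only a sum of a Lipschitz and a $\gamma$-Hölder part, so \eqref{eq:h2} genuinely needs the term $\eta(\omega_{s,t})d(a,b)^\gamma$ (as in Proposition~\ref{prop:almost_flow_young_case}); the stronger statement that $\phi$ is a Lipschitz almost flow with $\eta=0$ only holds when $\gamma=1$, which is precisely the hypothesis $f\in\cCb^{2}$ used later in Corollary~\ref{cor:10}. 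With these two corrections your argument coincides with the paper's proof, and neither affects the validity of the conclusion that $\phi$ is an almost flow.
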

\begin{proof}
We check that the assumptions of Definition~\ref{def:almost_flow} hold.
The proofs of \eqref{eq:h0}, \eqref{eq:h1} and \eqref{eq:h2} are very similar 
to the ones in the proof of Proposition~\ref{prop:almost_flow_young_case}. The computation
to show \eqref{eq:h3} is a bit more involved.

Indeed, for any $a\in\uV$, $(r,s,t)\in\rTT_3$,
\begin{align*}
\phi_{t,s,r}(a)=&-f(a)\bx_{s,t}^1+f(\phi_{s,r}(a))\bx_{s,t}^1-\vd f(a)\cdot f(a)(\bx_{s,t}^2+\bx_{r,s}^1\otimes \bx_{s,t}^1)\\
&+\vd f(\phi_{s,r})\cdot f(\phi_{s,r}(a))\bx_{s,t}^2\\
=&[f(\phi_{s,t}(a))-f(a)-\vd f(a)\cdot f(a)\bx^1_{r,s}]\otimes \bx_{s,t}^1\\
&+[\vd f(\phi_{s,r}(a))\cdot f(\phi_{s,r}(a))-\vd f(a)\cdot f(a)]\bx^2_{s,t}\\
=&\underbrace{f(\phi_{s,t}(a))-f(a)-\vd f(a)\cdot (\phi_{s,r}(a)-a)}_{\run_{r,s,t}}+\underbrace{\vd f(a)\cdot f(a)\bx^2_{r,s}\otimes \bx^1_{s,t}}_{\rdeux_{r,s,t}}\\
&+\underbrace{[\vd f(\phi_{s,t}(a))\cdot f(\phi_{s,r}(a))-\vd f(a)\cdot f(a)]\bx^2_{s,t}}_{\rtrois_{r,s,t}}.
\end{align*}

For the first term,
\begin{align*}
|\run_{r,s,t}|&\leq  \normf{\gamma}{\vd f}\normp{\bx^1}\omega_{s,t}^{1/p}|\phi_{s,r}(a)-a|^{1+\gamma}\\
&\leq \normf{\gamma}{\vd f}\normp{\bx^1}\omega_{s,t}^{1/p} [\normsup{f}\normp{\bx^1}+
\normsup{\vd f\cdot f}\normpp{\bx^2}\omega_{0,T}^{1/p}]^{1+\gamma}
\omega_{r,s}^{(1+\gamma)/p}.
\end{align*}

For the two last terms,
\begin{equation*}
|\rdeux_{r,s,t}|\leq
\normsup{\vd f\cdot f}\normp{\bx^1}\normpp{\bx^2} \omega_{r,t}^{3/p}
\leq \normsup{\vd f\cdot f}\normp{\bx^1}\normpp{\bx^2}\omega_{0,T}^{(1-\gamma)/p} \omega_{r,t}^{(2+\gamma)/p}
\end{equation*}
and
\begin{equation*}
|\rtrois_{r,s,t}|\leq \normpp{\bx^2}\omega_{r,t}^{2/p}
    \left[\normf{\gamma}{\vd f}\normsup{f}\abs{\phi_{s,r}(a)-a}^\gamma+\normsup{\vd f}\normlip{f}
\abs{\phi_{s,r}(a)-a}\right]
\leq C\omega_{r,t}^{3/p}, 
\end{equation*}
where $C$ is a constant which depends on $f$, $\vd f$, $\omega$, $\gamma$, $\bx$. It proves that $\phi$ is a Lipschitz almost flow.

This concludes the proof.
\end{proof}

As for the Young case, any flow $\psi$ in the same galaxy
as the almost flow $\phi$ given by \eqref{eq:rde:4} gives
rise to a family of solutions to the RDE $z_t=a+\int_0^t f(z_s)\vd\bx_s$.

\begin{corollary}
    \label{cor:10}
    We assume that $\uV$ is a finite-dimensional vector space and
    $f\in\cCb^{2}(\uV,\cL(\uU,\uV))$.
    Then there exists 
    a flow $\psi\in\cF(\uV)$ in the same galaxy as $\phi$ such that $\psi_{t,s}$ is 
    Borel measurable for any $(s,t)\in\TT_+^2$.
\end{corollary}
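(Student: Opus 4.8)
The plan is to mirror the argument used in the Young case (the corollary following Proposition~\ref{prop:almost_flow_young_case}): recognise $\phi$ as a \emph{Lipschitz} almost flow with bounded $N$, and then invoke Theorem~\ref{thm:nls:1}. First I would note that $\cCb^{2}(\uV,\cL(\uU,\uV))=\cCb^{1+\gamma}(\uV,\cL(\uU,\uV))$ with $\gamma=1$, and that $2+\gamma=3>p$ since $p<3$; hence Proposition~\ref{prop:davie_rough} applies directly and $\phi$ defined by \eqref{eq:rde:4} is an almost flow.

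Next I would upgrade this to a Lipschitz almost flow in the sense of Definition~\ref{def:lipschitz:almost:flow}. Since $\gamma=1$, the Hölder term $\eta(\omega_{s,t})\abs{a-b}^\gamma$ in \eqref{eq:h2} is linear in $\abs{a-b}$ and should be absorbed into the leading term. Writing $\phi_{t,s}(a)-\phi_{t,s}(b)=(a-b)+(f(a)-f(b))\bx^1_{s,t}+(\vd f(a)\cdot f(a)-\vd f(b)\cdot f(b))\bx^2_{s,t}$ and using that both $f$ and $a\mapsto \vd f(a)\cdot f(a)$ are Lipschitz (as $f\in\cCb^2$), together with $\abs{\bx^1_{s,t}}\leq\normp{\bx^1}\omega_{s,t}^{1/p}$ and $\abs{\bx^2_{s,t}}\leq\normpp{\bx^2}\omega_{s,t}^{2/p}$, one finds that $\phi_{t,s}$ is Lipschitz with constant $1+C\omega_{s,t}^{1/p}\leq 1+\delta_T$ for $T$ small, so \eqref{eq:h2} holds with $\eta=0$. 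Moreover, inspecting the proof of Proposition~\ref{prop:davie_rough}, the function $N_\gamma$ there involves only $\normsup{f}$, $\normsup{\vd f}$, $\normsup{\vd f\cdot f}$, $\normlip{\vd f}$ and the $p$- and $p/2$-variation norms of $\bx$, all finite when $f\in\cCb^2$; hence $N$ may be taken to be a constant, in particular bounded and (trivially) Lipschitz.

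Then, since $\uV$ is finite-dimensional, fixing a time horizon $T>0$ small enough that $\varkappa(1+\delta_T)<1$ puts us precisely in the framework of Hypothesis~\ref{hypo:varpi}. Theorem~\ref{thm:nls:1} therefore produces $\psi\in\cF(\uV)$ in the same galaxy as $\phi$, satisfying the flow property, with $\psi_{t,s}$ Borel measurable for every $(s,t)\in\TT_+^2$. Combined with the discussion preceding the statement, $\psi_{\cdot,r}(a)$ then provides, for each $(r,a)$, a D-solution to the RDE $z_t=a+\int_r^t f(z_s)\vd\bx_s$, and the whole family is stable under splicing (Definition~\ref{def:splicing}).

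The main point requiring care is the second step: one must check that, after absorbing the $\bx^2$-contributions, the constant $N$ is genuinely bounded (indeed can be chosen constant) and that the Lipschitz constant of $\phi_{t,s}$ is at most $1+\delta_T$ uniformly in $(s,t)$. This is exactly where one uses the full strength of $f\in\cCb^2$ — that $f$, $\vd f$ \emph{and} $\vd^2 f$ are all bounded — rather than merely $f\in\cCb^{1+\gamma}$ with $\gamma<1$ (in which case $\phi$ is only an almost flow, with $\eta\neq 0$ and $N_\gamma$ possibly unbounded, and uniqueness of D-solutions, hence of the flow, may genuinely fail, as in Example~1 of~\cite{davie05a}).
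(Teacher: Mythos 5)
Your proposal is correct and follows essentially the same route as the paper: apply Proposition~\ref{prop:davie_rough} with $\gamma=1$, observe that $\phi$ is then a Lipschitz almost flow with $N$ bounded (so Hypothesis~\ref{hypo:varpi} holds for small $T$), and conclude via Theorem~\ref{thm:nls:1}. You merely spell out the verification (absorbing the $\eta$-term and bounding $N$ using $f\in\cCb^{2}$) that the paper's one-line proof leaves implicit.
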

\begin{remark} 
    When $f\in\cCb^{1+\gamma}$, 
    several D-solutions to the RDE $y=a+\int_0^\cdot f(y_s(a))\vd \bx_s$ may exist (Example~2 in~\cite{davie05a}).
    Uniqueness requires $f$ to be $(2+\gamma)$-Hölder continuous with $2+\gamma>p$. 
    Hence, Corollary~\ref{cor:10} shows that a measurable flow exists even 
    when several D-solutions may exist.
\end{remark}
\begin{proof}
According to Proposition~\ref{prop:davie_rough}, $\phi$ is an almost flow. Here $\gamma=1$, so $\phi$ is Lipschitz. 
Then, we conclude the proof in applying Theorem~\ref{thm:nls:1} to $\phi$.
\end{proof}

\section*{Acknowledgments}

The authors are very grateful to Laure Coutin for her numerous valuable remarks and corrections. 
We are also grateful to the CIRM (Marseille, France) for its kind hospitality with the Research-in-Pair program.
Finally, we thank the referees for their careful reading.

\printbibliography

\end{document}